\documentclass[11pt]{amsart}
\usepackage[letterpaper,margin=1in]{geometry}

\usepackage[T1]{fontenc}
\usepackage[utf8]{inputenc}
\usepackage{microtype}

\usepackage{amsmath,amssymb,mathtools}
\usepackage{amsthm}
\usepackage{bbm}
\usepackage{bm}

\usepackage{enumitem}
\usepackage{booktabs}
\usepackage{multirow}
\usepackage{array}
\usepackage{graphicx}
\usepackage{caption}
\usepackage{subcaption}
\usepackage{algorithm}
\usepackage{algpseudocode}
\usepackage{tikz}

\usepackage[authoryear,round]{natbib}
\usepackage{xcolor}
\usepackage{hyperref}
\usepackage[nameinlink,capitalize]{cleveref}

\hypersetup{
    colorlinks=true,
    linkcolor=blue!50!black,
    citecolor=blue!50!black,
    urlcolor=blue!50!black
}

\numberwithin{equation}{section}

\theoremstyle{plain}
\newtheorem{theorem}{Theorem}[section]
\newtheorem{lemma}{Lemma}[section]
\newtheorem{proposition}{Proposition}[section]

\theoremstyle{definition}
\newtheorem{definition}{Definition}[section]
\newtheorem{assumption}{Assumption}[section]
\newtheorem{example}{Example}[section]

\theoremstyle{remark}
\newtheorem{remark}[theorem]{Remark}

\makeatletter
\renewcommand{\paragraph}{%
  \@startsection{paragraph}{4}{\z@}%
  {1.25ex \@plus 0.2ex \@minus 0.2ex}%
  {-0.75em}%
  {\normalfont\normalsize\bfseries}%
}
\makeatother

\title[The BAR-SOT Method]
{The BAR-SOT Method: Long-term Average Cost Control as Stochastic Optimal Self-Transport}

\author[Srinivasan, Turkay, and Honnappa]{
Sharan Srinivasan$^{1}$, Berke M. Turkay$^{2}$, and Harsha Honnappa$^{3}$\\
\NoCaseChange{{\normalfont\small
$^{1}$Elmore School of Electrical and Computer Engineering,
Purdue University, West Lafayette, Indiana 47907\\
$^{2}$Daniels School of Business,
Purdue University, West Lafayette, Indiana 47907\\
$^{3}$Edwardson School of Industrial Engineering,
Purdue University, West Lafayette, Indiana 47907}}
}

\keywords{ergodic control; average-cost Markov decision processes;
basic adjoint relationship; stochastic optimal transport;
Schr\"odinger bridge; entropy regularization; queueing systems}

\begin{document}

\begin{abstract}
    We reformulate average-cost (ergodic) control of a Markov jump process as a finite-horizon stochastic optimal transport (SOT) problem that jointly optimizes over the controlled evolution and the marginal law from which it starts and returns to (i.e., a self-transport). The constraint relating the marginal flow to the controlled generator is the basic adjoint relationship (BAR), so we call the resulting problem BAR-SOT. For any time horizon $T>0$, its optimal value is a constant scaling of the long-run average-cost rate. We give three equivalent formulations (through controlled processes, a Fokker--Planck constraint, and relaxed marginal measures) and show that the optimal dual is a stationary potential plus a term linear in time, with slope equal to the rate. A relative-entropy penalty on the control yields a cost-tilted Schrodinger bridge problem, computable by a Sinkhorn-type iteration when every transition rate is controlled, whose value converges to the unregularized optimum as the penalty vanishes. We develop the theory for finite Markov decision processes and then for general controlled Markov jump processes. A neural parametrization of the dual, trained as a physics-informed neural network (PINN), that encodes Harrison's equivalent-workload formulation (see \citet{harrisonvanmieghem1997}) matches the strong reinforcement-learning baseline of Dai and Gluzman (\citet{daigluzman2022}); numerically conditioning the fit of a sub-dominant transverse correction then improves on both that baseline and the best priority heuristic. On the input-queued switch, a graph-attention parametrization of the dual improves on the strongest matching heuristic we are aware of, with a parameter count that does not grow with the switch size.
\end{abstract}

\maketitle

\section{Introduction}

\subsection{Background and Motivation}

We present a novel characterization of long-term average cost (or `average cost') control of Markov jump processes as a {\it stochastic optimal self-transport} problem. Average-cost problems are typically harder to solve than discounted-cost problems. Unlike the latter, the Bellman operator is not a contraction. The average-cost optimality equation (ACOE) ties the optimal cost rate, the relative value function and the policy together, with each only defined through the others. The existence of a solution strongly depends on the ergodicity of the controlled chain. This is further compounded by the fact that the long-run cost of a policy is implicit, as it is the running cost averaged against the stationary law induced by the optimal policy. Of course, this law can only be derived from a balance equation once the policy is fixed. Consequently, a decision-maker must settle on a policy and an invariant distribution it produces at once. 

Classical methods, such as relative value iteration (RVI) and policy iteration (PI), attempt to solve this through a fixed point \citep{howard1960dynamic,puterman1994markov,arapostathis1993discrete}. For example, RVI re-centers the average cost around a reference state's value. Convergence in this case typically depends on the mixing rate of the controlled Markov chain. The alternative PI method requires solving the Poisson equation, along with a greedy improvement step in the iteration. The convergence rate is again conditioned by the mixing rate of the controlled Markov chain. Alternatively, vanishing discount methods, where the discount rate approaches 1, can only work under strong geometric ergodicity or Foster--Lyapunov conditions. 

An alternate perspective is provided by the convex-analytic/linear programming (LP) reformulations; see \citep{manne1960linear,borkar1988convex,hernandez1996discrete} (in discrete time), \citep{bhatt1996occupation,kurtz1998existence,kurtz2001stationary} (in continuous time). In these settings, the average-cost problem is demonstrated to be equivalent to a minimization over occupation measures on the state--action space, subject to stationarity or balance constraints. In general, the optimum is characterized cleanly only under ergodicity conditions of the same kind. The convex-analytic perspective recasts ergodic control as a single linear program, and requires minimization of the average cost over a single stationary occupation measure on the state--action space, with the latter subject to a stationary basic adjoint relationship (BAR) constraint that the measure must satisfy. In continuous time, \citet{kurtz2001stationary} show that every feasible measure is the occupation measure of an admissible process. Here, we use this equivalence but do not solve the stationary linear program. Instead, we ask for the cheapest controlled Markov evolution that starts at a given marginal law over the state space and returns to it over a fixed, finite horizon of length $T > 0$. This is a {\it stochastic optimal `self-transport'} (SOT) problem, a variant of stochastic optimal transport~\citep{mikami2021stochastic} in which the two endpoint marginals coincide. As it turns out, with a further optimization over this marginal law, {\it the problem has value exactly $T$ times the ergodic rate}. We call this the {\it meta-SOT} problem. For a fixed endpoint law the problem is the self-transport problem, and the meta-SOT problem is the further minimization over the endpoint law. Because the constraint tying the flow to the generator is the basic adjoint relationship, we refer to the formulation, at either level, as BAR-SOT. Thus, what was the ACOE together with its stationarity condition is reformulated as a stochastic optimal transport problem over controlled distribution flows. 

Equivalently, one may view the SOT problem as optimizing over a time-indexed probability flow of probability distributions that begins and ends at {a common marginal law} (i.e., {a loop}), and over the law itself, subject to the BAR condition. This changes the average-cost reformulation in two profound ways. First, the SOT problem itself is handled through its dual, which yields a single value function. The optimal dual function is time-periodic up to an additive constant. In particular, the constant is precisely the meta-SOT value. Thus, the meta-SOT problem is solved through a periodic boundary-value problem (BVP). This periodic BVP distinguishes our formulation from both the convex-analytic methods~\citep{borkar1988convex,kurtz1998existence} and Fokker--Planck methods, which control the law of a diffusion through its Fokker--Planck equation~\citep{annunziato2018fokker,daudin2023optimal}. The Fokker--Planck methods fix the initial law and optimize a terminal or running cost over a finite horizon, whereas here the flow is constrained to return to its initial law, and the law is itself a decision variable. The periodic BVP may be reminiscent of lattice-type renewal-reward processes, in the sense that long-term averages in the latter setting can be viewed as concatenating independent, identical cycles. However, the periodicity in our current setting is in the space of marginal probability flows. Furthermore, lifting the problem to a finite, periodic horizon preserves the self-consistency issue that makes average-cost problems difficult. The key difficulty is the fact that there is no equation that yields the ergodic rate on its own, and the latter enters a Poisson equation that determines the dual value function. This is a nonlinear eigenvalue problem that is non-coercive (i.e., it is determined only up to constants), and a consistent dual value function exists only at the meta-SOT value. 

However, that reframing offers a {\it form} that makes the problem tractable. This leads to the second implication, viz. an iterative resolution of the BVP through entropic regularization, which adds a relative-entropy penalty on the control, measured against a fixed reference kernel, to the SOT objective. On the dual side, it replaces the hard maximization over controls in the Hamiltonian by a log-sum-exp, and the optimal control by a smooth Gibbs kernel that randomizes over the admissible actions rather than selecting one.  The nonlinear eigenvalue problem that determines the dual value function is thereby smoothed. At a fixed endpoint law the regularized problem is an entropic optimal transport, or Schr\"odinger bridge, and when every transition rate is controlled it is computable by a Sinkhorn-type iteration; the periodicity condition then pins the additive constant, i.e.\ the meta-SOT value. The soft Hamiltonian is also the log-sum-exp of risk-sensitive ergodic control, a connection taken up in the literature survey below. The Lagrange multiplier on the relative-entropy regularization is a `temperature' parameter and, as it is annealed to zero, the soft Hamiltonian sharpens back to the `hard' maximization. Entropic regularization does not remove the eigenvalue problem or its self-consistency, which are intrinsic; rather, it realizes the tractable form that the BAR-SOT reformulation affords, and renders the problem computationally amenable.

To the best of our knowledge, this reformulation of the average-cost control problem as a stochastic optimal self-transport problem is novel. Therefore, in the next section we will start with a simple discrete-time finite-state, finite-action average-cost Markov decision process (MDP) problem and show how it can be lifted to the BAR-SOT problem. In this case, the regularized problem is really a Schr\"odinger bridge problem. In \Cref{sec:bar-sot}, we establish our main results. We demonstrate the equivalence of three seemingly different relaxations of the BAR-SOT problem in \Cref{thm:equivalence} and establish the equivalence between the ergodic cost rate and the meta-SOT value in \Cref{thm:ergodic-value}. As noted above, directly solving the meta-SOT problem is difficult, and we explain this in \Cref{sec:optimality-system}, leading to the entropic regularization of \Cref{sec:entropy-reg}. In the next section, we exploit the structure of the dual problem as a periodic BVP, and we also establish annealed consistency and a rate of convergence for the entropy-regularized problem in \Cref{thm:cont-action-consistency} and \Cref{thm:finite-action-consistency}. These main theoretical results are illustrated in \Cref{sec:numerical} on two scheduling problems. On a standard six-class reentrant queueing benchmark (\Cref{sec:heavy-traffic}), the learned dual matches a strong reinforcement-learning baseline and, after a refinement of the fit, improves on both that baseline and the best priority heuristic in heavy traffic. On the input-queued switch (\Cref{sec:switch}), a graph parametrization of the dual improves on the strongest matching heuristic of which we are aware~\citep{lu2025weighteddelay}, with a parameter count that does not grow with the switch size. We end with a discussion and future work in \Cref{sec:conclusion}.

%
%
%

\subsection{Literature Survey}

\paragraph{Convex-analytic and occupation-measure formulations.}
The fact that an average-cost control problem can be recast as a linear program over
occupation measures goes back to \citet{manne1960linear}. The state--action
occupation measure records the long-run fraction of time the controlled process
spends in each state--action pair. The cost is \emph{linear} in this measure, which must be
invariant under the controlled generator,
and the dynamics enter as a single linear constraint. \citet{borkar1988convex} developed
this convex-analytic viewpoint for controlled Markov processes, and it was 
 extended to a broad class of diffusions in \citet{bhatt1996occupation},  which also
established the attainment of optimal stationary controls.
\citet{hernandez1996discrete} gave the systematic LP treatment for
discrete-time chains. The invariance constraint in these formulations is
exactly the basic adjoint relationship we impose. {For deterministic control, \citet{gaitsgory2006linear, finlay2008duality} develop this linear program and its duality, with applications to periodic optimization. Their dual is a stationary Hamilton--Jacobi inequality in the state alone, whereas ours is a time-periodic boundary-value problem (\Cref{sec:periodic-bvp}) and the invariant law is left free.} This work builds on this prior art by adding
the transport structure that emerges only after lifting to a finite, periodic
horizon, which the steady-state LP does not automatically expose. 

\paragraph{Realization of measures as controlled processes.}
In continuous time, the link between the measure LP and genuine controlled
processes is due to \citet{kurtz1998existence,kurtz2001stationary}. They show
that the controlled martingale problem is equivalent to a linear program over
measures annihilating the generator, and that
\emph{every} measure satisfying this constraint is realized as the occupation
measure of an admissible stationary Markov control; their later work extends
this to forward equations and to singular controls. We use this realization in
both directions, passing freely between the process formulation of the SOT
problem and its marginal-measure formulation.

\paragraph{Solvers for the average-cost equation.}
Algorithmically, the average-cost optimality equation is classically treated by
policy iteration and relative value iteration \citep{howard1960dynamic,
puterman1994markov}, surveyed for continuous state spaces by
\citet{arapostathis1993discrete}, with average-reward reinforcement learning as
the data-driven counterpart \citep{mahadevan1996average, abounadi2001learning}.
These methods are constrained by the fact that the relative value function is pinned only up to
an additive constant, just as we see in
\Cref{sec:optimality-system}, and the gain is fixed by a solvability condition. Our
route to computing the ergodic rate differs in that we do not iterate on the steady-state equation itself but rather 
solve the lifted periodic problem. The periodic BVP makes a smoothed Hamiltonian
and a learned dual representation applicable.

\paragraph{Stochastic optimal transport and Schr\"odinger bridges.}
The transport reformulation rests on the theory of stochastic optimal transport, developed by
\citet{mikami2021stochastic}, whose framework gives the equivalence of the
process, Fokker--Planck, and marginal-measure formulations that we exploit. Mikami's SOT generalizes problems including the Schr\"odinger bridge~\citep{leonard2014} and Nelson's stochastic mechanics~\citep{nelson1966derivation,nelson2012review} to a general class of `marginal' constraint problems. We exploit precisely this characterization in our reformulation of the average-cost problem. The
entropic regularization of \Cref{sec:entropy-reg} is the control analog of
entropic optimal transport and the Schr\"odinger bridge problem
\citep{leonard2014, chen2021stochastic}, whose solution is computed by
Sinkhorn / iterative proportional fitting \citep{cuturi2013sinkhorn,
peyre2019computational}. Our equal end-point condition
places the problem squarely in the self-transport (bridge) family, but with a
genuine control cost in place of a fixed reference dynamics, and with the
endpoint law itself an unknown that is optimized. Recent work extends the Schr\"odinger bridge to mean-field (McKean--Vlasov)
dynamics, steering an interacting population between two prescribed
marginals~\citep{backhoff2020mean, rapakoulias2025steering}. The entropic
problem is then solved by a Sinkhorn-type
recursion~\citep{eldesoukey2026generalized} of the same form as the endpoint scaling of \Cref{sec:entropy-reg}. Our problem
differs in two ways. It is a self-transport, and the
common law is itself optimized. The dynamics in our problem do not have any mean-field interactions.  

{\paragraph{Entropy-regularized control and Kullback--Leibler (KL) control.}
Several objects in \Cref{sec:finite-mdp,sec:entropy-reg} are shared with the literature on linearly solvable and entropy-regularized control. Tilting a reference kernel by the Gibbs factor of the cost, the Gibbs form of the optimal randomized control, and the log-sum-exp (soft) Hamiltonian appear in the linearly solvable Markov decision problems of \citet{todorov2009efficient} and the path-integral control of \citet{kappen2005path}, and entropy-regularized Markov decision processes are treated systematically by \citet{neu2017unified} and \citet{geist2019theory}, whose regularization-gap bounds of order $\varepsilon\log|\mathcal U|$ parallel our finite-action consistency rate (\Cref{thm:finite-action-consistency}). KL-weighted optimal control relative to a reference measure is characterized through information projection by \citet{selk2021information}, in the state-independent setting on Banach spaces. The present use differs in terms of what is constrained and what is optimized. The KL penalty acts on the control relative to a reference, subject to the self-transport endpoint constraint on the marginal flow, and the endpoint law is itself optimized. This meta-problem, its identification with the long-run average cost (\Cref{thm:ergodic-value}), and the periodic structure of the optimal dual have no counterpart in that literature.}

\paragraph{Risk-sensitive control.}
The same regularization connects our dual to risk-sensitive control. By the
Gibbs variational principle, a relative-entropy penalty on the control is dual
to an exponential-of-cost criterion, so the soft Hamiltonian is the log-sum-exp
of risk-sensitive ergodic control, with the temperature playing the role of the
risk-sensitivity parameter \citep{fleming1995risk, borkarMeyn2002risk,
arapostathisBorkarGhosh2012}. There, the additive eigenvalue problem of the
average-cost case becomes a multiplicative, principal-eigenvalue problem; the
same passage underlies the smoothing we use.

\paragraph{Density control, drift control, and scheduling.}
Finally, the paper sits alongside work that controls the law of a diffusion
through its Fokker--Planck equation \citep{annunziato2018fokker,
daudin2023optimal} and on singular and drift control of queues and their
diffusion limits \citep{ata2005drift}. 


%
%
%
%

\section{Finite Markov Decision Processes as a Self-Transport Problem}\label{sec:finite-mdp}

Consider a discrete-time Markov decision process with finite state space \(\mathcal{S}\) and finite action sets \(\mathcal U(x)\). At each state \(x\in\mathcal{S}\), action \(u \in \mathcal U(x)\) incurs a one step cost \(c(x,u)\) and determines the distribution of the next state \(P(\cdot\mid x,u)\). For a finite set $A$, let $\mathcal P(A)$ denote the set of probability distributions on $A$.

A deterministic stationary policy is a map \(\mu\) satisfying $\mu(x)\in\mathcal U(x)$. The optimal long-run average cost is
\begin{equation}\label{eq:mdp-ergodic}
    g^{\mathrm{erg}} := \inf_{\mu}\ \limsup_{N\to\infty}\ \frac{1}{N}\sum_{n=0}^{N-1}\mathbb{E}\big[c(X_n,\mu(X_n))\big].
\end{equation}
 Here, the infimum is over deterministic stationary policies, \(X_0\sim\pi_0\), and $X_{n+1}$ has the conditional distribution \(P(\cdot\mid X_n,\mu(X_n))\). 

\subsection{The Ergodic Problem as a Linear Program}
At step $n$, a randomized decision rule
\[
    q_n(\cdot\mid x)\in\mathcal P(\mathcal U(x))
\]
assigns probability $q_n(\cdot\mid x) \in \mathcal P(\mathcal U(x))$ to action $u$ at state $x$, and induces the transition
matrix
\[
    P^{q_n}(x,y)
    =
    \sum_{u\in\mathcal U(x)}
    q_n(u\mid x)P(y\mid x,u).
\]
Let \(\pi_n\in\mathcal P(\mathcal{S})\) be the distribution of $X_n$. The joint probability of occupying state $x$ and choosing action $u$ at step $n$ is
\[
    m_n(x,u)
    :=
    \pi_n(x)q_n(u\mid x),
    \qquad x\in\mathcal{S},\ u\in\mathcal U(x).
\]
The state distribution evolves according to
\[
    \pi_{n+1}(y)
    =
    \sum_{x\in\mathcal{S}}\pi_n(x)P^{q_n}(x,y),
    \qquad y\in\mathcal{S}.
\]

{\begin{assumption}[Unichain]\label{assump:unichain}
Under every deterministic stationary policy, the controlled chain has a single recurrent class.
\end{assumption}
Under \Cref{assump:unichain},} an optimal deterministic stationary policy exists, and \(g^{\mathrm{erg}}\) is independent of the initial law \(\pi_0\) \citep[Chapter~8]{puterman1994markov}. The same value is obtained from the stationary occupation-measure linear program \citep{manne1960linear,hordijkkallenberg1979,borkar1988convex}. Here, \(\bar m(x,u)\) represents the long-run fraction of time spent in state \(x\) using action \(u\):
\begin{equation}\label{eq:occupation-LP}
  \begin{aligned}
    &\min_{\bar m\ge 0}\ \sum_{x\in\mathcal{S}}\sum_{u\in\mathcal U(x)} c(x,u)\,\bar m(x,u)\\
    &\text{subject to}\quad\\
    &\qquad\qquad\sum_{x,u}\bar m(x,u)=1,\quad
    \sum_{u\in\mathcal U(y)}\bar m(y,u)=\sum_{x,u}\bar m(x,u)\,P(y\mid x,u)\ \ \forall y\in\mathcal{S}.
      \end{aligned}
\end{equation}
The optimum of the linear program occurs at a vertex of the occupation polytope, namely a deterministic stationary policy. {Admitting randomized rules \(q_n(\cdot\mid x)\), equivalently optimizing \(\bar m\) directly, leaves the value~\eqref{eq:mdp-ergodic} unchanged, because the optimum sits at such a vertex.}

\subsection{\texorpdfstring{The $N$-Step Self-Transport Problem}{The N-Step Self-Transport Problem}}
Solving \eqref{eq:mdp-ergodic} through a self-transport reformulation leads to an \(N\)-step problem. Instead of optimizing the stationary occupation measure directly, we seek the cheapest controlled flow that, over \(N\) steps, leaves a marginal \(\tilde\pi\) and returns to it, and then optimize over \(\tilde\pi\). A controlled flow is a sequence $\{m_n\}_{n=0}^{N-1}$ of the state--action marginals of the preceding subsection, $m_n(x,u)=\pi_n(x)\,q_n(u\mid x)$, subject to the constraints displayed below. Randomization allows the flow to match the prescribed endpoint marginal, playing a different role here than in the ergodic problem. The controlled flow forms a {loop} in the space of marginal laws (see \Cref{fig:self-transport-loop}). Optimizing over both the endpoint law $\tilde\pi$ and the flow therefore recovers the ergodic rate~\eqref{eq:mdp-ergodic}, for every horizon $N\ge1$.
The \(N\)-step self-transport problem minimizes
\[
    \frac{1}{N}
    \sum_{n=0}^{N-1}
    \sum_{x\in\mathcal{S}}
    \sum_{u\in\mathcal U(x)}
    c(x,u)m_n(x,u)
\]
over state--action flows \(\{m_n\}_{n=0}^{N-1}\). The self-transport objective above is subject, for each \(n=0,\ldots,N-1\), to
\[
    m_n(x,u)\geq 0,
    \qquad x\in\mathcal{S},\ u\in\mathcal U(x),
\]
\[
    \sum_{u\in\mathcal U(x)}m_n(x,u)=\pi_n(x),
    \qquad x\in\mathcal{S},
\]
\[
    \pi_{n+1}(y)
    =
    \sum_{x\in\mathcal{S}}
    \sum_{u\in\mathcal U(x)}
    m_n(x,u)P(y\mid x,u),
    \qquad y\in\mathcal{S},
\]
and the self-transport endpoint condition
\[
    \pi_0=\pi_N=\tilde\pi.
\]

The $N$-step self-transport problem is the finite-state, discrete-time instance of the BAR-SOT formulation of the introduction, and the subsequent minimization over $\tilde\pi$ is the meta-SOT problem.

\begin{figure}[t]
\centering
\includegraphics[width=\linewidth]{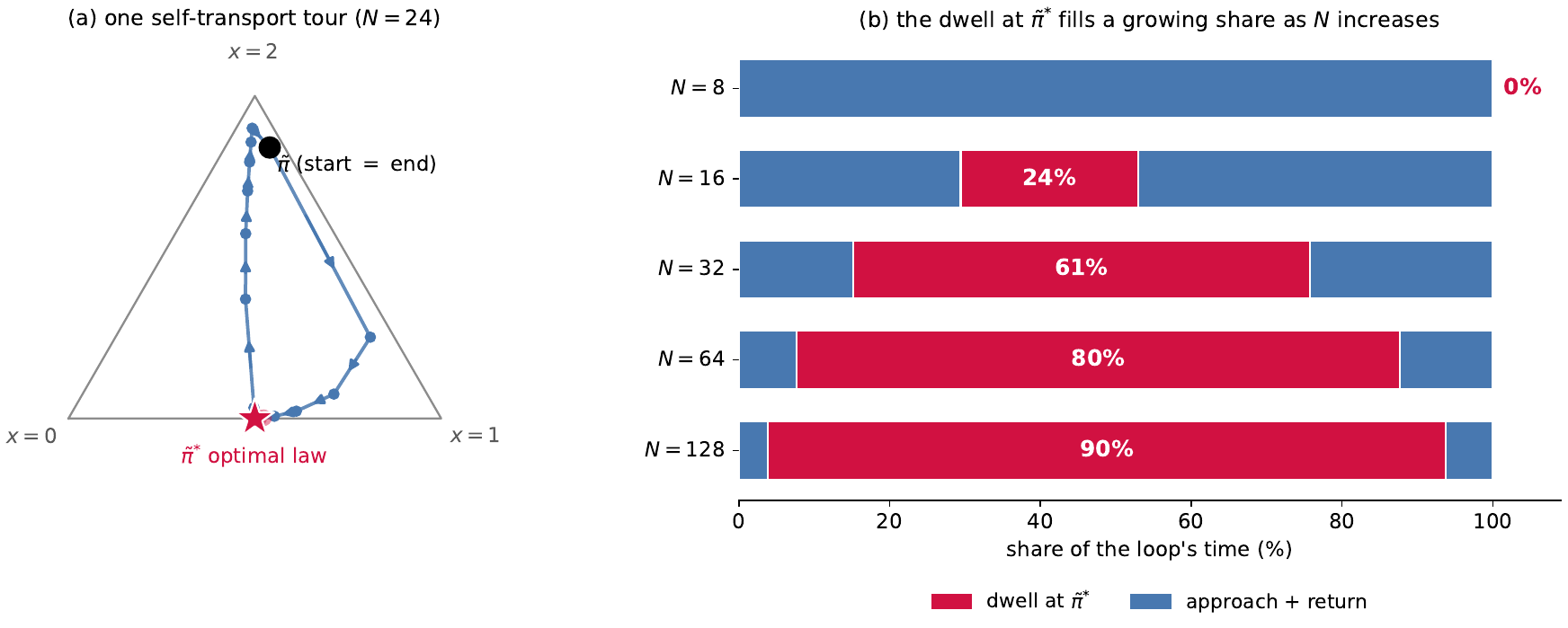}
\caption{{The self-transport {loop}} (running example: a three-state controlled queue; at each state the action is either \emph{serve} or \emph{idle}, with one-step cost $c(x,a)=x+\tfrac32\,\mathbf 1\{a=\mathrm{serve}\}$). The optimal stationary policy is bang-bang with optimal stationary law $\tilde\pi^{*}=(\tfrac12,\tfrac12,0)$. \textbf{(a)}~One optimal $N$-step flow $\{\pi_n\}_{n=0}^{N}$ of the self-transport problem. From the anchor $\tilde\pi$ (black), the marginal flows out to $\tilde\pi^{*}$ (red star), dwells there, and is steered back to close the loop $\pi_N=\pi_0=\tilde\pi$. \textbf{(b)}~The same tour's timeline split into the excursion (approach $+$ return, blue) and the dwell at $\tilde\pi^{*}$ (red), normalized to the horizon, for increasing $N$. The excursion has a fixed length, so the dwell share rises toward $100\%$ and the per-period average cost converges to the ergodic rate $g^{\mathrm{erg}}$ of~\eqref{eq:mdp-ergodic}.}
\label{fig:self-transport-loop}
\end{figure}

\subsection{\texorpdfstring{The Discrete Basic Adjoint Relationship and the Ergodic Value}{The Discrete Basic Adjoint Relationship and the Ergodic Value}}\label{sec:dbar}

{We now identify the value of the $N$-step self-transport problem. We exploit the discrete form of the basic adjoint relationship (BAR), which is a family of linear identities obtained by testing the flow constraints against functions on the state space. }

Specifically, for every function \(f:\mathcal{S}\to\mathbb R\), the transition kernel $P^u$, under action
\(u\in\mathcal U(x)\) acts on \(f\) by
\[
    P^u f(x)
    :=
    \sum_{y\in\mathcal{S}}P(y\mid x,u)f(y).
\]
The one-step increment operator is
\[
    (P^u-I)f(x)
    =
    \sum_{y\in\mathcal{S}}P(y\mid x,u)f(y)-f(x)
    .
\]
The one-step controlled flow identity follows from the flow constraint and the
state--action marginal constraint:
\[
\begin{aligned}
    \langle f,\pi_{n+1}-\pi_n\rangle
    &=
    \sum_{x\in\mathcal{S}}
    \sum_{u\in\mathcal U(x)}
    m_n(x,u)(P^u-I)f(x).
\end{aligned}
\]
Summing the one-step identities over \(n=0,\ldots,N-1\) gives
\[
\begin{aligned}
    \sum_{n=0}^{N-1}
    \sum_{x\in\mathcal{S}}
    \sum_{u\in\mathcal U(x)}
    m_n(x,u)(P^u-I)f(x)
    &=
    \sum_{n=0}^{N-1}
    \langle f,\pi_{n+1}-\pi_n\rangle\\
    &=
    \langle f,\pi_N\rangle-\langle f,\pi_0\rangle
    =
    0,
\end{aligned}
\]
where the last equality uses the endpoint condition \(\pi_0=\pi_N=\tilde\pi\).

Then, the time-averaged state--action marginal,
\[
    \bar m_N(x,u)
    :=
    \frac{1}{N}
    \sum_{n=0}^{N-1}m_n(x,u),
\]
 satisfies the stationary discrete {\it basic adjoint relationship} (BAR) condition
\[
    \sum_{x\in\mathcal{S}}
    \sum_{u\in\mathcal U(x)}
    \bar m_N(x,u)(P^u-I)f(x)
    =
    0.
\]

The $N$-step self-transport objective can now be written as
\[
    \sum_{x\in\mathcal{S}}
    \sum_{u\in\mathcal U(x)}
    c(x,u)\bar m_N(x,u).
\]
Averaging in time therefore maps every feasible $N$-step flow to a feasible point of the occupation-measure linear program~\eqref{eq:occupation-LP}; the stationary discrete BAR is the balance constraint of~\eqref{eq:occupation-LP} tested against every $f$, and the two are equivalent on the finite state space (take $f=\mathbf 1_{\{y\}}$).

{
For $\tilde\pi\in\mathcal P(\mathcal S)$, let $g_N(\tilde\pi)$ denote the optimal value of the $N$-step self-transport problem with endpoint law $\tilde\pi$.

\begin{proposition}[The $N$-step value is the ergodic value]\label{prop:discrete-value}
Let \Cref{assump:unichain} hold. Then, for every $N\ge1$,
\[
\inf_{\tilde\pi\,\in\,\mathcal P(\mathcal S)}\ g_N(\tilde\pi)\ =\ g^{\mathrm{erg}} .
\]
\end{proposition}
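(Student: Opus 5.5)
The plan is to prove the two inequalities separately, using the time-averaging map of \Cref{sec:dbar} for the lower bound and a stationary construction for the upper bound.

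\emph{Lower bound.} Fix $\tilde\pi$ and any feasible $N$-step flow $\{m_n\}_{n=0}^{N-1}$. By the computation preceding the statement, the time average $\bar m_N$ has the following properties:
\begin{itemize}
\item it is nonnegative;
\item it has total mass $\frac1N\sum_n\sum_x\pi_n(x)=1$;
\item it satisfies the stationary discrete BAR for every $f$, and taking $f=\mathbf 1_{\{y\}}$ gives exactly the balance constraint of \eqref{eq:occupation-LP}.
\end{itemize}
The $N$-step objective equals $\sum_{x,u}c(x,u)\bar m_N(x,u)$. Hence every feasible flow yields a feasible point of \eqref{eq:occupation-LP} with the same cost, and the value of the $N$-step problem is at least the LP value. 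Under \Cref{assump:unichain}, the LP value equals $g^{\mathrm{erg}}$ by the cited results of \citet{manne1960linear,hordijkkallenberg1979} and \citet[Chapter~8]{puterman1994markov}. Therefore $g_N(\tilde\pi)\ge g^{\mathrm{erg}}$ for every $\tilde\pi$. If the $N$-step problem at some $\tilde\pi$ is infeasible, then $g_N(\tilde\pi)=+\infty$ and the bound holds trivially.

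\emph{Upper bound.} Under \Cref{assump:unichain} there is an optimal deterministic stationary policy $\mu^*$. Its induced chain has a single recurrent class, so it admits an invariant law $\pi^*$, and
\[
\sum_x c(x,\mu^*(x))\pi^*(x)=g^{\mathrm{erg}},
\]
because for a unichain the Cesàro averages of $\pi_0(P^{\mu^*})^n$ converge to $\pi^*$ for every $\pi_0$. Take $\tilde\pi=\pi^*$ and set $m_n(x,u)=\pi^*(x)\mathbf 1\{u=\mu^*(x)\}$ for all $n$. Invariance gives $\pi_{n+1}=\pi^*P^{\mu^*}=\pi^*=\pi_n$, so the endpoint condition $\pi_0=\pi_N=\pi^*$ holds for every $N\ge1$. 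The cost of this flow is exactly $g^{\mathrm{erg}}$, hence $\inf_{\tilde\pi}g_N(\tilde\pi)\le g_N(\pi^*)\le g^{\mathrm{erg}}$. Combining the two bounds gives equality, and the infimum is in fact attained at $\pi^*$.

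\emph{Main obstacle.} The only nontrivial ingredient is the identification of the LP value with $g^{\mathrm{erg}}$ as defined in \eqref{eq:mdp-ergodic}, that is, with an infimum over deterministic stationary policies. I would take this from the cited literature. If a self-contained argument is needed, the route is as follows. A feasible $\bar m$ is decomposed into its state marginal $\bar\pi$ and a randomized rule $\bar m(x,u)/\bar\pi(x)$, with arbitrary choice where $\bar\pi(x)=0$. The balance constraint then says $\bar\pi$ is invariant for this randomized stationary rule. The randomized average cost is minimized at a vertex of the polytope in \eqref{eq:occupation-LP}, and vertices correspond to deterministic stationary policies whose unique invariant law is supported on their recurrent class. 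This is where unichain is used, to ensure the vertex cost equals that policy's long-run average for every initial law.
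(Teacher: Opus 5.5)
Your proof is correct and follows the paper's argument. For the lower bound, the time average $\bar m_N$ is a feasible point of the linear program with the same cost, and its value is identified with $g^{\mathrm{erg}}$ through the cited literature; for the upper bound, a constant stationary flow attains $g^{\mathrm{erg}}$. The only difference is that you build the constant flow from an optimal deterministic policy and its invariant law, using Ces\`aro convergence to identify its cost, whereas the paper takes $m_n\equiv\bar m$ for an optimizer $\bar m$ of the linear program, so your upper bound does not need the identification of the program's value with $g^{\mathrm{erg}}$.
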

\begin{proof}
Let $\{m_n\}$ be feasible for the $N$-step problem with endpoint law $\tilde\pi$. Its time average $\bar m_N$ is nonnegative, sums to one, and satisfies the stationary discrete BAR, hence the balance constraints of~\eqref{eq:occupation-LP}. The $N$-step objective equals the linear-program objective at $\bar m_N$, so $g_N(\tilde\pi)\ge g^{\mathrm{erg}}$ for every $\tilde\pi$. Conversely, let $\bar m$ be optimal for the linear program and let $\bar\pi$ be its state marginal. The constant flow $m_n\equiv\bar m$ is feasible for the $N$-step problem with $\tilde\pi=\bar\pi$, since the balance constraint returns the same marginal at every step, and its objective equals $g^{\mathrm{erg}}$. Hence $\inf_{\tilde\pi}g_N(\tilde\pi)\le g^{\mathrm{erg}}$. 
\end{proof}
}

{
\begin{remark}[Comparison with the occupation-measure linear program]\label{rem:lp-comparison}
The formulations differ in their decision variables. The linear program has a single stationary measure, with the dynamics compressed into the balance constraint. The self-transport problem has $N$ state--action measures and the endpoint law $\tilde\pi$, imposes the dynamics step by step, and requires stationarity only of the time average, through the matched endpoints. The self-transport formulation yields a novel structural identification and a computational route through the entropic regularization of the next subsection. 
\end{remark}
}

{
\begin{remark}[Horizon independence]\label{rem:horizon-independence}
The equality in \Cref{prop:discrete-value} holds for every horizon, including $N=1$; it is an identity, not an asymptotic. In the classical problem~\eqref{eq:mdp-ergodic}, the horizon must grow so that the endpoint contribution $\langle f,\pi_N-\pi_0\rangle/N$ of the telescoped flow identity vanishes. The loop condition cancels this term exactly, at every $N$. At $N=1$ the meta-SOT problem coincides with the linear program. Observe that the constraints fix the state marginal of $m_0$ and its pushforward to the common value $\tilde\pi$, and optimizing over $\tilde\pi$ leaves exactly the balance constraint. 
\end{remark}
}

\subsection{Entropic Regularization and Schr\"odinger Bridge Form}
Now, we introduce an entropic regularization of the self-transport problem that facilitates computation. Fix a reference control policy \(r(u\mid x)\in\mathcal P(\mathcal U(x))\) and $\varepsilon > 0$. The regularized \(N\)-step objective becomes
\[
    \frac{1}{N}
    \sum_{n=0}^{N-1}
    \left[
    \sum_{x\in\mathcal{S}}
    \sum_{u\in\mathcal U(x)}
    c(x,u)m_n(x,u)
    +
    \varepsilon
    \sum_{x\in\mathcal{S}}
    \sum_{u\in\mathcal U(x)}
    m_n(x,u)
    \log
    \frac{m_n(x,u)}
    {\pi_n(x)r(u\mid x)}
    \right],
\]
subject to the same controlled flow constraints and endpoint condition $\pi_0=\pi_N=\tilde\pi$. Since \(m_n(x,u)=\pi_n(x)q_n(\cdot\mid x)\), the entropy term is
\[
    \sum_{x\in\mathcal{S}}\pi_n(x)
    D_{\mathrm{KL}}
    \bigl(q_n(\cdot\mid x)\Vert r(u\mid x)\bigr).
\]
To exhibit the Schr\"odinger-bridge structure, absorb the running cost into the reference. Let $R$ be the law of the controlled path started from \(\tilde\pi\) and evolving under the reference rule $r(\cdot\mid x)$, with one-step kernel $\sum_u r(u\mid x)P(\cdot\mid x,u)$, and define the \emph{cost-tilted} reference by the Gibbs reweighting
\begin{equation}\label{eq:cost-tilted-reference}
    \frac{dR^{c}}{dR}(x_0,u_0,\dots,x_{N})\ \propto\ \exp\!\Big(-\tfrac1\varepsilon\textstyle\sum_{n=0}^{N-1}c(x_n,u_n)\Big).
\end{equation}
Then the regularized objective equals $\tfrac{\varepsilon}{N}\,D_{\mathrm{KL}}(P^{q}\Vert R^{c})-\tfrac{\varepsilon}{N}\log Z_N$, where $P^{q}$ is the state--action path law of the controlled flow started at $\tilde\pi$ under the decision rules $q=(q_n)$, and $Z_N:=\mathbb E_R\big[e^{-\frac{1}{\varepsilon}\sum_{n}c(X_n,U_n)}\big]$ is the normalizing constant of~\eqref{eq:cost-tilted-reference}. Observe that the transition factors given the actions are common to $P^{q}$ and $R^{c}$ and cancel in the relative entropy, which is why the path-level quantity reduces to the accumulated control penalty. For fixed \(\tilde\pi\), dropping the constant and the horizon normalization, the regularized self-transport problem is exactly the Schr\"odinger bridge for the tilted reference,
\begin{equation}\label{eq:finite-SB}
    \min_{q:\,\pi_0=\pi_N=\tilde\pi}\ D_{\mathrm{KL}}\big(P^{q}\,\Vert\,R^{c}\big),
\end{equation}
the cheapest path law, in relative entropy to $R^{c}$, that leaves $\tilde\pi$ and returns to it. Posed on path space, the Schr\"odinger bridge problem ranges over all path laws with endpoint marginals $\tilde\pi$, whereas \eqref{eq:finite-SB} ranges only over the decision rules $q$. The restriction does not change the value, since the bridge of a Markov reference is Markov~\citep{leonard2014}. Locally, the cost tilts the reference control to $r^{\varepsilon}(u\mid x)\propto r(u\mid x)\,e^{-c(x,u)/\varepsilon}$, and \eqref{eq:finite-SB} is of the form addressed by Sinkhorn/iterative proportional fitting; the scope of that identification is given in \Cref{prop:ipfp-convergence} and \Cref{rem:ipfp-scope}. Recall that the classical Schr\"odinger problem matches endpoint marginals for a fixed reference dynamics with \emph{no} running cost, whereas here the reference is reweighted by $e^{-c/\varepsilon}$, the Gibbs factor of the cost. Hence, we identify this as a \emph{cost-tilted} bridge. It is the discrete-time, finite-state counterpart of the continuous-time entropic regularization of \Cref{sec:entropy-reg}.

A final note on controlled Markov jump processes on finite state/action spaces. Let $\mathcal G^u$ be the generator of such a process with bounded total jump rate. Uniformizing {the process} at any rate $\Lambda$ no smaller than the total jump rate gives the one-step kernel $P^u = I + \Lambda^{-1}\mathcal{G}^u$ and the corresponding one-step cost \(c/\Lambda\), which embeds it as a discrete-time MDP on the same state space; here $P^u-I = \Lambda^{-1}\mathcal{G}^u$, so the increment operator of the finite MDP is exactly the continuous-time generator up to the time scale $\Lambda$. The discrete BAR and self-transport endpoint condition carry over verbatim with $\mathcal{G}^u$ in place of $P^u-I$.

In general, of course, the total intensity is state-dependent and need not admit a finite bound, so no uniformizing rate exists. In these circumstances, the continuous-time problem will not reduce to the finite MDP. In \Cref{sec:bar-sot}, therefore, we develop BAR-SOT in continuous time, on a general state space, and without truncation, replacing boundedness with natural Lyapunov conditions.


\section{Long-term Average Cost Control as a Stochastic Optimal Transport Problem}\label{sec:bar-sot}
We formulate the control problem for a general controlled Markov process. Let the state space $\mathcal{S}$ be a locally compact, separable metric space{; for the duality and verification results of this section and the next, we take $\mathcal S$ countable with the discrete topology, which is the setting of every example in this paper. The duality and verification results extend to general $\mathcal S$ under additional regularity, such as petiteness of compact sets and continuity of $V$. We leave this extension to general $\mathcal S$ to a future work.} Let the \emph{action set} $\mathcal{U}$ be a compact metric space (more generally, a state-dependent compact $\mathcal{U}(x)$, as in \Cref{sec:finite-mdp}, and we define $\mathcal{U} := \cup_{x\in\mathcal{S}}\mathcal{U}(x)$). {A \emph{Markov control} is a measurable $\mu:[0,1]\times\mathcal{S}\to\mathcal{U}$, with $\mu(t,x)$ the action taken at time $t$ in state $x$; it is \emph{stationary} if it does not depend on $t$, and we then write $\mu(x)$. Note that, while stationary controls are the natural class for the ergodic problem~\eqref{eq:ergodic-cost}, the finite-horizon self-transport formulations below require the time-dependent class, since a flow that leaves $\tilde\pi$ and returns to it is generically driven by a time-varying control.} The controlled dynamics are given by a family of generators $\{\mathcal{G}^u\}_{u\in\mathcal{U}}$ acting on a common core $\mathcal{D}\subseteq C_b(\mathcal{S})${, where $C_b(\mathcal{S})$ denotes the space of bounded continuous real-valued functions on $\mathcal{S}$}. A process--control pair $(Q,\mu)$ follows the dynamics if it solves the following martingale problem for every $f\in\mathcal{D}$, 
\begin{equation}\label{eq:mart-problem}
    f(Q(t))-f(Q(0))-\int_0^t \mathcal{G}^{\mu(s,Q(s))}f(Q(s))\,ds \quad\text{is a martingale.}
\end{equation}
This is the setting of the controlled martingale problem of \citet{kurtz1998existence,kurtz2001stationary}. In the finite setting of \Cref{sec:finite-mdp}, a measure annihilating the increment operator is the occupation measure of a stationary control by an elementary linear-programming argument (see the proof of \Cref{prop:discrete-value} and \Cref{rem:lp-comparison}). Beyond a finite state space this realization is no longer elementary. Producing a genuine process--control pair whose occupation measure is a prescribed annihilating measure is supplied by a forward-equation theorem of \citet{kurtz2001stationary} (quoted as \Cref{prop:KS-superposition}), which we invoke in the proof of \Cref{thm:equivalence}.

We take $\mathcal{G}^u$ to be a controlled jump generator,
\begin{equation}\label{eq:generator}
    \mathcal{G}^{u}f(x)=\int_{\mathcal{S}}\big[f(y)-f(x)\big]\,q(x,dy;u),
\end{equation}
where $q(x,\cdot\,;u)$ is a controlled jump-rate kernel. Our running example of a multiclass queue is the nearest-neighbor case with reflection at the boundary, but \eqref{eq:generator} equally covers controlled arrivals, routing and transfers between classes, batch transitions, and abandonment; a diffusion part $b^u\!\cdot\!\nabla f+\tfrac12 a^u\!:\!\nabla^2 f$ may be added without altering any construction in this section, since only the generator enters them.

The running cost is a measurable function $c:[0,1]\times\mathcal{S}\times\mathcal{U}\to[0,\infty)$, for which we assume the following.
\begin{assumption}\label{assump:cost}
    For each $(t,x)$, the map $u\mapsto c(t,x,u)$ is lower semicontinuous, and convex whenever $\mathcal{U}$ is convex.
\end{assumption}
As before, the object of study is the long-run average-cost control problem
\begin{equation}\label{eq:ergodic-cost}
    g^{\mathrm{erg}} := \inf_{\mu}\ \limsup_{T\to\infty}\frac{1}{T}\,\mathbb{E}\!\left[\int_0^T c\big(s,Q(s),\mu(Q(s))\big)\,ds\right],
\end{equation}
the infimum taken over admissible stationary control maps $\mu$. Under an optimal $\mu$ the process reaches equilibrium as $T\to\infty$: its marginal law converges to the invariant law $\tilde\pi_\mu$ of $\mathcal{G}^\mu$ and is thereafter constant in time. Equivalently, the controlled dynamics leave $\tilde\pi_\mu$ unchanged, a flow started at $\tilde\pi_\mu$ returning to it over any finite time horizon.

For the long-run average cost to be well defined, we impose a stability condition on the controlled generator and require the cost not to outgrow the stability margin; we accordingly restrict the admissible controls to those that stabilize $Q$ through a common Lyapunov function.
\begin{assumption}\label{assump:stability}
    There exist $V:\mathcal{S}\to[0,\infty)$ with relatively compact sublevel sets, $W:\mathcal{S}\to[1,\infty)$, a compact set $C\subset\mathcal{S}$, and a constant $b<\infty$ such that every admissible $\mu$ generates an irreducible (Harris) process satisfying the Foster--Lyapunov drift
    \begin{equation}\label{eq:foster-lyapunov}
        \mathcal{G}^{\mu}V(x)\le -W(x)+b\,\mathbf{1}_{C}(x),\qquad x\in\mathcal{S},
    \end{equation}
    together with the cost bound $0\le c(t,x,u)\le W(x)$ for all $(t,x,u)$.
\end{assumption}

\begin{example}[Multiclass reflected queue]\label{ex:queue}
 Take $\mathcal{S}=\mathbb{Z}^k_+$. Class-$i$ jobs arrive at rate $\lambda_i$ and are served at a controlled rate $\mu_i(x)$, giving the reflected queue-length process
\begin{equation}\label{eq:queue process}
    Q_i(t):=X_i(t)-\Big(\inf_{0\le s\le t}X_i(s)\Big)\wedge 0,\qquad
    X_i(t):=Q_i(0)+N_i^+(t\lambda_i)-N_i^-\!\Big(\int_0^t\mu_i(Q(s))\,ds\Big),
\end{equation}
$i\in[k]$, where the $N_i^\pm$ are independent unit-rate Poisson processes and $Q(0)\in\mathbb{Z}^k_+$ is the (possibly random) initial state. The generator \eqref{eq:generator} specializes to the nearest-neighbor kernel $q(x,\cdot\,;u)=\sum_i\lambda_i\,\delta_{x+e_i}+\sum_i u_i\,\delta_{x-e_i}\mathbf{1}_{x_i>0}$, i.e.
\begin{equation}\label{eq:queue-generator}
    \mathcal{G}^{\mu}f(x)=\sum_{i=1}^k\Big\{\lambda_i\big[f(x+e_i)-f(x)\big]+\mu_i(x)\big[f(x-e_i)-f(x)\big]\mathbf{1}_{x_i>0}\Big\}.
\end{equation}
With the forward/backward differences $f_+(x):=\big(f(x+e_i)-f(x)\big)_{i\in[k]}$ and $f_-(x):=\big(f(x-e_i)-f(x)\big)_{i\in[k]}${, with entries written $f_\pm^i$,} this reads $\mathcal{G}^{\mu}f(x)=\langle\lambda,f_+(x)\rangle+\sum_{i=1}^k\mu_i(x)\,[f_-(x)]_i\,\mathbf{1}_{x_i>0}$. \emph{Scheduling} is the finite action set $\mathcal{U}=\{\mu_i e_i\}_{i\in[k]}$, where serving class $i$ devotes rate $\mu_i$ to it and nothing to the others,  {for which the drift of \Cref{assump:stability} holds under the load condition $\rho:=\sum_{i\in[k]}\lambda_i/\mu_i<1$ for the stationary maps that serve a class attaining $\max_i\mu_ix_i$, with $V(x)=\frac{k}{(1-\rho)\mu_{\min}}\sum_ix_i^2$ and $W(x)=1+\sum_ix_i$. Indeed, for such a map, $\sum_j\lambda_jx_j\le\rho\max_i\mu_ix_i$ and $\max_i\mu_ix_i\ge k^{-1}\mu_{\min}\sum_ix_i$, and the drift follows outside a finite set.} Throughout, we take the action set to be the convex hull $\mathcal{U}=\mathrm{conv}\{\mu_ie_i\}_{i\in[k]}$, whose vertices are the pure schedules and whose remaining points are randomized schedules. 
\end{example}

\subsection{Process Formulation of Self-Transport Problem}
We now present a reformulation of the ergodic control problem as a self-transport problem. First, define an admissible class of stochastic processes as follows:
\begin{definition}
    We say that the process--control pair $(Q,\mu)$ is in the admissible class $\mathcal{A}$ if $\mu$ is a {Markov control}, the initial law $\mathcal{L}(Q(0))$ is prescribed, $(Q,\mu)$ solves the controlled martingale problem~\eqref{eq:mart-problem} on $[0,1]${, and has finite energy, $\mathbb{E}\int_0^1 V(Q(t))\,dt<\infty$, with $V$ the Lyapunov function of \Cref{assump:stability}}.
\end{definition}

 Note that the Foster--Lyapunov drift of \Cref{assump:stability} plays no role for admissibility over a finite horizon. However, it remains the defining condition for the stationary class of the ergodic problem~\eqref{eq:ergodic-cost}. For the queue of \Cref{ex:queue} this is the reflected process~\eqref{eq:queue process} with a control $\mu(t,\cdot)$ that is Lipschitz for each $t$, of finite variation.

\begin{remark}[{Queueing Revisited}]
    We can think of $\mu$ as a control on the rate of the decreasing process $N^-$, which is interpreted as the server processing a task. Indeed, when $\mu$ takes values in the discrete set $\mathcal{U} := \{\mu_ie_i\}_{i\in[k]}$, only one of the Poisson processes $N_i^-$ has a non-zero rate $\mu_i$ at any given time. This would then correspond to assigning a task to that server. In this way, $\mu$ plays the role of a control function controlling the intensity of the downward jump process.
\end{remark}

This motivates us to define the following self-transport problem for a fixed $\tilde\pi\in\mathcal P(\mathcal S)$ on a finite horizon: 
\begin{equation}
    \label{eq:process formulation cost}
    G(\tilde{\pi}) := \inf_{(Q,\mu)\in\mathcal{A}}\mathbb{E}\left[\int_0^1c(s,Q(s),\mu(s,Q(s)))\,ds\right],
\end{equation}
such that $\mathcal{L}(Q(0)) = \mathcal{L}(Q(1)) = \tilde{\pi}$.  This is a stochastic self-transport problem. To solve the scheduling problem, we still have to infimize over endpoint laws and take the ergodic limit. \Cref{thm:ergodic-value} shows that the finite-horizon problem has the same value as the ergodic problem when the cost does not have time dependence.

\subsection{Weak Basic Adjoint Relationship (BAR) Formulation of Self-Transport Problem} 
We define another SOT problem related to the one stated above. However, this formulation does not use stochastic processes to optimize over. We consider a flow of probability measures $\{\rho_t\}_{0\leq t \leq 1}$ on the state space $\mathcal{S}$ that forms a loop, i.e., $\rho_0 = \rho_1 = \tilde{\pi}$, and then look at an admissible class of control functions for a given flow and optimize over both admissible controls and the flows of probability measures. Throughout, $UC_b^1$ denotes the class of functions $f:[0,1]\times\mathcal{S}\to\mathbb{R}$ such that $f$ and the time derivative $\partial_tf$ are bounded and uniformly continuous on $[0,1]\times\mathcal{S}$; on a countable state space this reduces to boundedness and $C^1$ time dependence, uniformly over states. For a horizon $[0,T]$ the same class is understood with $[0,1]$ replaced by $[0,T]$.
\begin{definition}
    Given a flow $\{\rho_t\}_{0\leq t \leq 1}\subset\mathcal{P}(\mathcal{S})$ of probability measures which form a loop, i.e., $\rho_0= \rho_1=\tilde{\pi}$, and a Markov control $\mu:[0,1]\times\mathcal{S}\to\mathcal{U}$, we say that $\mu$ is admissible for the flow $\{\rho_t\}_{0\leq t\leq 1}$ if the flow has finite energy, $\int_0^1\langle 1+V,\rho_t\rangle\,dt<\infty$, and, for all $t\in[0,1]$ and all $f\in UC_b^1$, the weak BAR identity holds:
    \begin{equation}
        \label{eq:weak FP}
        \int_{\mathcal{S}}f(t,x)\,\rho_t(dx)-\int_{\mathcal{S}}f(0,x)\,\tilde{\pi}(dx) = \int_0^t\!\int_{\mathcal{S}}(\partial_sf(s,x)+\mathcal{G}^{\mu(s,x)} f(s,x))\,\rho_s(dx)\,ds.
    \end{equation}
    $\mathcal{G}^\mu$ is given in equation \eqref{eq:generator}. 
\end{definition}

\begin{remark}
    Identity \eqref{eq:weak FP} is the weak (Kolmogorov) forward equation for the flow, called the Fokker--Planck constraint in the stochastic optimal transport literature; we refer to it as the weak BAR identity and to \eqref{eq:SOT weak FP} as the weak BAR formulation; at $t=1$, since $\rho_1=\tilde\pi$, it reduces to the loop (BAR) constraint $\int_{\mathcal{S}}(f(1,x)-f(0,x))\,\tilde{\pi}(dx)=\int_0^1\!\int_{\mathcal{S}}(\partial_sf+\mathcal{G}^\mu f)\,\rho_s(dx)\,ds$.
\end{remark}

We are now ready to define the SOT problem,
\begin{equation}
    \label{eq:SOT weak FP}
    \tilde{G}(\tilde{\pi}) := \inf_{\{\rho_t\},\mu}\int_0^1\!\int_{\mathcal{S}}c(t,x,\mu(t,x))\,\rho_t(dx)\,dt,
\end{equation}
where the infimum is taken over all flows of measures which form a loop with $\rho_0=\rho_1=\tilde{\pi}$, and admissible maps $\mu$ for each flow.

\subsection{Relaxed Marginal Measure Formulation of Self-Transport Problem}
We define yet another SOT problem using marginal measures. This formulation enjoys the property that the controls are randomized in addition to the state. 
\begin{definition}\label{def:relaxed-admissible}
    We say that a probability measure $\pi\in\mathcal{P}([0,1]\times\mathcal{S}\times\mathcal{U})$ is in an admissible set $\mathcal{M}$ if the following conditions hold:
    \begin{enumerate}
        \item The measure disintegrates in time against Lebesgue measure on $[0,1]$, i.e., $\pi(dt,dx,du) = \pi_t(dx,du)\,dt$.
        \item The state marginal $\pi_{1,t}(dx):= \int_{\mathcal{U}}\pi_t(dx,du)$ admits a weakly continuous version $t\mapsto\pi_{1,t}$, with $\pi_{1,0} = \pi_{1,1} = \tilde{\pi}\in\mathcal{P}(\mathcal{S})$. We write the disintegration $\pi_t(dx,du)=q_t(du\mid x)\,\pi_{1,t}(dx)$, where $q_t(\cdot\mid x)\in\mathcal{P}(\mathcal{U})$ is the \emph{randomized control kernel}.
        \item The measure satisfies the BAR relationship: For all $f\in UC^1_b$,
        \begin{equation}
            \label{eq:BAR constraint}
            \int_{\mathcal{S}}(f(1,x)-f(0,x))\,\tilde{\pi}(dx) = \int_{[0,1]\times\mathcal{S}\times\mathcal{U}}\{\partial_tf(t,x) + \mathcal{G}^uf(t,x)\}\,\pi(dt,dx,du).
        \end{equation}
        \item The measure has finite energy, $\int_{[0,1]\times\mathcal{S}\times\mathcal{U}}(1+V(x))\,\pi(dt,dx,du)<\infty$, with $V$ the Lyapunov function of \Cref{assump:stability}. This normalization makes the dual pairing below well defined; the stationary competitors used in \Cref{thm:ansatz} satisfy it.
    \end{enumerate}
\end{definition}
Here~\eqref{eq:BAR constraint} is the continuous-time counterpart of the discrete-time BAR constraint of \Cref{sec:dbar}, with $\mathcal{G}^u$ in the role of $P^u-I$ and the relaxed marginal measure $\pi$ in the role of the time-averaged occupation $\bar m_N$.

The self-transport problem is:
\begin{equation}
    \label{eq:relaxed marginal measures SOT}
    g(\tilde{\pi}) := \inf_{\pi\in\mathcal{M}}\int_{[0,1]\times\mathcal{S}\times\mathcal{U}}c(t,x,u)\,\pi(dt,dx,du).
\end{equation}

We use this formulation to construct the dual problem in the sense of \citet[Theorem 2.4]{mikami2021stochastic}. To this end, consider the Lagrangian
\begin{multline}\label{eq:lagrangian}
   L(\pi,f) := \int_{[0,1]\times\mathcal{S}\times\mathcal{U}} \bigg[c(t,x,u)-\big\{\partial_tf(t,x)+\mathcal{G}^uf(t,x)\big\}\bigg]\pi(dt,dx,du) \\+ \int_{\mathcal{S}}(f(1,x)-f(0,x))\,\tilde\pi(dx).
\end{multline}
From weak duality, we have
\begin{equation}
    \label{eq:weak duality}
    g(\tilde{\pi}) = \inf_{\pi}\sup_fL(\pi,f) \geq \sup_f\inf_\pi L(\pi,f).
\end{equation}
Since $\pi$ is a probability measure, the first term in \eqref{eq:lagrangian} must be nonnegative; otherwise, the infimum over $\pi$ can drive the value to be unbounded. Thus, for all $(t,x,u)\in [0,1]\times\mathcal{S}\times\mathcal{U}$,
\begin{equation*}
    -c(t,x,u)+\partial_tf(t,x)+\mathcal{G}^uf(t,x) \leq 0.
\end{equation*}
Since this holds for all $u\in\mathcal{U}$, it holds for the supremum as well:
\begin{equation}\label{eq:hjb-inequality}
    \partial_tf(t,x) + \mathcal{H}f(t,x)\leq 0,
\end{equation}
where the Hamiltonian is
\begin{equation}
    \label{eq:Hamiltonian}
    \mathcal{H}f(t,x):= \sup_{u\in\mathcal{U}}\{\mathcal{G}^uf(t,x)-c(t,x,u)\}.
\end{equation}
For the queue of \Cref{ex:queue}, $\mathcal{G}^uf=\langle\lambda,f_+\rangle+\sum_{i}u_if_-^i\,\mathbf 1_{x_i>0}$, so $\mathcal{H}f=\langle\lambda,f_+\rangle+H(t,x;f_-)$ with the control Hamiltonian $H(t,x;z):=\sup_{u\in\mathcal{U}}\{\langle z,u\rangle-c(t,x,u)\}$.


\begin{example}[Queue scheduling and singular structure]\label{ex:singular}
Consider the $k=2$ case of \Cref{ex:queue}, with the two pure schedules $(\mu_1,0)$ and $(0,\mu_2)$, corresponding to the server devoting its effort to class $1$ at rate $\mu_1$ or to class $2$ at rate $\mu_2$; assume the cost has no control dependence, and let the load condition of \Cref{ex:queue} hold. The service part of the generator is linear in $u$, so the control Hamiltonian reduces to
\begin{equation}\label{eq:singular-two-action-H}
    H(t,x;f_-)=\max\big\{\mu_1 f_-^1(t,x)\,\mathbf 1_{x_1>0},\ \mu_2 f_-^2(t,x)\,\mathbf 1_{x_2>0}\big\}-c(t,x).
\end{equation}
The comparison between the two service directions is the switching function
\begin{equation}\label{eq:singular-switching-function}
    \Delta_f(t,x):=\mu_1 f_-^1(t,x)-\mu_2 f_-^2(t,x),
\end{equation}
and the maximizing control is bang-bang, $u^*(t,x)=(\mu_1,0)$ when $\Delta_f(t,x)>0$ and $u^*(t,x)=(0,\mu_2)$ when $\Delta_f(t,x)<0$. On the switching set
\begin{equation}\label{eq:singular-set}
    \Sigma:=\{(t,x):\ x_1\wedge x_2>0,\ \Delta_f(t,x)=0\},
\end{equation}
the Hamiltonian is flat between the two actions and does not determine a unique decision, and any deterministic tie-break is external to it. Let $\pi\in\mathcal M$, with randomized control kernel $q_t(du\mid x)$, and let $f$ satisfy~\eqref{eq:hjb-inequality}, with $\int c\,d\pi=\int_{\mathcal S}(f(1,x)-f(0,x))\,\tilde\pi(dx)$, so that the pair has no duality gap in~\eqref{eq:weak duality}. The integrand of the Lagrangian~\eqref{eq:lagrangian} is nonnegative by~\eqref{eq:hjb-inequality} and, by the BAR identity~\eqref{eq:BAR constraint}, integrates to zero against $\pi$, so $q_t(\cdot\mid x)$ is carried by the maximizing actions of $u\mapsto\mathcal G^uf(t,x)-c(t,x,u)$ for $\pi$-almost every $(t,x)$. Away from $\Sigma$, the maximizing action is unique, and $q_t(\cdot\mid x)$ is the point mass at $u^*(t,x)$. When $\pi$ places no mass on $\Sigma$, the deterministic rule $u^*$ is therefore the optimal control. Mass of $\pi$ on $\Sigma$ is precisely the case in which the relaxed formulation is needed. On $\Sigma$, $q_t(\cdot\mid x)$ may split mass, and its barycenter $\bar u(t,x):=\int_{\mathcal U}u\,q_t(du\mid x)$ interpolates the two service directions.
\end{example}

\subsection{Equivalence of the Three Formulations}
We now show that the values of the three SOT problems are the same, which rests on the following structural hypothesis: the control enters the dynamics affinely, so that randomizing the action and then averaging it leaves the generator unchanged.
\begin{assumption}\label{assump:affine}
The action set $\mathcal{U}$ is convex, and for every $f\in\mathcal{D}$ and $x\in\mathcal{S}$ the map $u\mapsto\mathcal{G}^uf(x)$ is affine. Consequently, for any control kernel $q(du\mid x)$ on $\mathcal{U}$ with barycenter $\bar u(x):=\int_{\mathcal{U}}u\,q(du\mid x)\in\mathcal{U}$,
\begin{equation}\label{eq:affine-average}
    \int_{\mathcal{U}}\mathcal{G}^uf(x)\,q(du\mid x)=\mathcal{G}^{\bar u(x)}f(x).
\end{equation}
\end{assumption}
Rate controls satisfy this condition, and so do scheduling controls under the convex-hull convention of \Cref{ex:queue}. For instance, in \Cref{ex:queue}, $\mathcal{G}^uf=\langle\lambda,f_+\rangle+\sum_{i}u_if_-^i\,\mathbf 1_{x_i>0}$ is linear in $u$, so a randomized service rule has the generator of a point of $\mathcal{U}=\mathrm{conv}\{\mu_ie_i\}_{i\in[k]}$, namely its barycenter. When the cost does not depend on the control, the maximand of the Hamiltonian~\eqref{eq:Hamiltonian} is linear in $u$, so a maximizer can be taken at a vertex, which is the bang-bang policy analyzed in \Cref{ex:singular}.

The realization step in the proof below rests on a forward-equation theorem of \citet{kurtz2001stationary}. We restate it in the form used here, specialized to the case of no singular component and with the time dependence carried by the flow. Recall the core $\mathcal{D}\subseteq C_b(\mathcal{S})$ of the martingale problem~\eqref{eq:mart-problem}.
\begin{proposition}[{\citealp[Corollary~1.12]{kurtz2001stationary}}]\label{prop:KS-superposition}
Let $\{\nu_t\}_{t\in[0,1]}\subset\mathcal{P}(\mathcal{S})$ be a measurable flow, let $q_t(du\mid x)$ be a jointly measurable control kernel, and set $\hat{\mathcal{A}}_tf(x):=\int_{\mathcal{U}}\mathcal{G}^uf(x)\,q_t(du\mid x)$ for $f\in\mathcal{D}$. Assume the following.
\begin{enumerate}
\item[(i)] There exist $\psi:\mathcal{S}\times\mathcal{U}\to[1,\infty)$ and constants $a_f<\infty$ such that $|\mathcal{G}^uf(x)|\le a_f\,\psi(x,u)$ for every $f\in\mathcal{D}$, and
$\int_0^1\!\int_{\mathcal{S}\times\mathcal{U}}\psi(x,u)\,q_t(du\mid x)\,\nu_t(dx)\,dt<\infty$.
\item[(ii)] $\mathcal{D}$ contains a countable subfamily that contains the constants, is closed under multiplication, and separates points, and each $\mathcal{G}^u$ is a pre-generator.
\item[(iii)] For every $f\in\mathcal{D}$ and $t\in[0,1]$,
\[
\langle f,\nu_t\rangle=\langle f,\nu_0\rangle+\int_0^t\langle\hat{\mathcal{A}}_sf,\nu_s\rangle\,ds .
\]
\end{enumerate}
Then there exist a process $X$ and a filtration $\{\mathcal{F}_t\}$ such that $f(X(t))-f(X(0))-\int_0^t\hat{\mathcal{A}}_sf(X(s))\,ds$ is an $\{\mathcal{F}_t\}$-martingale for every $f\in\mathcal{D}$, and $\mathcal{L}(X(t))=\nu_t$ for every $t\in[0,1]$.
\end{proposition}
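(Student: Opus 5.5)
The plan is to obtain the statement as a specialization of the forward-equation theory of \citet{kurtz2001stationary} instead of proving it from scratch. The time dependence is absorbed into the state, so that their time-homogeneous result applies. Concretely, I would set $E:=[0,1]\times\mathcal{S}$ and $\hat\nu(dt,dx):=\nu_t(dx)\,dt$. I would define the space--time operator $\mathbf{A}(\gamma f)(t,x,u):=\gamma'(t)f(x)+\gamma(t)\mathcal{G}^uf(x)$ on the algebra generated by products $\gamma f$, with $\gamma\in C^1[0,1]$ and $f\in\mathcal{D}$. Its relaxed version is obtained by integrating against the kernel $q_t(du\mid x)$, which yields $\partial_t+\hat{\mathcal A}_t$.

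The first step is to check the hypotheses of their Theorem~1.7/Corollary~1.12 for $\mathbf{A}$.
\begin{enumerate}
\item[(a)] The growth bound (i) lifts directly, with the weight $\psi(x,u)+1$ controlling $\mathbf{A}(\gamma f)$ because $\gamma$ and $\gamma'$ are bounded. The integrability of $\psi$ against $q_t\,\nu_t\,dt$ is exactly the hypothesis.
\item[(b)] The countable separating algebra in (ii) is lifted by adjoining polynomials in $t$ with rational coefficients. The space--time operator is again a pre-generator, because each $\mathcal{G}^u$ is one and $\partial_t$ generates deterministic translation.
\end{enumerate}

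The second step is to convert (iii) into the forward equation for $\mathbf{A}$. Fix $\gamma f$. I would apply (iii) to $f$ and integrate by parts in $t$ against $\gamma$. The identity $\langle\gamma(t)f,\nu_t\rangle=\langle\gamma(0)f,\nu_0\rangle+\int_0^t\langle\gamma'(s)f+\gamma(s)\hat{\mathcal A}_sf,\nu_s\rangle\,ds$ follows by differentiating the product $\gamma(t)\langle f,\nu_t\rangle$. That map is absolutely continuous by (iii). Linearity then extends the identity to the whole algebra.

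The third step invokes the conclusion of Kurtz--Stockbridge for the relaxed generator $\partial_t+\hat{\mathcal A}_t$. It gives a solution $(\tau(t),X(t))$ of the space--time martingale problem with marginals $\delta_t\otimes\nu_t$. Since the first coordinate has generator $\partial_t$ and starts at $0$, it is the deterministic clock $\tau(t)=t$, and the time-inhomogeneous martingale property for $X$ follows. The marginal identity $\mathcal L(X(t))=\nu_t$ holds for every $t$, not merely for almost every $t$. For this I would use the weak continuity of $t\mapsto\nu_t$ implied by (iii) on the separating class, together with right-continuity of the constructed process.

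I expect the main obstacle to be the core of the Kurtz--Stockbridge argument: producing a process whose one-dimensional marginals match a given forward-equation solution when no uniqueness is available. If I had to reprove it rather than cite it, I would follow Echeverr\'ia's route. First, discretize time and, on each step, build a Markov chain whose transition kernel is the Yosida-type approximation $(I-n^{-1}\hat{\mathcal A}_t)^{-1}$. These chains are chosen so that their marginals agree with $\nu_{k/n}$, which uses the forward identity and the Hahn--Banach characterization of kernels that preserve a measure. Next, prove tightness in $D([0,1],\mathcal{S})$ from the uniform $\psi$-integrability in (i). Finally, identify limit points as solutions of the martingale problem via the pre-generator property. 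The delicate point is the kernel-existence step, which uses (ii) and the dissipativity of pre-generators.
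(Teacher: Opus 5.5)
Your top-level plan is what the paper does: obtain the statement from Kurtz--Stockbridge rather than prove it. The paper quotes Corollary~1.12 with the time-homogeneous operator $Af(x,u)=\mathcal{G}^uf(x)$, no singular part, and the time dependence carried by the relaxed-control flow $\mu_t(dx,du)=q_t(du\mid x)\,\nu_t(dx)$. Hypotheses (i)--(iii) are then that corollary's hypotheses read off. As quoted, its conclusion is already the time-inhomogeneous feedback martingale $f(X(t))-f(X(0))-\int_0^t\hat{\mathcal{A}}_sf(X(s))\,ds$.

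The space--time lift you insert is therefore unnecessary, and as set up it breaks the hypothesis you claim to verify in (b). On $[0,1]\times\mathcal{S}$ the operator $\mathbf{A}(\gamma f)=\gamma' f+\gamma\,\mathcal{G}^uf$ is not a pre-generator. Take $f\equiv1$ and $\gamma(t)=t$, both in your lifted algebra. Then $\gamma f$ attains its maximum at $t=1$, where $\mathbf{A}(\gamma f)=\gamma'(1)=1>0$. But any limit $\lambda_n\int(g(y)-g(z))\,\mu_n(dy)$ with $\lambda_n\ge0$ is $\le0$ at a maximizer $z$ of $g$.

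The claim that $\partial_t$ generates deterministic translation needs room to translate. On $[0,\infty)\times\mathcal{S}$ it holds, but then you must also continue $\nu_t$ and $q_t$ past $t=1$ as a solution of the forward equation, which the hypotheses do not provide. One fix is to adjoin an idle action with zero generator and freeze $\nu_t=\nu_1$.

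The lift also does not turn the problem into a stationary one. Suppose you mean to feed it to their stationary, Echeverr\'ia-type theorem. Then $\hat\nu(dt,dx)=\nu_t(dx)\,dt$ is not annihilated by $\mathbf{A}$: by your own integration by parts, $\int\mathbf{A}(\gamma f)\,d\hat\nu=\gamma(1)\langle f,\nu_1\rangle-\gamma(0)\langle f,\nu_0\rangle$, which is nonzero in general. The idea that converts a forward equation into a stationarity statement is a renewal device. Let the age--state pair restart at $\{0\}\times\nu_0$; for example, restarting at rate $\alpha$ makes $\alpha e^{-\alpha s}\,q_s(du\mid x)\,\nu_s(dx)\,ds$ stationary for the age--state--control triple. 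Then apply the stationary theorem and read the process off between restarts.

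The same omission affects your Echeverr\'ia fallback. The Hahn--Banach step there produces a transition kernel that preserves a single measure annihilating the generator. It does not produce kernels carrying $\nu_{k/n}$ to $\nu_{(k+1)/n}$, and $(I-n^{-1}\hat{\mathcal{A}}_t)^{-1}$ has no reason to do so.

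If you drop the lift and cite Corollary~1.12 directly, as the paper does, the argument is complete.
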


\begin{theorem}[Equivalence of the SOT formulations]\label{thm:equivalence}
    Let \Cref{assump:cost} (lower semicontinuous, convex cost) and \Cref{assump:affine} (convex action set, control-affine generator) hold, and assume the total jump intensity is dominated by the Lyapunov function, $q(x,\mathcal{S};u)\le\kappa_V(1+V(x))$ for all $(x,u)$ and some $\kappa_V<\infty$. For all $\tilde{\pi}\in\mathcal{P}(\mathcal{S})$, the value of the three SOT formulations is the same,
    \begin{equation}
        \label{eq:SOT equivalence}
        G(\tilde{\pi}) = \tilde{G}(\tilde{\pi}) = g(\tilde{\pi}).
    \end{equation}
\end{theorem}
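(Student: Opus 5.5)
We will prove the chain of inequalities $G(\tilde\pi)\ge\tilde G(\tilde\pi)\ge g(\tilde\pi)\ge G(\tilde\pi)$. The first two steps are projections: a process carries a flow, and a flow with a Markov control carries a relaxed measure. The last step is the realization step. Averaging the randomized kernel into its barycenter via \Cref{assump:affine}, and then invoking \Cref{prop:KS-superposition}, produces a genuine process--control pair.

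\emph{Step 1: $G\ge\tilde G$.} Take $(Q,\mu)\in\mathcal A$ with $\mathcal L(Q(0))=\mathcal L(Q(1))=\tilde\pi$, and set $\rho_t:=\mathcal L(Q(t))$.
\begin{itemize}
\item For $f\in UC_b^1$, the time-dependent version of \eqref{eq:mart-problem} says that $f(t,Q(t))-f(0,Q(0))-\int_0^t(\partial_s f+\mathcal G^{\mu(s,Q(s))}f)(s,Q(s))\,ds$ is a martingale. This follows from the time-independent one by the standard product/It\^o argument on $[0,1]$, using boundedness of $\partial_t f$.
\item Integrability of the compensator uses $|\mathcal G^u f|\le 2\|f\|_\infty q(x,\mathcal S;u)\le 2\|f\|_\infty\kappa_V(1+V)$ together with the finite-energy condition.
\item Taking expectations gives \eqref{eq:weak FP}. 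The finite energy of the flow is exactly $\mathbb E\int_0^1 V(Q(t))\,dt<\infty$.
\end{itemize}
The costs coincide by Fubini. Hence $(\{\rho_t\},\mu)$ is admissible for $\tilde G$ with the same cost.

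\emph{Step 2: $\tilde G\ge g$.} Given an admissible pair $(\{\rho_t\},\mu)$, set $\pi(dt,dx,du):=\delta_{\mu(t,x)}(du)\,\rho_t(dx)\,dt$.
\begin{itemize}
\item Condition (1) of \Cref{def:relaxed-admissible} holds by construction.
\item For condition (2), weak continuity of $t\mapsto\rho_t$ follows from \eqref{eq:weak FP}: the right side is absolutely continuous in $t$ for each time-independent $f\in C_b$, again by the $\kappa_V(1+V)$ bound. On a countable discrete $\mathcal S$, taking $f=\mathbf 1_{\{y\}}$ suffices for setwise continuity, and tightness gives weak continuity.
\item Condition (3), the BAR constraint \eqref{eq:BAR constraint}, is \eqref{eq:weak FP} at $t=1$.
\item Condition (4), finite energy, is inherited.
\end{itemize}
The costs agree.

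\emph{Step 3: $g\ge G$, the main obstacle.} Take $\pi\in\mathcal M$ with $\pi_t=q_t(du\mid x)\,\pi_{1,t}(dx)$, and let $\bar u(t,x)=\int u\,q_t(du\mid x)$. This is a measurable Markov control, and $\bar u(t,x)\in\mathcal U$ because $\mathcal U$ is convex and compact.

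First, convexity plus lower semicontinuity of $u\mapsto c(t,x,u)$ (\Cref{assump:cost}) gives, by Jensen, $c(t,x,\bar u)\le\int c(t,x,u)\,q_t(du\mid x)$. Hence the deterministic control does not increase the cost.

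Next we need the forward equation for each $t$, not only the loop identity \eqref{eq:BAR constraint} at $t=1$. Here we use the structure of $UC_b^1$. Apply \eqref{eq:BAR constraint} with $f(s,x)=\phi(s)h(x)$, where $h\in\mathcal D$ and $\phi\in C^1[0,1]$. Then $s\mapsto\langle h,\pi_{1,s}\rangle$ has distributional derivative $\langle\int\mathcal G^u h\,q_s(du\mid\cdot),\pi_{1,s}\rangle$, which is integrable by the $\kappa_V(1+V)$ bound and finite energy. Weak continuity of $\pi_{1,\cdot}$ and the endpoint values then pin it down to
\[
\langle h,\pi_{1,t}\rangle=\langle h,\tilde\pi\rangle+\int_0^t\langle\mathcal G^{\bar u(s,\cdot)}h,\pi_{1,s}\rangle\,ds,
\]
where we used \eqref{eq:affine-average} to replace the $q_s$-average by the barycenter. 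This is hypothesis (iii) of \Cref{prop:KS-superposition} with $\nu_t=\pi_{1,t}$ and kernel $\delta_{\bar u}$.

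For the remaining hypotheses of \Cref{prop:KS-superposition}:
\begin{itemize}
\item Hypothesis (i) holds with $\psi=1+V$ scaled by $\kappa_V$.
\item Hypothesis (ii) holds on countable discrete $\mathcal S$, taking $\mathcal D$ generated by finitely supported indicators and constants. Each $\mathcal G^u$ is a jump pre-generator.
\end{itemize}
The proposition then yields a process $X$ with $\mathcal L(X(t))=\pi_{1,t}$, solving \eqref{eq:mart-problem} with control $\bar u$. Its endpoints are $\pi_{1,0}=\pi_{1,1}=\tilde\pi$, and it has finite energy. So $(X,\bar u)\in\mathcal A$, and its cost is at most $\int c\,d\pi$.

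The delicate points, and where we expect most of the work, are the measurability of $\bar u$, the passage from the single $t=1$ identity to the forward equation at all $t$, and checking the pre-generator/core conditions. We would treat these first on countable $\mathcal S$, where all three are routine.
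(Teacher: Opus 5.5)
Your proposal is correct and follows the paper's proof essentially step for step. It uses the same chain $G\ge\tilde G\ge g\ge G$, the same relaxed measure $\rho_t(dx)\,\delta_{\mu(t,x)}(du)\,dt$ for the middle inequality, and the same realization step: barycentric control, time-localization of the BAR constraint with product test functions $\varphi(s)\psi(x)$, the affine-averaging identity, \Cref{prop:KS-superposition}, and Jensen. The only differences are cosmetic: you skip the paper's conditional-expectation projection in the first step (it is trivial anyway, since controls in $\mathcal A$ are Markov), you pass the kernel $\delta_{\bar u}$ rather than $q_t$ to \Cref{prop:KS-superposition}, and you take $\psi=1+V$ instead of $1+q(x,\mathcal S;u)$ in its hypothesis (i).
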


\begin{proof}
    We prove the equality of the three SOT values by proving the following inequalities:
    \begin{equation*}
        G(\tilde{\pi}) \geq \tilde{G}(\tilde{\pi}) \geq g(\tilde{\pi}) \geq G(\tilde{\pi}).
    \end{equation*}
    In this proof, we denote the control function for the process formulation by $\mu^Q(t,x)$ to distinguish it from the control function in the weak BAR formulation, which we continue to denote by $\mu(t,x)$.
    \begin{enumerate}
        \item $G(\tilde{\pi})\geq \tilde{G}(\tilde{\pi})$:\\
        Given $(Q,\mu^Q)\in\mathcal{A}$ with $\mathcal{L}(Q(0))=\mathcal{L}(Q(1))=\tilde\pi$, define the map:
        \[
        \mu(t,x) := \mathbb{E}[\mu^Q(t,Q(t))\mid Q(t)=x].
        \]
        From It\^o's lemma,
        \[
        \mathbb{E}[f(t,Q(t)) - f(0,Q(0))] = \int_0^t\mathbb{E}[\partial_sf(s,Q(s)) + \mathcal{G}^{\mu^Q}f(s,Q(s))]\,ds.
        \]
        Use the fact that
        \begin{align*}
            \mathbb{E}[\mathcal{G}^{\mu^Q}f(s,Q(s))] &= \mathbb{E}\big[\mathbb{E}[\mathcal{G}^{\mu^Q}f(s,Q(s))\mid Q(s)]\big]\\
            &= \mathbb{E}[\mathcal{G}^{\mu}f(s,Q(s))],
        \end{align*}
        where the inner step is the affine-averaging identity~\eqref{eq:affine-average} with $\mu(t,x)=\mathbb{E}[\mu^Q(t,Q(t))\mid Q(t)=x]$. Using the convexity of the map $u\mapsto c(t,x,u)$ and Jensen's inequality, we have
        \begin{equation*}
            \mathbb{E}\left[\int_0^tc(s,Q(s),\mu^Q(s,Q(s)))ds\right]\geq\mathbb{E}\left[\int_0^tc(s,Q(s),\mu(s,Q(s)))ds\right],
        \end{equation*}
        and thus, $G\geq \tilde{G}$.
         \item $\tilde{G}\geq g$:
        For an admissible pair $(\{\rho_t\},\mu)$, define
        \begin{equation*}
            \nu(dt,dx,du) : = \rho_t(dx)\,\delta_{\mu(t,x)}(du)\,dt.
        \end{equation*}
        We verify that $\nu$ belongs to the admissible set $\mathcal{M}$ of \Cref{def:relaxed-admissible}. The disintegration condition holds by construction. For the weak continuity of the state marginal, fix a bounded uniformly continuous $\psi$ and apply~\eqref{eq:weak FP} to the time-independent test function $f(s,x)=\psi(x)$. Observe that $t\mapsto\langle\psi,\rho_t\rangle$ is then an integral in time, hence continuous. Bounded uniformly continuous functions determine weak convergence, so the state marginal admits a weakly continuous version, and the endpoint condition $\rho_0=\rho_1=\tilde\pi$ is the loop condition of the flow. The BAR constraint of $\mathcal{M}$ is the $t=1$ case of~\eqref{eq:weak FP}, as noted after that definition. The finite-energy condition for $\nu$ reads $\int_0^1\langle 1+V,\rho_t\rangle\,dt<\infty$, which is the finite-energy condition of the flow. Finally, the costs coincide,
        \begin{equation*}
            \int c\,d\nu \;=\; \int_0^1\!\int_{\mathcal{S}} c\big(t,x,\mu(t,x)\big)\,\rho_t(dx)\,dt .
        \end{equation*}
        Since $g(\tilde\pi)$ is the infimum over $\mathcal{M}$, we conclude $\tilde G\ge g$.

        \item $g\geq G$:
        Given $\pi\in\mathcal{M}$, disintegrate $\pi(dt,dx,du)=q_t(du\mid x)\,\pi_{1,t}(dx)\,dt$, with a jointly measurable version of the kernel, and define the barycentric control
        \begin{equation*}
            \bar u(t,x) := \int_{\mathcal{U}}u\,q_t(du\mid x)\ \in\ \mathcal{U},
        \end{equation*}
        a Markov control. We first localize the BAR constraint in time. Testing~\eqref{eq:BAR constraint} with $f(s,x)=\varphi(s)\psi(x)$, where $\varphi\in C^1([0,1])$ and $\psi$ is bounded and uniformly continuous, and using the weak continuity of $\pi_{1,t}$, we obtain
        \begin{equation}\label{eq:localized-forward}
            \langle\psi,\pi_{1,t}\rangle \;=\; \langle\psi,\tilde\pi\rangle+\int_0^t\!\int_{\mathcal{S}\times\mathcal{U}}\mathcal{G}^u\psi(x)\,q_s(du\mid x)\,\pi_{1,s}(dx)\,ds\qquad\text{for all }t\in[0,1].
        \end{equation}
        By the affine-averaging identity~\eqref{eq:affine-average}, the inner integral equals $\mathcal{G}^{\bar u(s,x)}\psi(x)$, so~\eqref{eq:localized-forward} is condition (iii) of \Cref{prop:KS-superposition}. \Cref{prop:KS-superposition} then supplies a process $X$ and a filtration for which
        \[
        f(X(t))-f(X(0))-\int_0^t\mathcal{G}^{\bar u(s,\cdot)}f(X(s))\,ds
        \]
        is a martingale for every $f$ in the core, with $\mathcal{L}(X(t))=\pi_{1,t}$ for every $t$. Observe that condition (i) of the proposition is verified by $\psi(x,u)=1+q(x,\mathcal{S};u)$, since $|\mathcal{G}^uf(x)|\le 2\|f\|_\infty\,q(x,\mathcal{S};u)$, with the required integrability following from the intensity bound in the statement of the theorem together with the finite-energy condition of $\mathcal{M}$. Condition (ii) holds on a countable state space with the core spanned by the constant function and the singleton indicators, a countable family closed under multiplication that separates points, each $\mathcal{G}^u$ being a pre-generator as a jump generator. The pair $(X,\bar u)$ therefore solves the controlled martingale problem, satisfies the loop condition $\mathcal{L}(X(0))=\mathcal{L}(X(1))=\tilde\pi$, and has finite energy, $\mathbb{E}\int_0^1V(X(t))\,dt=\int_0^1\langle V,\pi_{1,t}\rangle\,dt<\infty$. Hence $(X,\bar u)\in\mathcal{A}$. Convexity of $c$ and Jensen's inequality give
        \begin{equation*}
            \int_{\mathcal{S}\times\mathcal{U}} c(t,x,u)\,\pi_t(dx,du)\ \geq\
            \int_{\mathcal{S}} c(t,x,\bar u(t,x))\,\pi_{1,t}(dx),
        \end{equation*}
        so the cost of $(X,\bar u)$ is at most $\int c\,d\pi$. Taking the infimum over $\pi\in\mathcal{M}$ yields $g\geq G$.
    \end{enumerate}

    \hfill $\square$
\end{proof}

\subsection{The Dual Problem and Weak Duality}

The Lagrangian~\eqref{eq:lagrangian} suggests the dual problem
\begin{equation}\label{eq:SOT Dual}
    \sup_{f\in\mathcal{F}}\int_{\mathcal{S}}(f(1,x)-f(0,x))\,\tilde{\pi}(dx),
\end{equation}
the BAR-SOT analogue of the stochastic-transport duality of \citet[Theorem 2.4]{mikami2021stochastic}. \Cref{lem:weak-duality} below establishes weak duality, bounding the primal value $g(\tilde\pi)$ from below by the dual value. In \Cref{thm:ansatz} we establish equality at the optimizing marginal $\tilde\pi^*$. 

The supremum in~\eqref{eq:SOT Dual} is taken over the weighted class
\begin{equation}\label{eq:dual-class}
\begin{aligned}
\mathcal{F} := \Big\{f:[0,1]\times\mathcal{S}\to\mathbb{R}\ :\ f(\cdot,x)\in C^1 &\text{ for each }x,\ \ \sup_{t}\|f(t,\cdot)\|_V+\sup_{t}\|\partial_tf(t,\cdot)\|_V<\infty,\\ &\text{ and $f$ satisfies the HJB}\Big\},	
\end{aligned}
\end{equation}
where $\|\phi\|_V:=\sup_{x\in\mathcal{S}}|\phi(x)|/(1+V(x))$ is the weighted supremum norm associated with the Lyapunov function $V$ of \Cref{assump:stability}, and the HJB is defined as
\begin{equation}
    \label{eq:HJB}
    \partial_tf(t,x) + \mathcal{H}f(t,x) = 0,
\end{equation}
with the Hamiltonian $\mathcal{H}$ of~\eqref{eq:Hamiltonian}.
The Lagrangian argument requires only the subsolution inequality $\partial_tf+\mathcal Hf\le0$. We impose equality in $\mathcal F$ because \Cref{lem:weak-duality} below holds verbatim for subsolutions. The maximizer exhibited in \Cref{thm:ansatz} satisfies the equality, so the restriction does not lose anything at $\tilde\pi^*$. The soft-feasible class of \Cref{sec:periodic-bvp} is defined with the inequality.
 The weighted class (rather than $UC_b^1$, for instance) is required because the dual optimizer grows with the state whenever the running cost does. Indeed, \Cref{thm:ansatz} exhibits the maximizer as $h(x)+(t-T)g^*$ with $h$ the Poisson potential of~\eqref{eq:Poisson}, which is unbounded for the queue of \Cref{ex:queue}. The pairing in~\eqref{eq:SOT Dual} is finite for every $\tilde\pi$ with $\langle V,\tilde\pi\rangle<\infty$, to which the dual statement is henceforth restricted.

\begin{lemma}[Weak duality]\label{lem:weak-duality}
Suppose the total jump intensity is bounded, $\sup_{x,u}q(x,\mathcal{S};u)<\infty$, and that the jump kernel satisfies the compatibility bound $\sup_{u\in\mathcal{U}}\int_{\mathcal{S}}(1+V(y))\,q(x,dy;u)\le\kappa_V\,(1+V(x))$ for some $\kappa_V<\infty$ (since $V\ge0$, the compatibility bound implies the intensity domination of \Cref{thm:equivalence} with the same constant); both hold for the nearest-neighbor kernel of \Cref{ex:queue} with the quadratic $V$, the intensity being at most $\sum_i\lambda_i+\max_i\mu_i$. Then for every $\tilde\pi\in\mathcal{P}(\mathcal{S})$ with $\langle V,\tilde\pi\rangle<\infty$,
\begin{equation}\label{eq:weak-duality}
    g(\tilde{\pi})\ \ge\ \sup_{f\in\mathcal{F}}\int_{\mathcal{S}}(f(1,x)-f(0,x))\,\tilde{\pi}(dx),
\end{equation}
and the analogous inequality holds on every horizon $[0,T]$.
\end{lemma}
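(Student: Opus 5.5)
The plan is the standard Lagrangian argument, made rigorous by extending the BAR identity from bounded test functions to the weighted class $\mathcal F$. Fix $\pi\in\mathcal M$ with endpoint law $\tilde\pi$ and fix $f\in\mathcal F$. If the BAR constraint~\eqref{eq:BAR constraint} can be applied to $f$, then
\[
\int_{\mathcal S}(f(1,x)-f(0,x))\,\tilde\pi(dx)=\int\{\partial_tf+\mathcal G^uf\}\,d\pi\le\int c\,d\pi .
\]
The inequality holds pointwise: by the HJB~\eqref{eq:HJB}, or merely the subsolution inequality~\eqref{eq:hjb-inequality}, we have $\partial_tf(t,x)+\mathcal G^uf(t,x)-c(t,x,u)\le\partial_tf+\mathcal Hf\le 0$ for every $u$. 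Taking the infimum over $\pi\in\mathcal M$ and then the supremum over $f\in\mathcal F$ gives~\eqref{eq:weak-duality}. The same computation on $[0,T]$ gives the horizon-$T$ statement. So the whole proof reduces to justifying BAR for $f\in\mathcal F$, which need not be bounded.

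\textbf{Step 1: integrability.} For $f\in\mathcal F$, set $K:=\sup_t\|f(t,\cdot)\|_V$. Then
\[
|\mathcal G^uf(t,x)|\le\int|f(t,y)|\,q(x,dy;u)+|f(t,x)|\,q(x,\mathcal S;u)\le K\kappa_V(1+V(x))+K(1+V(x))\bar q,
\]
where $\bar q$ is the bound on the intensity. Together with $|\partial_tf|\le C(1+V)$, every term in the Lagrangian is dominated by a multiple of $1+V$, which is $\pi$-integrable by the finite-energy condition of $\mathcal M$. Likewise $\langle|f(t,\cdot)|,\tilde\pi\rangle<\infty$ because $\langle V,\tilde\pi\rangle<\infty$.

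\textbf{Step 2: truncation (the main obstacle).} The constraint~\eqref{eq:BAR constraint} is stated only for $UC_b^1$. I would approximate $f$ by $f_n:=f\,\chi_n$, where $\chi_n=\mathbf 1_{\{V\le n\}}$. On a countable discrete space $\chi_n$ is continuous, and $f_n$ is bounded because the sublevel sets of $V$ are relatively compact, hence finite. Each $f_n$ is therefore bounded with uniformly continuous $C^1$ time dependence, so BAR applies to it.

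\textbf{Step 3: passing to the limit.} As $n\to\infty$ we have $f_n\to f$, $\partial_tf_n\to\partial_tf$, and $\mathcal G^uf_n(t,x)\to\mathcal G^uf(t,x)$ pointwise. The last convergence follows by dominated convergence under $q(x,dy;u)$, with dominating function $K(1+V(y))+K(1+V(x))$, which is integrable by the compatibility bound. The uniform domination $|\mathcal G^uf_n|\le K(\kappa_V+\bar q)(1+V(x))$ holds with the same constants as in Step 1, since $|f_n|\le|f|$. Dominated convergence under $\pi$ and under $\tilde\pi$ then yields BAR for $f$ itself.

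The delicate point is exactly this uniform-in-$n$ domination of $\mathcal G^uf_n$. Mass jumping across the boundary of the truncation set is what requires the compatibility bound on $\int(1+V(y))\,q(x,dy;u)$, beyond the plain intensity bound. Once Step 3 is done, the Lagrangian inequality above completes the proof.
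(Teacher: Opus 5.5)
Your proposal is correct and follows the paper's proof. Both arguments extend BAR from $UC_b^1$ to $f\in\mathcal F$ by truncating, $f\chi_n$, using dominated convergence with a uniform-in-$n$ bound $C(1+V(x))$ built from the compatibility bound, the intensity bound and finite energy, and both then integrate the pointwise inequality $\partial_tf+\mathcal G^uf\le c$ against $\pi$. The only difference is your hard cutoff $\mathbf 1_{\{V\le n\}}$ in place of the paper's smooth $\chi(V/n)$, which is legitimate here because on the countable discrete state space the sublevel sets of $V$ are finite.
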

\begin{proof}
Fix $f\in\mathcal{F}$ and $\pi\in\mathcal{M}$, the admissible set of relaxed marginal measures of \Cref{def:relaxed-admissible}. The BAR constraint~\eqref{eq:BAR constraint}, stated for test functions in $UC_b^1$, can be extended to $f$ by applying it to the truncations $f\chi_n$, with $\chi_n:=\chi(V/n)$ for a smooth cutoff $\chi$. In this case, the compatibility bound and the finite-energy condition dominate $|\partial_t(f\chi_n)|+|\mathcal{G}^u(f\chi_n)|$ by a multiple of $1+V(x)$ uniformly in $n$, and dominated convergence implies
\[
\int_{\mathcal{S}}(f(1,x)-f(0,x))\,\tilde{\pi}(dx)=\int\big\{\partial_tf(t,x)+\mathcal{G}^uf(t,x)\big\}\,\pi(dt,dx,du).
\]
Pointwise for every $(t,x,u)$, the definition~\eqref{eq:Hamiltonian} gives $\mathcal{G}^uf-c\le\mathcal{H}f$, so the HJB~\eqref{eq:HJB} yields $\partial_tf+\mathcal{G}^uf\le\big(\partial_tf+\mathcal{H}f\big)+c= c$. Integrating against $\pi\ge0$ and combining with the display above,
$\int_{\mathcal{S}}(f(1,x)-f(0,x))\,\tilde{\pi}(dx)\le\int c\,d\pi$.
Taking the infimum over $\pi\in\mathcal{M}$ and then the supremum over $f\in\mathcal{F}$ proves the claim; the horizon-$T$ case is identical. 
\end{proof}

\subsection{The Ergodic Problem}
To obtain the value function for the long-term average control problem, we still need to solve the meta-SOT problem by infimizing over all endpoint laws $\tilde\pi$. Thus, we are actually looking for,
\begin{equation}\label{eq:meta-SOT}
    g^*_1 := \inf_{\tilde{\pi}} g(\tilde{\pi}).
\end{equation}
The ergodic problem is when the optimization is done after taking an average time cost and then looking at the infinite-horizon problem, i.e., 
\begin{equation}\label{eq:ergodic SOT}
    \inf_{\tilde{\pi}}\liminf_{T\to\infty}\frac{1}{T}g_T(\tilde{\pi}),
\end{equation}
where $g_T(\tilde\pi)$ is the SOT problem with horizon $[0,T]$ instead of $[0,1]$, and $g^*_T:=\inf_{\tilde\pi}g_T(\tilde\pi)$. Before showing that the ergodic SOT problem and the SOT problem on the horizon $[0,1]$ have the same value, we prove some useful results.

In this section, we consider only time-independent costs, i.e., $c(t,x,u) = c(x,u)$. Before stating the theorem, we note that the pointwise maximizer appearing in it is well defined. In particular, observe that for each $x$, the map $u\mapsto\mathcal{G}^uh(x)-c(x,u)$ is upper semicontinuous on the compact set $\mathcal{U}$ ($u\mapsto\mathcal{G}^uh(x)$ is affine on the convex $\mathcal{U}\subset\mathbb{R}^k$, hence continuous, and $c$ is lower semicontinuous), so a maximizer exists, and it admits a measurable version by the measurable maximum theorem~\citep[Theorem~18.19]{aliprantisborder2006}. Sufficient conditions for the existence of the Poisson pair $(h,g^*)$ hypothesized below are given by \citet{guo2009}.

\begin{assumption}[Coercive cost]\label{assump:coercive}
The map $x\mapsto\inf_{u\in\mathcal{U}}c(x,u)$ has relatively compact sublevel sets, and $c$ is bounded on compact subsets of $\mathcal{S}\times\mathcal{U}$. Equivalently, for every $M<\infty$ there is a compact set $K\subset\mathcal{S}$ such that $\inf_{u\in\mathcal{U}}c(x,u)\ge M$ for all $x\notin K$. On $\mathcal{S}=\mathbb{Z}^k_+$ this states that $\inf_{u\in\mathcal{U}}c(x,u)\to\infty$ as $|x|\to\infty$.
\end{assumption}
Since $\inf_{u}c(x,u)\le c(x,u)\le W(x)$ by the cost bound of \Cref{assump:stability}, coercivity forces $W$ itself to be coercive; a model with bounded cost satisfies \Cref{assump:stability} but not this assumption.
\begin{lemma}[Recurrence of the maximizer]\label{lem:maximizer-recurrence}
Let \Cref{assump:coercive} hold. Let $h:\mathcal{S}\to\mathbb{R}$ and $g^*\in\mathbb{R}$ satisfy $g^*+\mathcal{G}^{u^{**}}h(x)=c\big(x,u^{**}(x)\big)$ for all $x$, where $u^{**}(x)\in\arg\max_u\{\mathcal{G}^uh(x)-c(x,u)\}$ is a measurable maximizer. Suppose that $-h$ is bounded below with relatively compact sublevel sets, and that the process generated by $u^{**}$ is irreducible. Then this process is positive Harris recurrent with a unique invariant measure $\tilde\pi^*$, and $\langle c(\cdot,u^{**}),\tilde\pi^*\rangle<\infty$.
\end{lemma}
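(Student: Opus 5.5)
The plan is a Foster--Lyapunov argument in which the Lyapunov function is built from the Poisson potential itself. Set $\tilde V:=-h+\beta$, where $\beta$ is chosen so that $\tilde V\ge 0$; such a $\beta$ exists because $-h$ is bounded below. By hypothesis $\tilde V$ has relatively compact sublevel sets. Rearranging the Poisson identity gives, for every $x$,
\[
\mathcal G^{u^{**}}\tilde V(x)=-\mathcal G^{u^{**}}h(x)=g^*-c\big(x,u^{**}(x)\big).
\]
This is a drift condition with the state-dependent rate $c(\cdot,u^{**})$ in place of $W$.

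The next step uses \Cref{assump:coercive}. Choose a compact $K\subset\mathcal S$ with $c(x,u)\ge g^*+1$ for all $x\notin K$ and all $u$. Since $c\ge0$, we can write
\[
\mathcal G^{u^{**}}\tilde V(x)\le -1+(g^*+1)\mathbf 1_K(x),
\qquad
\mathcal G^{u^{**}}\tilde V(x)\le -\tfrac12 c\big(x,u^{**}(x)\big)+b'\mathbf 1_K(x),
\]
where $b':=\sup_K c+|g^*|$. The constant $b'$ is finite because $c$ is bounded on compacts. The second inequality follows from $g^*-c\le -\tfrac12 c$ off $K$.

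In the countable discrete-topology setting fixed at the start of this section, $K$ is finite, so it is petite for the irreducible jump process. The first inequality is then the classical Foster criterion, condition (CD2) of Meyn--Tweedie for continuous-time processes. Together with irreducibility it gives positive Harris recurrence, and hence a unique invariant probability $\tilde\pi^*$.

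For finiteness of the cost, apply the $f$-norm ergodic theorem (Meyn--Tweedie, condition (CD3)) with $f=\tfrac12 c(\cdot,u^{**})+1$. It yields $\langle c(\cdot,u^{**}),\tilde\pi^*\rangle\le 2b'\,\tilde\pi^*(K)<\infty$.

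Alternatively, one can argue directly. Apply Dynkin's formula to $\tilde V$ at the localized times $\tau_n$, the exit times of the sublevel sets $\{\tilde V\le n\}$, starting from a point of $K$:
\[
\mathbb E_x\int_0^{t\wedge\tau_n}\tfrac12 c\,ds\le \tilde V(x)+b't .
\]
Letting $n\to\infty$ by monotone convergence, dividing by $t$, and using the ergodic theorem under positive Harris recurrence gives $\tfrac12\langle c,\tilde\pi^*\rangle\le b'$.

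The main obstacle is justifying Dynkin's formula, that is, the martingale property, for the unbounded $\tilde V$. We must also rule out explosion of the process before invoking the Meyn--Tweedie criteria. I would handle both by localization. Since $\tilde V$ is coercive, the stopping times $\tau_n$ increase. The process is stopped before it leaves a finite set, on which $\tilde V$ is bounded, so the generator identity holds for the truncated function. Nonexplosion follows because the drift inequality gives $\mathbb E\,\tilde V(X(t\wedge\tau_n))\le \tilde V(x)+(g^*+1)t$. Markov's inequality then gives $\mathbb P(\tau_n\le t)\le(\tilde V(x)+(g^*+1)t)/n\to0$, hence $\tau_n\to\infty$ almost surely.

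A secondary point is that $\mathcal G^{u^{**}}\tilde V(x)$ must be a finite sum. It is well defined because $h$ satisfies the Poisson identity pointwise, with $c$ finite.
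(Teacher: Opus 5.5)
Your argument is the paper's proof: shift $-h$ to a nonnegative Lyapunov function, read the drift $g^*-c(\cdot,u^{**})$ off the Poisson equation, use coercivity to make it negative off a finite (hence petite) set, and invoke the continuous-time Foster--Lyapunov criteria of Meyn--Tweedie. Your localization arguments for non-explosion and for Dynkin's formula add detail the paper leaves implicit.

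There is one slip. You chose $K$ only so that $c\ge g^*+1$ off $K$. The bound $g^*-c\le-\tfrac12 c$ off $K$ needs $c\ge 2g^*$, which that choice does not imply when $g^*>1$. Likewise, the (CD3) form with $f=\tfrac12 c+1$ needs $c\ge 2g^*+2$. There are two easy repairs. You can enlarge $K$ by coercivity so that $c(x,u)\ge 2|g^*|+2$ for $x\notin K$. Alternatively, use the paper's weight $W_0=(c(\cdot,u^{**})-g^*)\vee 1$. It gives $\mathcal{G}^{u^{**}}\tilde V\le -W_0+(|g^*|+1)\mathbf 1_K$ with your original $K$, and yields positive recurrence and $\langle W_0,\tilde\pi^*\rangle<\infty$, hence finite cost, in a single step.
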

\begin{proof}
 Rearranging the Poisson equation, 
\[
\mathcal{G}^{u^{**}}(-h)(x)\;=\;g^*-c\big(x,u^{**}(x)\big),
\]
and by coercivity there is a compact set $C_0$ with $c(x,u^{**}(x))\ge g^*+W_0(x)$ off $C_0$, where $W_0:=\big(c(\cdot,u^{**})-g^*\big)\vee1$. The function $V_0:=-h-\inf(-h)\ge0$ has relatively compact sublevel sets and satisfies $\mathcal{G}^{u^{**}}V_0\le -W_0+b_0\,\mathbf{1}_{C_0}$ for a finite $b_0$. The Foster--Lyapunov criterion for irreducible continuous-time processes~\citep{meyntweedie1993} then yields positive Harris recurrence, a unique invariant measure $\tilde\pi^*$, and $\langle W_0,\tilde\pi^*\rangle<\infty$, hence $\langle c(\cdot,u^{**}),\tilde\pi^*\rangle<\infty$. 
\end{proof}
For \Cref{ex:queue}, \Cref{assump:coercive} is immediate. The sublevel condition requires $-h$ to grow without bound, consistent with the quadratic growth of the relative value function of a stable queue.

\begin{theorem}[Strong duality via the Poisson ansatz]\label{thm:ansatz}
Let \Cref{assump:affine}, \Cref{assump:cost}, and the hypotheses of \Cref{lem:weak-duality} hold. Suppose there exist a function $h:\mathcal{S}\to\mathbb{R}$ with $\|h\|_V<\infty$ and a scalar $g^*$ such that, with the pointwise maximizer
\[
    u^{**}(x) := \arg\max_u\{\mathcal{G}^uh(x) - c(x,u)\},
\]
the pair $(h,g^*)$ solves the Poisson equation,
\begin{equation}\label{eq:Poisson}
    g^* + \mathcal{G}^{u^{**}}h(x) = c(x,u^{**}) \qquad\text{for all }x.
\end{equation}
Assume the conditions of \Cref{lem:maximizer-recurrence}. The controlled process generated by $u^{**}$ is then positive Harris recurrent with a unique invariant measure $\tilde\pi^*$, $(\mathcal{G}^{u^{**}})^*\tilde\pi^*=0$; we assume in addition that $\langle V,\tilde\pi^*\rangle<\infty$. Then the ansatz 
\begin{equation}\label{eq:ansatz}
    f^*(t,x):=h(x)+(t-T)g^*
\end{equation} 
satisfies:
\begin{enumerate}
\item[(i)] $f^*$ solves the finite-horizon HJB~\eqref{eq:HJB}, hence $f^*\in\mathcal{F}$;
\item[(ii)] $f^*$ attains the dual supremum at $\tilde\pi^*$, with $\langle f^*_T-f^*_0,\tilde\pi^*\rangle = Tg^* = \sup_{f\in\mathcal{F}}\langle f_T-f_0,\tilde\pi^*\rangle$;
\item[(iii)] for every $\tilde\pi$, $g_T(\tilde\pi)\ge Tg^*$; hence $g^*_T=\inf_{\tilde\pi}g_T(\tilde\pi)=Tg^*$, attained at $\tilde\pi^*$.
\end{enumerate}
\end{theorem}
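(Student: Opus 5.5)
The plan is to verify (i) by direct substitution, obtain (ii) from weak duality together with a feasible primal point, and then derive (iii) by lower-bounding $g_T(\tilde\pi)$ through the Poisson potential.

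\textbf{Step (i).} Substitute the ansatz~\eqref{eq:ansatz}. We have $\partial_tf^*=g^*$, and $\mathcal Hf^*(t,x)=\sup_u\{\mathcal G^uh(x)-c(x,u)\}$ because the generator annihilates the time-only term $(t-T)g^*$. This supremum is attained at $u^{**}(x)$, so by~\eqref{eq:Poisson} it equals $-g^*$. Hence $\partial_tf^*+\mathcal Hf^*=0$. For membership in $\mathcal F$ we note three facts: $\|f^*(t,\cdot)\|_V\le\|h\|_V+T|g^*|$ since $V\ge0$; $\partial_tf^*$ is the constant $g^*$; and time dependence is affine, hence $C^1$.

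\textbf{Step (ii).} The identity $\langle f^*_T-f^*_0,\tilde\pi^*\rangle=Tg^*$ is immediate. For the supremum we sandwich. Weak duality (\Cref{lem:weak-duality}, horizon $T$) gives $\sup_{f\in\mathcal F}\langle f_T-f_0,\tilde\pi^*\rangle\le g_T(\tilde\pi^*)$. For the matching upper bound on $g_T(\tilde\pi^*)$ we use a stationary competitor, $\pi(dt,dx,du)=\tilde\pi^*(dx)\,\delta_{u^{**}(x)}(du)\,dt$ on $[0,T]$, normalized as in the horizon-$T$ convention. Its state marginal is constant, so it is weakly continuous with equal endpoints. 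Finite energy follows from $\langle V,\tilde\pi^*\rangle<\infty$.

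For the BAR constraint, test with $f\in UC^1_b$. Since $\tilde\pi^*$ does not depend on time, the $\partial_tf$ term integrates to $\langle f(T)-f(0),\tilde\pi^*\rangle$. The generator term vanishes because $\langle\mathcal G^{u^{**}}f(t,\cdot),\tilde\pi^*\rangle=0$ for bounded $f$. This is invariance of $\tilde\pi^*$, justified on a countable space by bounded intensity and Fubini.

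The cost of this competitor is $T\langle c(\cdot,u^{**}),\tilde\pi^*\rangle$. Integrating~\eqref{eq:Poisson} against $\tilde\pi^*$ gives
\[
\langle c(\cdot,u^{**}),\tilde\pi^*\rangle=g^*+\langle\mathcal G^{u^{**}}h,\tilde\pi^*\rangle .
\]
The last term is the main obstacle, because $h$ is unbounded. The plan is to truncate: apply invariance to $h\chi_n$ with $\chi_n=\chi(V/n)$, as in the proof of \Cref{lem:weak-duality}. The compatibility bound $\int(1+V(y))q(x,dy;u)\le\kappa_V(1+V(x))$, together with $\|h\|_V<\infty$, dominates $|\mathcal G^{u^{**}}(h\chi_n)|$ by a multiple of $1+V$. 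Since this is $\tilde\pi^*$-integrable, dominated convergence yields $\langle\mathcal G^{u^{**}}h,\tilde\pi^*\rangle=0$. Thus the cost is $Tg^*$, and $g_T(\tilde\pi^*)\le Tg^*\le\sup_f\langle f_T-f_0,\tilde\pi^*\rangle\le g_T(\tilde\pi^*)$, so all these quantities coincide.

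\textbf{Step (iii).} Fix $\tilde\pi$. If $\langle V,\tilde\pi\rangle=\infty$, then finite energy together with weak continuity make $\mathcal M$ empty, or we restrict as the paper does, so we may assume $\langle V,\tilde\pi\rangle<\infty$. Applying \Cref{lem:weak-duality} with $f^*\in\mathcal F$ gives
\[
g_T(\tilde\pi)\ge\langle f^*_T-f^*_0,\tilde\pi\rangle=\langle h,\tilde\pi\rangle-\langle h,\tilde\pi\rangle+Tg^*=Tg^*,
\]
where the pairing $\langle h,\tilde\pi\rangle$ is finite since $\|h\|_V<\infty$. Combined with Step (ii), $g^*_T=Tg^*$, attained at $\tilde\pi^*$.

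As noted in Step (ii), the only delicate point is the truncation argument establishing $\langle\mathcal G^{u^{**}}h,\tilde\pi^*\rangle=0$ for the unbounded $h$. Positive Harris recurrence and uniqueness of $\tilde\pi^*$ are supplied directly by \Cref{lem:maximizer-recurrence}.
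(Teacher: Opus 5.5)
Your argument is correct in substance, but Step (ii) takes a different route from the paper's.

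\textbf{Your route versus the paper's in (ii).} The paper proves (ii) entirely on the dual side, by a comparison argument. For any $f\in\mathcal F$, the difference $w=f^*-f$ satisfies $\partial_t w+\mathcal G^{u^{**}}w\ge 0$, because $u^{**}$ is optimal in $\mathcal Hf^*$ but only feasible in $\mathcal Hf$. Pairing this inequality with the constant flow $\rho_t\equiv\tilde\pi^*$ through the weak BAR gives $\langle w_T-w_0,\tilde\pi^*\rangle\ge0$. The stationary primal competitor and its cost computation appear only afterwards, in (iii).

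You instead sandwich $\sup_{f\in\mathcal F}\langle f_T-f_0,\tilde\pi^*\rangle$. From below you use $Tg^*$, via feasibility of $f^*$. From above you use $g_T(\tilde\pi^*)\le Tg^*$, via \Cref{lem:weak-duality} together with the stationary competitor. This delivers (ii) and the attainment half of (iii) in one stroke, reuses the lemma instead of a new argument, and displays the zero duality gap at $\tilde\pi^*$ directly.

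The paper's comparison argument has its own advantages:
\begin{itemize}
\item it uses only the invariance of $\tilde\pi^*$, not the integrated Poisson identity for the competitor's cost;
\item it holds verbatim for subsolutions;
\item it is the form reused in Step 2 of the consistency theorems, where $f^\varepsilon$ satisfies only the soft inequality $\partial_t f+\mathcal H^\varepsilon f\le0$.
\end{itemize}

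\textbf{A gap in your reduction in (iii).} You claim that $\langle V,\tilde\pi\rangle=\infty$ forces $\mathcal M$ to be empty, but weak continuity and finite energy do not give this. Lower semicontinuity yields only $\langle V,\tilde\pi\rangle\le\liminf_{t\downarrow0}\langle V,\pi_{1,t}\rangle$. That liminf can be $+\infty$ while $t\mapsto\langle V,\pi_{1,t}\rangle$ stays time-integrable, for instance with a $t^{-1/2}$ blow-up.

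The emptiness claim is in fact true under the bounded-intensity hypothesis of \Cref{lem:weak-duality}, but by a different argument. With $\bar q:=\sup_{x,u}q(x,\mathcal S;u)$, the outflow bound in the localized forward equation gives $\pi_{1,t}(x)\ge e^{-\bar q t}\tilde\pi(x)$. Hence $\langle V,\pi_{1,t}\rangle=\infty$ for every $t$, which contradicts finite energy.

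The paper's fix is simpler, and you should use it, since the statement covers every $\tilde\pi$. Run the truncation argument directly on $f^*$. Because $f^*_T-f^*_0\equiv Tg^*$, the endpoint pairings equal $Tg^*\langle\chi_n,\tilde\pi\rangle$; these are bounded and converge to $Tg^*$ with no moment condition on $\tilde\pi$. The interior term is dominated using only the finite energy of $\pi$.
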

\begin{proof}
\emph{(i)} Since $f^*(t,\cdot)$ and $h$ differ by the spatially constant $(t-T)g^*$, which every generator $\mathcal{G}^u$ annihilates, $\mathcal{G}^uf^*(t,x)=\mathcal{G}^uh(x)$ for all $t$ and $u$. By the definition of $u^{**}$ as the maximizer of the Hamiltonian,
\[
\mathcal{H}f^*(t,x) = \sup_u\{\mathcal{G}^uh(x)-c(x,u)\} = \mathcal{G}^{u^{**}}h(x) - c(x,u^{**}),
\]
so, substituting and using the Poisson equation,
\[
\begin{aligned}
\partial_t f^*(t,x) + \mathcal{H}f^*(t,x)
&= g^* + \mathcal{G}^{u^{**}}h(x) - c(x,u^{**}) = 0.
\end{aligned}
\]

Thus $f^*\in\mathcal{F}$.

\emph{(ii)} Let $f\in\mathcal{F}$ be any HJB solution and set $w:=f^*-f$. Subtracting the two HJBs,
\[
\partial_t w + \big(\mathcal{H}f^*(t,x)-\mathcal{H}f(t,x)\big)=0.
\]
The control $u^{**}$ maximizes the Hamiltonian at $f^*$, whereas for $f$ it is merely feasible, not necessarily optimal. Therefore
\[
\mathcal{H}f^*(t,x)=\mathcal{G}^{u^{**}}f^*(t,x) - c(x,u^{**}),
\qquad
\mathcal{H}f(t,x)\ge \mathcal{G}^{u^{**}}f(t,x) - c(x,u^{**}),
\]
and subtracting, by the linearity of the operator $f\mapsto\mathcal{G}^{u^{**}}f$, gives $\mathcal{H}f^*-\mathcal{H}f\le \mathcal{G}^{u^{**}}f^*-\mathcal{G}^{u^{**}}f=\mathcal{G}^{u^{**}}w$, whence
\[
\partial_t w + \mathcal{G}^{u^{**}}w \ge 0.
\]

Now, consider the \emph{constant} flow $\rho_t\equiv\tilde\pi^*$, which is admissible under $u^{**}$ since $\tilde\pi^*$ is $u^{**}$-invariant ($\partial_t\rho_t=(\mathcal{G}^{u^{**}})^*\tilde\pi^*=0$), and it is periodic with $\rho_0=\rho_T=\tilde\pi^*$. The weak BAR~\eqref{eq:weak FP} for this flow yields
\[
\langle w_T-w_0,\tilde\pi^*\rangle
= \int_0^T\!\int_{\mathcal{S}}\big\{\partial_t w + \mathcal{G}^{u^{**}}w\big\}\,\tilde\pi^*(dx)\,dt \ \ge\ 0,
\]
so $\langle f^*_T-f^*_0,\tilde\pi^*\rangle\ge\langle f_T-f_0,\tilde\pi^*\rangle$ for every $f\in\mathcal{F}$. (Since $w$ has the weighted growth of the class~\eqref{eq:dual-class} rather than lying in $UC_b^1$, the identity~\eqref{eq:weak FP} is applied to $w$ via a routine truncation, using $\langle V,\tilde\pi^*\rangle<\infty$ and dominated convergence.) Since $f^*_T-f^*_0=Tg^*$ is constant in $x$, the attained value is $\langle Tg^*,\tilde\pi^*\rangle = Tg^*$.

\emph{(iii)} For any $\tilde\pi$, pairing $f^*$ against an arbitrary admissible $\pi$ via the truncation argument of \Cref{lem:weak-duality} gives $g_T(\tilde\pi)\ \ge\ \langle f^*_T-f^*_0,\tilde\pi\rangle = Tg^*$; no moment condition on $\tilde\pi$ is needed, since the endpoint increment $f^*_T-f^*_0\equiv Tg^*$ is constant in $x$, so its truncated pairings are bounded, while the interior domination uses only the finite-energy condition on $\pi$. If no admissible $\pi$ exists, $g_T(\tilde\pi)=+\infty$ and the bound is trivial. For the reverse inequality at $\tilde\pi^*$, the stationary pair $\pi^*(dt,dx,du):=\delta_{u^{**}(x)}(du)\,\tilde\pi^*(dx)\,dt$ is admissible for the relaxed problem: it has finite energy since $\langle V,\tilde\pi^*\rangle<\infty$, and after integrating $\partial_tf$ in time the BAR constraint reduces to $\int_0^T\langle\mathcal{G}^{u^{**}}f(t,\cdot),\tilde\pi^*\rangle\,dt=0$, the invariance of $\tilde\pi^*$. Its cost is
\[
\int c\,d\pi^* \;=\; T\,\langle c(\cdot,u^{**}),\tilde\pi^*\rangle \;=\; T\big(g^*+\langle\mathcal{G}^{u^{**}}h,\tilde\pi^*\rangle\big)\;=\;Tg^*,
\]
by the Poisson equation~\eqref{eq:Poisson} and invariance, the latter applied to $h$ via the truncation argument of \Cref{lem:weak-duality}. Hence $g_T(\tilde\pi^*)\le Tg^*$, so $g_T(\tilde\pi^*)=Tg^*$ and $\inf_{\tilde\pi}g_T(\tilde\pi)=Tg^*$, attained at $\tilde\pi^*$.

\hfill $\square$
\end{proof}

By \Cref{thm:ansatz}(iii), $g^*_T=Tg^*$ for every horizon $T>0$, so the horizon enters only as a scale,
\[
g^*_1 = g^* = \frac{g^*_T}{T}.
\]
It remains to connect the finite-horizon values to the ergodic problem itself.
\begin{theorem}[The meta-SOT value is the ergodic value]\label{thm:ergodic-value}
    Let the hypotheses of \Cref{thm:ansatz} and \Cref{lem:weak-duality} hold.
    \begin{enumerate}
    \item[(i)] The value of the ergodic SOT problem \eqref{eq:ergodic SOT} is $g^*$.
    \item[(ii)] If, in addition, $u^{**}$ is admissible and every admissible stationary map $\mu$ satisfies $\langle V,\tilde\pi_\mu\rangle<\infty$ (for \Cref{ex:queue} this follows from a cubic Foster--Lyapunov function under the load condition), then the ergodic control value $g^{\mathrm{erg}}$ of \eqref{eq:ergodic-cost} also equals $g^*$. The long-run average-cost control problem and the ergodic self-transport problem thus have the same value.
    \end{enumerate}
\end{theorem}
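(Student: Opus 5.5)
Part (i) follows almost directly from \Cref{thm:ansatz}(iii). That result gives $g_T(\tilde\pi)\ge Tg^*$ for every $\tilde\pi$ and every horizon $T>0$, so $\frac1T g_T(\tilde\pi)\ge g^*$ for all $T$. Hence $\liminf_{T\to\infty}\frac1T g_T(\tilde\pi)\ge g^*$ for every $\tilde\pi$, and the infimum in \eqref{eq:ergodic SOT} is at least $g^*$. For the reverse inequality I will take $\tilde\pi=\tilde\pi^*$. By \Cref{thm:ansatz}(iii), $g_T(\tilde\pi^*)=Tg^*$ for every $T$, realized by the stationary pair $\pi^*$. The ratio $\frac1T g_T(\tilde\pi^*)$ is therefore identically $g^*$, its $\liminf$ is $g^*$, and the value of \eqref{eq:ergodic SOT} equals $g^*$. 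The only thing to check is that the horizon-$T$ versions of the lemma and theorem are available, which both statements assert.

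For (ii) I will prove two inequalities. For the upper bound $g^{\mathrm{erg}}\le g^*$, I will use the admissible stationary map $u^{**}$. By \Cref{lem:maximizer-recurrence} the process it generates is positive Harris recurrent with invariant law $\tilde\pi^*$, and $\langle c(\cdot,u^{**}),\tilde\pi^*\rangle<\infty$. The ergodic theorem for positive Harris processes gives the time-average convergence of the costs; to get convergence of expectations I will additionally use the Foster--Lyapunov drift of \Cref{assump:stability} together with $c\le W$. The result is
\[
\lim_{T\to\infty}\frac1T\mathbb E\!\int_0^T c(Q(s),u^{**}(Q(s)))\,ds=\langle c(\cdot,u^{**}),\tilde\pi^*\rangle=g^*.
\]
The last equality is the Poisson equation integrated against $\tilde\pi^*$, exactly as in \Cref{thm:ansatz}(iii). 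To sidestep the initial-law dependence altogether, I will start the process at $Q(0)\sim\tilde\pi^*$. The process is then stationary and the time average equals $g^*$ for every $T$.

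For the lower bound $g^{\mathrm{erg}}\ge g^*$, I will take any admissible stationary $\mu$. By \Cref{assump:stability} it is Harris with the common drift, so it has an invariant law $\tilde\pi_\mu$ with $\langle W,\tilde\pi_\mu\rangle<\infty$. By the same ergodic argument, its long-run average cost is $\langle c(\cdot,\mu),\tilde\pi_\mu\rangle$, independently of the initial law, using the $f$-norm ergodic theorem of Meyn--Tweedie with $f=W\ge c$. Next, the stationary measure $\delta_{\mu(x)}(du)\,\tilde\pi_\mu(dx)\,dt$ on $[0,1]$ lies in $\mathcal M$ with endpoint law $\tilde\pi_\mu$: the BAR reduces to invariance, and finite energy is exactly the hypothesis $\langle V,\tilde\pi_\mu\rangle<\infty$. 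Hence $\langle c(\cdot,\mu),\tilde\pi_\mu\rangle\ge g_1(\tilde\pi_\mu)\ge g^*$ by \Cref{thm:ansatz}(iii) with $T=1$. Taking the infimum over $\mu$ gives $g^{\mathrm{erg}}\ge g^*$.

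The main obstacle is identifying the $\limsup$ of expected time averages in \eqref{eq:ergodic-cost} with $\langle c(\cdot,\mu),\tilde\pi_\mu\rangle$ for an arbitrary initial law. This requires more than pathwise ergodicity: one needs uniform integrability of the cost, or the Meyn--Tweedie bound
\[
\Big|\mathbb E_x\!\int_0^T c\,ds-T\langle c,\tilde\pi_\mu\rangle\Big|\le B(V(x)+1),
\]
which follows from the drift inequality \eqref{eq:foster-lyapunov} with $c\le W$. I would cite this bound and divide by $T$. For the lower bound, I will use only the fact that the average cost dominates $\langle c,\tilde\pi_\mu\rangle$, which also follows from Fatou's lemma applied to Cesàro averages of the laws: these converge to $\tilde\pi_\mu$, and $c\ge0$ is lower semicontinuous.
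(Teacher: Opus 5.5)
Your proposal is correct and follows the paper's proof. Part (i) is the same argument from \Cref{thm:ansatz}(iii). In part (ii), both you and the paper identify each admissible stationary policy's long-run average with $\langle c(\cdot,\mu),\tilde\pi_\mu\rangle$ via Meyn--Tweedie and show it is at least $g^*$, with equality at $u^{**}$. The one variation is that you obtain $\langle c(\cdot,\mu),\tilde\pi_\mu\rangle\ge g^*$ by placing $\delta_{\mu(x)}(du)\,\tilde\pi_\mu(dx)\,dt$ in $\mathcal M$ and invoking \Cref{thm:ansatz}(iii) at $T=1$, whereas the paper pairs $c(x,u)\ge g^*+\mathcal G^u h(x)$ directly with $\tilde\pi_\mu$; this is the same computation unpacked. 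A minor point: your uniform bound $B(V(x)+1)$ needs $\langle V,\tilde\pi_\mu\rangle<\infty$ in addition to the drift, which the hypotheses of (ii) supply.
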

\begin{proof}
    \emph{(i)} Fix $\tilde\pi$. \Cref{thm:ansatz}(iii) gives $g_T(\tilde\pi)\ge Tg^*$ for every $T>0$. Observe that, for each fixed $\tilde\pi$,
    \begin{equation*}
        \liminf_{T\to\infty}\frac{1}{T}\,g_T(\tilde{\pi})\ \ge\ g^* ,
    \end{equation*}
    and at $\tilde\pi^*$ the same theorem gives $g_T(\tilde\pi^*)=Tg^*$ for every $T$, so the liminf equals $g^*$ there. Taking the infimum over $\tilde\pi$ of the pointwise liminf yields the value $g^*$, attained at $\tilde\pi^*$.

    \emph{(ii)} By the Poisson equation~\eqref{eq:Poisson} and the definition of $u^{**}$ as the pointwise maximizer, $\mathcal{G}^{u}h(x)-c(x,u)\le\mathcal{G}^{u^{**}}h(x)-c(x,u^{**}(x))=-g^*$ for every $u\in\mathcal{U}$, i.e.,
    \[
    c(x,u)\ \ge\ g^*+\mathcal{G}^{u}h(x)\qquad\text{for all }(x,u).
    \]
    Fix an admissible stationary $\mu$ and pair this inequality at $u=\mu(x)$ with its invariant law $\tilde\pi_\mu$: invariance gives $\langle\mathcal{G}^{\mu}h,\tilde\pi_\mu\rangle=0$, extended from bounded functions to $h$ (which lies in the weighted class) by the truncation argument of \Cref{lem:weak-duality}, using $\langle V,\tilde\pi_\mu\rangle<\infty$. Hence $\langle c(\cdot,\mu),\tilde\pi_\mu\rangle\ge g^*$, with equality at $\mu=u^{**}$ by~\eqref{eq:Poisson}. Under \Cref{assump:stability} each admissible $\mu$ is positive Harris recurrent with $c\le W$ and $\langle W,\tilde\pi_\mu\rangle<\infty$, hence $\langle c(\cdot,\mu),\tilde\pi_\mu\rangle<\infty$, so the long-run average in~\eqref{eq:ergodic-cost} equals $\langle c(\cdot,\mu),\tilde\pi_\mu\rangle$ from any initial state~\citep{meyntweedie1993}. Taking the infimum over admissible $\mu$ gives $g^{\mathrm{erg}}=g^*$, attained at $u^{**}$.

    \hfill $\square$
\end{proof}

\begin{remark}[Regulated processes and costed boundary control]\label{rem:regulated}
Reflected dynamics are already within the framework. Observe that the reflection of \Cref{ex:queue} enters through the generator itself, via the boundary indicator in~\eqref{eq:queue-generator}, so the results of this and the following sections apply to regulated processes without modification. Charging the boundary is a different matter. If the local time accumulated at the boundary carries a proportional capacity cost, the problem acquires a singular control component. Its BAR identity picks up an expected boundary occupation measure, and the dual acquires a gradient constraint at the boundary. This extension requires the singular part of the martingale-problem framework of \citet{kurtz2001stationary} and a duality theory of its own, and we will develop it in separate work.
\end{remark}

\section{The Optimality System and the Difficulty of a Direct Solve}\label{sec:optimality-system}
We now assemble the coupled optimality system identified by the duality theory and explain why a direct solve of this system is difficult. This motivates the entropic regularization developed in \Cref{sec:entropy-reg}.

\subsection{The optimality system}

The dual~\eqref{eq:SOT Dual} reduces the self-transport problem to finding a single dual potential $f$, and the lift of \Cref{thm:ansatz} reduces that potential to a relative value $h$ and a scalar rate $g^*$ through the ansatz $f^*(t,x)=h(x)+(t-T)g^*$ of~\eqref{eq:ansatz}. Recovering the value and the optimal control therefore amounts to solving a coupled forward--backward system with a periodic boundary condition. Specifically, the system comprises a backward equation for $f$ (HJB), a forward equation for $\rho_t$ (Kolmogorov), and a periodic-in-time closure~\eqref{eq:optimality-system-bc} that ties the two endpoints to the common law $\tilde\pi$. To be precise, 
\begin{equation}\label{eq:optimality-system-backward}
    \partial_t f(t,x) + \mathcal{H}f(t,x) = 0,
    \qquad u^{f}(t,x)\in\arg\max_{u\in\mathcal{U}}\{\mathcal{G}^uf(t,x) - c(x,u)\},
\end{equation}
is the backward HJB~\eqref{eq:HJB} with Hamiltonian~\eqref{eq:Hamiltonian} together with the forward Kolmogorov (master) equation for the marginal flow $\rho_t$ on $\mathcal{S}$ under the control it induces,
\begin{equation}\label{eq:optimality-system-forward}
    \partial_t \rho_t = \big(\mathcal{G}^{u^{f}}\big)^{*}\rho_t,
\end{equation}
with $\mathcal{G}^{u}$ the generator~\eqref{eq:generator}, and the whole system closed by the self-transport boundary condition
\begin{equation}\label{eq:optimality-system-bc}
    \rho_0 = \rho_T = \tilde\pi.
\end{equation}

Observe that since the running cost here is independent of $\rho$, the backward HJB~\eqref{eq:optimality-system-backward} does \emph{not} depend on the marginal flow $\rho_t$. Instead, $f$ determines the optimal control $u^f$, which in turn drives $\rho_t$ through~\eqref{eq:optimality-system-forward}, but $\rho_t$ never feeds back into the equation for $f$. The forward equation is a passive readout once $f$ is known, and the entire difficulty in this computation lies on the dual side. Under the ansatz~\eqref{eq:ansatz}, the maximizer $u^{f^*}$ is time-independent and coincides with the $u^{**}$ of \Cref{thm:ansatz}.

\subsection{Why a direct solve is hard}

Three structural features make~\eqref{eq:optimality-system-backward}--\eqref{eq:optimality-system-bc} resist a direct solution. They are properties of the problem itself, independent of any particular method for solving it.

\smallskip\noindent\textbf{(a) The rate $g^*$ is an unknown one must find along with the solution.}
Simply stated, there is no equation that gives the ergodic rate $g^*$ on its own. The periodic closure~\eqref{eq:optimality-system-bc} forces the endpoint gap $f_T-f_0$ to be constant in $x$, equal to $Tg^*$. The lift in~\eqref{eq:ansatz} then requires $h$ to solve the Poisson equation~\eqref{eq:Poisson}, $g^* + \mathcal{G}^{u^{**}}h = c(x,u^{**})$. This is a nonlinear eigenvalue problem. A consistent time-periodic potential exists for exactly one value of $g^*$, and at that value $h$ is determined only modulo constants, since the generator annihilates constants and is therefore non-coercive. See \citet{guo2009} for the corresponding theory of the average-cost optimality equation on unbounded state spaces. This is what makes the periodic problem hard. An \emph{ordinary} finite-horizon problem, with a prescribed terminal condition (or a discounted problem, with its discount factor), carries a terminal or zeroth-order term that makes the operator invertible, hence well-posed with a unique solution. Our periodic closure~\eqref{eq:optimality-system-bc} prescribes no terminal data, and instead it asks only that the flow return to its own, as-yet-unknown stationary law. In technical terms, the periodicity reinstates exactly the eigenvalue structure of the ergodic problem. Lifting to a finite horizon changes the \emph{form} of the problem, not its difficulty.

\smallskip\noindent\textbf{(b) The HJB is fully nonlinear and, in general, degenerate.}
It is well-known that the equation maximizes over controls, and is therefore nonlinear. Furthermore, the maximizer often flips abruptly and the value develops a crease that need not be smooth. Precisely, the Hamiltonian $\mathcal{H}f=\sup_u\{\mathcal{G}^uf-c\}$ is a supremum over controls, so~\eqref{eq:optimality-system-backward} is a fully nonlinear equation in $f$. When the generator is control-affine, the cost control-independent and the set of admissible controls finite --- the scheduling case --- the control-dependent part of $\mathcal{H}f$ reduces to the support function of the action set, $\max_i \mu_i f^i_-$ over the vertices $\{\mu_ie_i\}$, whose maximizer is a discontinuous bang-bang switch (cf.\ \Cref{ex:singular}, switching on the sign of $\mu_1 f^1_- - \mu_2 f^2_-$). The relative value $h$ is then generally non-differentiable, with a crease along a switching surface whose location is itself part of the unknown. When $\mathcal{S}$ is a continuum, no classical solution need exist and one is in the viscosity-solution regime. On the lattice $\mathbb{Z}^k_+$ the equation is a difference equation and smoothness is not at issue, but the abrupt switch of the maximizer across an unknown surface remains the operative difficulty.

\smallskip\noindent\textbf{(c) The boundary data is self-consistent.}
Plainly: the endpoint law is not given to us; it is whatever law the optimal dynamics settle into, and it is itself being optimized. More precisely, the closure~\eqref{eq:optimality-system-bc} is stated in terms of $\tilde\pi$, but $\tilde\pi$ is not external data: by \Cref{thm:ansatz} the relevant endpoint law is the stationary measure $\tilde\pi^*$ produced by the optimal control $u^{**}$, and the outer (meta) problem~\eqref{eq:meta-SOT} optimizes over $\tilde\pi$ as well. The boundary condition thus refers back to the solution rather than fixing it in advance, so~\eqref{eq:optimality-system-forward}--\eqref{eq:optimality-system-bc} cannot be posed as a forward problem with known endpoints.

\smallskip
Finally, $f$ and $\rho_t$ live on the state space $\mathcal{S}$ (the lattice $\mathbb{Z}^k_+$ for the queue), which is large and in principle unbounded, so the problem is also high-dimensional. We flag this as a matter of scale rather than structure. It does not change the nature of~\eqref{eq:optimality-system-backward}--\eqref{eq:optimality-system-bc}, and it is addressed separately by the neural representation of the dual in \Cref{sec:periodic-bvp}.

For these reasons we do not solve the raw system~\eqref{eq:optimality-system-backward}--\eqref{eq:optimality-system-bc} directly. In the next section (\Cref{sec:entropy-reg}) we introduce a relative-entropy penalty on the control that replaces the hard supremum in the Hamiltonian with a smooth soft Hamiltonian, which removes the bang-bang discontinuity of (b) and conditions the problem. However, (a) cannot be resolved through this relaxation directly. We use the fact that the endpoint gap of the optimal dual, $f_T^* - f_0^*$, is exactly $Tg^*$ and independent of $x$, and therefore can be imposed as a periodic boundary condition on the soft HJB as done in \Cref{sec:periodic-bvp}.

\section{Iterative Resolution via Entropy Regularization}\label{sec:entropy-reg}

\subsection{Entropy Regularization}

We now regularize the relaxed marginal-measure problem~\eqref{eq:relaxed marginal measures SOT}. Throughout this section the endpoint law $\tilde\pi$ is held fixed; the minimization over $\tilde\pi$ is taken up in \Cref{sec:periodic-bvp}. The purpose is to remove the switching discontinuity identified as difficulty (b) in \Cref{sec:optimality-system}. For $\pi\in\mathcal M$, conditions 1 and 2 of \Cref{def:relaxed-admissible} give $\pi(dt,dx,du)=\rho_t(dx)\,q_t(du\mid x)\,dt$, with $\rho_t:=\pi_{1,t}$ the state marginal, $\rho_0=\rho_1=\tilde\pi$, and $q_t(\cdot\mid x)\in\mathcal P(\mathcal U)$ the randomized control kernel; we write $(\rho,q)\in\mathcal A_{\mathcal M}(\tilde\pi)$ when the measure so built lies in $\mathcal M$. Fix a reference control kernel $r_{t,x}\in\mathcal P(\mathcal U)$, $(t,x)\in[0,1]\times\mathcal S$. The KL-regularized problem is
\begin{equation}
g^\varepsilon(\tilde{\pi})
:=
\inf_{(\rho,q)\in\mathcal A_{\mathcal M}(\tilde{\pi})}
\Bigg\{
\int_0^1 \int_{\mathcal{S}} \int_{\mathcal{U}} c(t,x,u)\,q_t(du\mid x)\rho_t(dx)\,dt
+
\varepsilon \int_0^1 \int_{\mathcal{S}}\int_\mathcal{U}\,
D_\mathrm{KL}\bigl(q_t(\cdot\mid x)\,\|\,r_{t,x}\bigr)\rho_t(dx)\,dt
\Bigg\},
\end{equation}
The entropy term does not replace the BAR constraint. It penalizes the deviation of the control kernel from the reference and leaves the endpoint and BAR structure of $\mathcal M$ untouched; the only change is how the conditional control law is selected once the state marginal and the BAR multiplier are fixed, which is what makes the problem suitable for an iterative construction.

\subsection{BAR Dual and Gibbs Kernel}



With the same dual multiplier that enforces the weak BAR constraint, the entropically regularized Lagrangian becomes:
\begin{equation}
\begin{aligned}
\mathcal L^\varepsilon(\rho,q;f)
&=
\int_{\mathcal{S}}\bigl(f(1,x)-f(0,x)\bigr)\tilde{\pi}(dx) \quad
+ \int_0^1 \int_{\mathcal{S}}
\Bigg[
-\partial_t f(t,x) \\
&\qquad\qquad
+ \int_{\mathcal{U}}
\bigl(c(t,x,u)-\mathcal{G}^uf(t,x)\bigr)\,q_t(du\mid x)
+ \varepsilon\,D_\mathrm{KL}\bigl(q_t(\cdot\mid x)\,\|\,r_{t,x}\bigr)
\Bigg]\rho_t(dx) dt.
\end{aligned}
\end{equation}

For fixed $(t,x,\rho,f)$, the minimization over the conditional control law is pointwise:
\begin{equation}
q_f^\varepsilon(\cdot\mid t,x)
:=
\arg\min_{q\in\mathcal P(\mathcal{U})}
\left\{
\int_{\mathcal{U}}\bigl(c(t,x,u)-\mathcal{G}^uf(t,x)\bigr)\,q(du)
+
\varepsilon\,D_\mathrm{KL}\bigl(q\,\|\,r_{t,x}\bigr)
\right\}.
\end{equation}

This minimizer has the Gibbs form:
\begin{equation}\label{eq:gibbs-kernel}
q_f^\varepsilon(du\mid t,x)
=
\frac{
\exp\!\left(\dfrac{\mathcal{G}^uf(t,x)-c(t,x,u)}{\varepsilon}\right)\,r_{t,x}(du)
}{
\mathcal{Z}^\varepsilon f(t,x)
},
\end{equation}
with normalization given by the operator $\mathcal{Z}^\varepsilon:\mathcal{D}\to\mathcal{D}$ with action given by,
\begin{equation}
\mathcal{Z}^\varepsilon f(t,x)
:=
\int_{\mathcal{U}}
\exp\!\left(\dfrac{\mathcal{G}^uf(t,x)-c(t,x,u)}{\varepsilon}\right)\,r_{t,x}(du).
\end{equation}

Define the soft-Hamiltonian operator by:
\begin{equation}
\mathcal H^\varepsilon f(t,x)
:=
\varepsilon \log (\mathcal{Z}^\varepsilon f(t,x)).
\end{equation}

Equivalently, we can define the soft-Hamiltonian operator directly as,
\begin{equation}
\mathcal{H}^\varepsilon f(t,x):=-\inf_{q\in\mathcal P(\mathcal{U})}
\left\{
\int_{\mathcal{U}}\bigl(c(t,x,u)-\mathcal{G}^uf(t,x)\bigr)\,q(du)
+
\varepsilon\,D_\mathrm{KL}\bigl(q\,\|\,r_{t,x}\bigr)
\right\}.
\end{equation}

At each point $(t,x)$, the dual BAR multiplier induces a locally optimal randomized control law obtained by exponentially tilting the reference kernel. This is the controlled-process analog of the Gibbs variational structure in the Schr\"odinger bridge problem.

The Gibbs kernel solves the local control randomization problem. It does not by itself determine the evolution of the state marginal. To obtain a self-transport bridge, this local Gibbs kernel must be converted into a forward transition-rate kernel on the state space.

\subsection{Forward Self-Transport Under the Gibbs Kernel}

For each dual BAR multiplier $f$, the previous subsection gives the pointwise Gibbs kernel
\begin{equation}
q_f^\varepsilon(du\mid t,x).
\end{equation}
This is the local probability law on the admissible controls $u$ at time $t$ and state $x$. It resolves the entropy-regularized minimization over the conditional control law. The remaining step is to convert this local randomized control into a forward law for the state process.

Define the averaged jump-rate kernel induced by the Gibbs kernel $q^\varepsilon_f$:
\begin{equation}
\bar q_f^\varepsilon(t,x,dy)
:=
\int_{\mathcal{U}} q(x,dy;u)\,q_f^\varepsilon(du\mid t,x).
\end{equation}
This is the conditional mean transition intensity under the randomized law $q_f^\varepsilon(\cdot\mid t,x)$. Since the Gibbs kernel is supported on the admissible relaxed control class inherited from $r_{t,x}$, the kernel $\bar q_f^\varepsilon(t,x,dy)$ belongs to the corresponding relaxed admissible hull at state $x$.

For each fixed $f$, define the time-inhomogeneous transition-rate kernel $K_f^\varepsilon(t;x,y)$ on $\mathcal{S}$ through the Radon--Nikodym derivative of the averaged jump-rate kernel with respect to some reference $\sigma$-finite measure $m(dy)$ on $\mathcal{S}$
\begin{equation}
K_f^\varepsilon(t;x,y)=\frac{d\bar q_f^\varepsilon(t,x,\cdot)}{dm}(y),
\qquad y\neq x,
\end{equation}
and
\begin{equation}
K_f^\varepsilon(t;x,x)
:=
-\int_{\mathcal{S}\setminus\{x\}}K^\varepsilon_f(t;x,y)m(dy),
\end{equation}
so that mass is conserved and we have:
\begin{equation*}
    \int_{\mathcal{S}}K^\varepsilon_f(t;x,y)m(dy) = 0.
\end{equation*}

\begin{remark}
    The theory presented in this section does not rely on the existence of the reference measure $m(dy)$; however, it is more convenient to state the algorithm in terms of densities rather than measures.
\end{remark}

The primitive transition rates are those of the controlled generator $\mathcal{G}^u$, while the averaged tilted jump-rate kernel gives the effective rates. The diagonal entry is defined so that each row of $K_f^\varepsilon$ sums to zero, so the kernel conserves total probability in the forward equation. All other transition rates are zero whenever they are excluded by the controlled jump kernel.

Given the common endpoint law $\tilde\pi$, the forward marginal law generated by $f$ is the family
\begin{equation}
\rho^{f,\varepsilon}
=
\{\rho_t^{f,\varepsilon}:0\leq t\leq 1\}
\subset \mathcal P(\mathcal{S}),
\end{equation}
where $\rho_t^{f,\varepsilon}$ is the probability measure of the controlled process. Let the density of these marginal measures with respect to the reference measure $m(dx)$ be $\rho_t^{f,\varepsilon}(x)$. This density solves the forward Kolmogorov equation:
\begin{equation}
\frac{d}{dt}\rho_t^{f,\varepsilon}(y)
=
\int_{\mathcal{S}\setminus \{y\}}
[\rho_t^{f,\varepsilon}(x)K_f^\varepsilon(t;x,y) - \rho_t^{f,\varepsilon}(y)K^\varepsilon_f(t;y,x)]m(dx),
\qquad y\in \mathcal{S},
\end{equation}
with initial condition
\begin{equation}
\rho_0^{f,\varepsilon}=\tilde\pi.
\end{equation}
Equivalently, the forward equation is:
\begin{equation*}
    \frac{d}{dt}\rho_t^{f,\varepsilon}(y)
=
\int_{\mathcal{S}}
\rho_t^{f,\varepsilon}(x)K_f^\varepsilon(t;x,y)m(dx),
\qquad y\in \mathcal{S}.
\end{equation*}

The first term records transition inflow. The second term records transition outflow. Hence, once the backward potential $f$ is fixed, the Gibbs kernel induces a full forward law on $\mathcal{S}$.

The entropy-regularized self-transport bridge is obtained by requiring the forward law generated by the Gibbs kernel to return to the same endpoint law:
\begin{equation}
\rho_1^{f,\varepsilon}=\tilde\pi.
\end{equation}
This is the global self-transport condition. The Gibbs kernel describes local control randomization, while $K_f^\varepsilon$ converts that local information into probability flow across the state space.

Therefore, the regularized BAR-SOT problem can be written as the coupled forward--backward system:
\begin{equation}\label{eq:soft hjb}
\partial_t f^\varepsilon(t,x)
+
\mathcal H^\varepsilon f^\varepsilon(t,x)
=
0,
\qquad (t,x)\in[0,1]\times \mathcal{S},
\end{equation}
\begin{equation}
\frac{d}{dt}\rho_t^{f^\varepsilon,\varepsilon}(y)
=
\int_{\mathcal{S}}
\rho_t^{f^\varepsilon,\varepsilon}(x)
K_{f^\varepsilon}^\varepsilon(t;x,y)m(dx),
\qquad y\in \mathcal{S},
\end{equation}
together with the two-point marginal constraint
\begin{equation}
\rho_0^{f^\varepsilon,\varepsilon}
=
\rho_1^{f^\varepsilon,\varepsilon}
=
\tilde\pi.
\end{equation}

The function $f^\varepsilon$ plays the role of a dual potential or value function. The backward equation determines the Gibbs kernel through the soft Hamiltonian. The forward equation transports probability under that kernel. The endpoint identity closes the problem by requiring that the resulting law returns to the same marginal $\tilde\pi$ at time $1$.

\subsection{Iterative Endpoint Scaling}

The forward--backward system suggests an endpoint-fitting procedure. The relaxed primal object is the measure
\begin{equation}
\pi(dt,x,du)=dt\,\rho_t(x)\,q_t(du\mid x),
\end{equation}
which records time, state, and randomized control. Instead of updating this full measure directly, the iteration updates the terminal dual potential. Once the terminal potential is fixed, the backward equation determines the time-dependent BAR multiplier, the Gibbs formula determines the Gibbs kernel, and the forward equation determines the resulting marginal law.

Let
\begin{equation}
\psi^{(0)}:\mathcal{S}\to\mathbb R
\end{equation}
be an initial terminal potential, and define the corresponding terminal factor by
\begin{equation}
\Psi^{(0)}(x):=\exp\!\left(\frac{\psi^{(0)}(x)}{\varepsilon}\right).
\end{equation}
This exponential change of variables puts the terminal potential in multiplicative form, which is the natural form for a Sinkhorn-type scaling step.

Given $\psi^{(m)}$, solve the backward soft Hamilton--Jacobi equation
\begin{equation}
\partial_t f^{(m)}(t,x)
+
\mathcal H^\varepsilon f^{(m)}(t,x)
=
0,
\qquad 0\leq t<1,
\end{equation}
with terminal condition
\begin{equation}
f^{(m)}(1,x)=\psi^{(m)}(x).
\end{equation}
This step propagates the current terminal guess backward through time and produces the time-dependent dual BAR multiplier $f^{(m)}$.

For notational convenience, define
\begin{equation}
\rho_t^{(m)}:=\rho_t^{f^{(m)},\varepsilon}.
\end{equation}
The forward generator at iteration $m$ is the same kernel from the previous subsection, evaluated at the current potential:
\begin{equation}
K_{f^{(m)}}^\varepsilon(t;x,y).
\end{equation}

Propagate the forward marginal law from the prescribed initial marginal:
\begin{equation}
\rho_0^{(m)}=\tilde\pi,
\end{equation}
\begin{equation}
\frac{d}{dt}\rho_t^{(m)}(y)
=
\int_{\mathcal{S}}
\rho_t^{(m)}(x)K_{f^{(m)}}^\varepsilon(t;x,y)m(dx),
\qquad y\in \mathcal{S}.
\end{equation}
This produces a terminal marginal $\rho_1^{(m)}$. The initial marginal $\rho_0^{(m)}=\tilde\pi$ is fixed in every cycle. What changes from one step to the next is the terminal marginal generated by the current tilted dynamics.

The endpoint mismatch is corrected by multiplicatively rescaling the terminal factor. On states where both $\tilde\pi(x)$ and $\rho_1^{(m)}(x)$ are positive, set
\begin{equation}
\Psi^{(m+1)}(x)
=
\Psi^{(m)}(x)
\left(
\frac{d\tilde\pi}{\rho_1^{(m)}}(x)
\right)^\eta,
\qquad 0<\eta\leq 1.
\end{equation}
When $\tilde\pi(x)=0$, set $\Psi^{(m+1)}(x):=0$, since such states carry no mass under the endpoint constraint $\rho_1=\tilde\pi$. The ratio $\tilde\pi(x)/\rho_1^{(m)}(x)$ increases the terminal factor at states where the current forward pass places too little mass and decreases it at states where the current forward pass places too much mass. The parameter $\eta$ is a relaxation parameter. When $\eta=1$, the full correction is applied. When $0<\eta<1$, the update is damped.

Since
\begin{equation}
\Psi^{(m)}(x)=\exp\!\left(\frac{\psi^{(m)}(x)}{\varepsilon}\right),
\end{equation}
the multiplicative update becomes an additive update for the terminal potential:
\begin{equation}
\psi^{(m+1)}(x)
=
\psi^{(m)}(x)
+
\eta\varepsilon
\left[
\log \tilde\pi(x)-\log \rho_1^{(m)}(x)
\right].
\end{equation}
Thus, the update adds a positive term when the current terminal law is too small at $x$ and a negative term when it is too large.

The iteration has the form
\begin{equation}
\psi^{(m)}
\longrightarrow
f^{(m)}
\longrightarrow
q_{f^{(m)}}^\varepsilon
\longrightarrow
\rho_1^{(m)}
\longrightarrow
\psi^{(m+1)}.
\end{equation}
The current terminal potential determines the backward dual potential. That potential determines the Gibbs kernel. The Gibbs kernel generates the forward law, and the endpoint mismatch updates the terminal potential for the next cycle.

This is the BAR-SOT analog of Sinkhorn scaling. The scaling is performed on the terminal dual factor that generates the Gibbs kernel. The backward equation converts the terminal factor into a time-dependent dual BAR multiplier, and the forward Kolmogorov equation tests whether the resulting tilted dynamics return the marginal law to $\tilde\pi$.

A natural stopping criterion is
\begin{equation}
\left\|\rho_1^{(m)}-\tilde\pi\right\|_1\leq\delta,
\end{equation}
for a prescribed tolerance $\delta>0$. This criterion measures how closely the terminal marginal produced by the current iterate matches the required endpoint law.

At convergence, the pair
\begin{equation}
\bigl(f^\varepsilon,\rho^{f^\varepsilon,\varepsilon}\bigr)
\end{equation}
satisfies the regularized self-transport bridge system, and the Gibbs kernel $q_{f^\varepsilon}^\varepsilon$ gives the associated entropy-regularized relaxed control law.

\subsection{Convergence of Endpoint Scaling}

Given that \(r_{t,x}(du)\) is the reference control kernel used in the Gibbs formula, let
\(R^{\varepsilon,r}\) denote the \emph{cost-tilted reference path law}: the law of the state--action process driven by the reference control \(r\), reweighted by the Gibbs cost factor \(e^{-\frac{1}{\varepsilon}\int_0^1 c\,dt}\) and normalized. It is the continuous-time counterpart of the cost-tilted reference \(R^c\) of \Cref{sec:finite-mdp}. The two ways of writing the regularized objective agree on the admissible class: for the path law \(P^q\) generated by an admissible control kernel \(q\), the transition law given the action is common to \(P^q\) and \(R^{\varepsilon,r}\), those terms cancel in the path relative entropy, and
\(D_\mathrm{KL}(P^q\,\|\,R^{\varepsilon,r})\) equals the accumulated control penalty \(\int_0^1\int_{\mathcal{S}}D_\mathrm{KL}\big(q_t(\cdot\mid x)\,\|\,r_{t,x}\big)\rho_t(dx)\,dt\) plus the normalized cost term. Thus the KL penalty is applied only to the control, while the path law \(R^{\varepsilon,r}\) is the object on which endpoint operations below are performed. Let $e_t(\omega) = X_t(\omega)$ be the evaluation map at time $t$ and define
\[
K^{\varepsilon,r} := (e_0,e_1)_\#R^{\varepsilon,r}\in\mathcal{P}(\mathcal{S}\times\mathcal{S}),
\]
as the endpoint kernel. Equivalently, for all $C\in\mathcal{B}(\mathcal{S}\times\mathcal{S})$,
\[
    K^{\varepsilon,r}(C) := R^{\varepsilon,r}((X_0,X_1)\in C).
\]
Define the set $\mathbf{S}_{\tilde\pi} = \mathrm{supp}(\tilde\pi)\subseteq \mathcal{S}$. Let
\[
\mathcal{B}(\mathbf{S}_{\tilde\pi}) := \{A\cap\mathbf{S}_{\tilde\pi}: A\in\mathcal{B}(\mathcal{S})\}
\]
be the Borel $\sigma$-algebra. Assume that 
\[
K^{\varepsilon,r}(A,B) >0, \text{ for every } A,B\in\mathcal{B}(\mathbf{S}_{\tilde\pi})\text{ with }\tilde\pi(A)\tilde\pi(B)>0.
\]
For measures $\sigma,\rho \in\mathcal{P}(\mathbf{S}_{\tilde\pi})$, define the set
\[
\Pi(\sigma,\rho) := \{\gamma\in\mathcal{P}(\mathbf{S}_{\tilde\pi}\times \mathbf{S}_{\tilde\pi}): (\mathrm{pr}_0)_{\#}\gamma = \sigma,(\mathrm{pr}_1)_{\#}\gamma = \rho\},
\]
and assume that the set
\[
\{\gamma\in\Pi(\tilde\pi,\tilde\pi):D_\mathrm{KL}(\gamma\|K^{\varepsilon,r})<\infty\} \neq\emptyset.
\]
For $\gamma\in\mathcal{P}(\mathbf{S}_{\tilde\pi}\times\mathbf{S}_{\tilde\pi})$, write $\gamma_0:=(\mathrm{pr}_0)_{\#}\gamma$ and $\gamma_1:=(\mathrm{pr}_1)_{\#}\gamma$ for its two marginals.


The endpoint Schr\"odinger problem is:
\begin{equation}
\label{eq:static-endpoint-schrodinger-short}
\gamma^\varepsilon
\in
\arg\min_{\gamma\in\Pi(\tilde\pi,\tilde\pi)}
D_\mathrm{KL}\bigl(\gamma\,\|\,K^{\varepsilon,r}\bigr).
\end{equation}

We call each of the transition types that make up the jump kernel~\eqref{eq:generator} a \emph{jump channel}; for the queue of \Cref{ex:queue} the channels are the arrivals $x\to x+e_i$ and the service completions $x\to x-e_i$. The \emph{fully controlled} (free-rate) case is the one in which the control sets the rate of every channel, so that the Gibbs kernel factors across channels; in scheduling only the service channels are controlled. Once the dynamic problem reduces to \eqref{eq:static-endpoint-schrodinger-short}, the convergence of the endpoint-scaling iteration is classical.
\begin{proposition}[Convergence of endpoint scaling, fully controlled case]\label{prop:ipfp-convergence}
Suppose the endpoint-conditioned tilted reference remains admissible, as in the fully controlled case defined above; the regularized inner problem at a fixed $\tilde\pi$ then coincides with \eqref{eq:static-endpoint-schrodinger-short}. Under the positivity and finite-entropy hypotheses above, the endpoint-scaling iteration with $\eta=1$ is the iterative proportional fitting procedure for \eqref{eq:static-endpoint-schrodinger-short}, and its iterates converge to the unique minimizer $\gamma^\varepsilon$ \citep{csiszar1975,ruschendorf1993}. If moreover $K^{\varepsilon,r}$ {has a density with respect to $\tilde\pi\otimes\tilde\pi$ and the density is} bounded above and bounded below away from zero on ${\mathbf S_{\tilde\pi}\times\mathbf S_{\tilde\pi}}$ (as when $\mathbf S_{\tilde\pi}$ is finite), the convergence is linear in the Hilbert projective metric with a contraction factor $\theta<1$ depending only on those bounds \citep{franklinlorenz1989}, and the damped update $0<\eta<1$ contracts with factor $1-\eta(1-\theta)$.
\end{proposition}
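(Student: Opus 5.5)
The proof has three parts. First, reduce the dynamic regularized problem at fixed $\tilde\pi$ to the static endpoint problem \eqref{eq:static-endpoint-schrodinger-short}. Second, identify the scaling step with IPFP. Third, import the classical convergence results.

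\textbf{Reduction to the static problem.} Use the chain rule for relative entropy on path space. For any path law $P$ with endpoint marginals $(\tilde\pi,\tilde\pi)$,
\[
D_{\mathrm{KL}}(P\,\|\,R^{\varepsilon,r})=D_{\mathrm{KL}}\bigl((e_0,e_1)_\#P\,\|\,K^{\varepsilon,r}\bigr)+\int D_{\mathrm{KL}}\bigl(P^{xy}\,\|\,R^{\varepsilon,r,xy}\bigr)\,(e_0,e_1)_\#P(dx,dy),
\]
where the superscripts denote bridges (disintegrations given both endpoints). The path-level minimum is therefore attained by $P^*=\int R^{\varepsilon,r,xy}\,\gamma^\varepsilon(dx,dy)$, and its value equals the static value.

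The hypothesis of the proposition is exactly what connects this to the controlled problem. In the fully controlled case, the $h$-transform $P^*$ of the Markov reference is again Markov (\citealp{leonard2014}). Its jump rates are the reference rates multiplied by $\Psi(t,y)/\Psi(t,x)$. Because every channel rate is a free control, this tilting is realized by a control kernel $q$, so $P^*=P^q$ for an admissible $q$.

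By the identity recorded just before the proposition, $D_{\mathrm{KL}}(P^q\|R^{\varepsilon,r})$ equals the regularized objective up to the constant $\log Z$. Hence the inner problem and \eqref{eq:static-endpoint-schrodinger-short} have the same value and corresponding minimizers.

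\textbf{Scaling step as IPFP.} The Schrödinger system has solution $\gamma^\varepsilon(dx,dy)=\Phi(x)\Psi(y)K^{\varepsilon,r}(dx,dy)$. I would verify the following by the Feynman--Kac/Hopf--Cole substitution $F=e^{f/\varepsilon}$. Solving the soft HJB \eqref{eq:soft hjb} backward from $f(1)=\psi^{(m)}$ produces $F(t,x)=\mathbb E_{R}[e^{-\frac1\varepsilon\int_t^1c}\Psi^{(m)}(X_1)\mid X_t=x]$. In the fully controlled case the log-sum-exp Hamiltonian linearizes under this substitution. Running the Gibbs-kernel dynamics forward from $\tilde\pi$ then gives the endpoint coupling
\[
\gamma^{(m)}(dx,dy)=\tilde\pi(dx)\,\frac{\Psi^{(m)}(y)K^{\varepsilon,r}(dx,dy)}{\int\Psi^{(m)}(y')K^{\varepsilon,r}(dx,dy')},
\]
whose first marginal is $\tilde\pi$. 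This is the half-step of IPFP that enforces $\gamma_0=\tilde\pi$. Its second marginal is $\rho_1^{(m)}$. With $\eta=1$, the update $\Psi^{(m+1)}=\Psi^{(m)}\,d\tilde\pi/d\rho^{(m)}_1$ followed by the next forward pass is exactly the composition of the $\gamma_1$-projection and the $\gamma_0$-projection in Csiszár's alternating I-projections. Setting $\Psi^{(m+1)}:=0$ off $\mathbf S_{\tilde\pi}$ restricts everything to $\mathbf S_{\tilde\pi}\times\mathbf S_{\tilde\pi}$.

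\textbf{Classical convergence results.} Positivity of $K^{\varepsilon,r}$ on the support and the finite-entropy feasibility assumption are precisely the hypotheses of \citet{csiszar1975} and \citet{ruschendorf1993}. These give convergence of the IPFP iterates to the unique I-projection $\gamma^\varepsilon$, unique by strict convexity of $D_{\mathrm{KL}}$ on the convex set $\Pi(\tilde\pi,\tilde\pi)$.

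Under two-sided density bounds, the map $\Psi\mapsto\Psi^{(m+1)}$ is a composition of positive linear kernel maps and coordinatewise inversions. Birkhoff's theorem (\citealp{franklinlorenz1989}) makes it a strict contraction in the Hilbert metric $d_H$, with factor $\theta<1$ determined by the projective diameter of the kernel. For damped steps, the update in log form is $\log\Psi^{(m+1)}=(1-\eta)\log\Psi^{(m)}+\eta\,T(\log\Psi^{(m)})$, where $T$ denotes the full step. Because $d_H$ is an oscillation seminorm of log-ratios, the triangle inequality bounds the distance to the fixed point by $(1-\eta)+\eta\theta=1-\eta(1-\theta)$ times the previous distance.

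\textbf{Main obstacle.} I expect the hardest step to be the rigorous identification of the dynamic forward--backward scaling with IPFP. This requires the soft HJB to linearize exactly under $F=e^{f/\varepsilon}$, which holds only when every channel is controlled. It also requires the $h$-transformed bridge to stay in the admissible class, which needs integrability of $\Psi$ against the reference so the rates are finite. I would prove that step on finite $\mathbf S_{\tilde\pi}$ first and then pass to the general case by the finite-entropy assumption.
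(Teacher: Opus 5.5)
Your plan follows the paper's proof essentially step for step. It uses the endpoint chain rule for $D_{\mathrm{KL}}(P\,\|\,R^{\varepsilon,r})$ with the bridge term removed by the admissibility hypothesis, and reads the forward pass and the terminal rescaling as alternating I-projections onto $\{\gamma_0=\tilde\pi\}$ and $\{\gamma_1=\tilde\pi\}$. It then takes convergence from Csisz\'ar and R\"uschendorf, the Hilbert-metric contraction from Franklin--Lorenz, and the convex-combination/oscillation-seminorm bound $1-\eta(1-\theta)$ for damping. Your explicit half-step coupling $\gamma^{(m)}$, and your description of the $\eta=1$ map $\Psi^{(m)}\mapsto\Psi^{(m+1)}$ as positive kernel operators composed with coordinatewise inversions, are more concrete than the paper, which simply asserts these identifications.

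The one step that would fail as you describe it is the algebraic Hopf--Cole verification. The paper's soft Hamiltonian is $\mathcal H^\varepsilon f=\varepsilon\log\int_{\mathcal U}e^{(\mathcal G^uf-c)/\varepsilon}\,r_{t,x}(du)$ with $\mathcal G^uf(x)=\int_{\mathcal S}(f(y)-f(x))\,q(x,dy;u)$. The substitution $F=e^{f/\varepsilon}$ turns the soft HJB into
\[
\partial_tF(t,x)=-F(t,x)\,\log\int_{\mathcal U}\exp\Big(\int_{\mathcal S}\log\frac{F(t,y)}{F(t,x)}\,q(x,dy;u)\Big)\,e^{-c(t,x,u)/\varepsilon}\,r_{t,x}(du).
\]
This is not linear in $F$, even when every channel is controlled. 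Linearity would force the rates $(q(x,y;u))_y$ to be independent Poisson variables under $e^{-c/\varepsilon}r_{t,x}$, which is impossible for nondegenerate bounded rates on a compact $\mathcal U$.

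The linear Feynman--Kac equation you want comes from a different penalty: a relative-entropy \emph{rate} $\varepsilon\sum_y[\hat q\log(\hat q/\bar r)-\hat q+\bar r]$ on the effective intensities $\hat q(x,y)$ relative to reference intensities $\bar r(x,y)$. Its Legendre dual is $\varepsilon\sum_y\bar r(x,y)\big(e^{(f(y)-f(x))/\varepsilon}-1\big)$. The paper's penalty is a KL on the action kernel, which does not produce this.

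The paper makes no such verification. It lets the admissibility hypothesis, together with the path-space identity stated before the proposition, carry the identification. By the Gibbs variational principle, the control problem solved by the backward pass from $\psi^{(m)}$ is then maximized by the $h$-transform with $dP/dR^{\varepsilon,r}\propto\Psi^{(m)}(X_1)$ given $X_0$. The Gibbs-kernel dynamics is therefore that $h$-transform, and its endpoint coupling from $\tilde\pi$ is your $\gamma^{(m)}$. If you argue this way, note that it requires the $h$-transform to be admissible at every iterate $\Psi^{(m)}$, not only at the optimizer $\gamma^\varepsilon$ used in your reduction step.
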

\begin{proof}
Conditioning on the endpoints decomposes the relative entropy as
\[
D_\mathrm{KL}\big(P\,\|\,R^{\varepsilon,r}\big)=D_\mathrm{KL}\big(\gamma\,\|\,K^{\varepsilon,r}\big)+\mathbb{E}_\gamma\Big[D_\mathrm{KL}\big(P(\cdot\mid X_0,X_1)\,\|\,R^{\varepsilon,r}(\cdot\mid X_0,X_1)\big)\Big],
\]
and under the stated admissibility the bridge term vanishes at the optimum within the admissible class, reducing the dynamic problem to \eqref{eq:static-endpoint-schrodinger-short}. The two steps of the iteration are then alternating I-projections onto the marginal constraints $\{\gamma:\gamma_0=\tilde\pi\}$ and $\{\gamma:\gamma_1=\tilde\pi\}$, where the I-projection onto a convex set of measures is the minimizer over that set of the relative entropy to $K^{\varepsilon,r}$. The forward pass re-imposes the initial marginal, and the terminal rescaling imposes the final one. Convergence of alternating I-projections under the two hypotheses is \citet{csiszar1975} (see also \citealt{ruschendorf1993}), and the Hilbert-metric contraction under two-sided kernel bounds is \citet{franklinlorenz1989}.
{For the damped update, write $\phi=\log\Psi$. The damped update is the convex combination $\phi^{(m+1)}=(1-\eta)\,\phi^{(m)}+\eta\,\phi^{(m)}_{\mathrm{IPF}}$, where $\phi^{(m)}_{\mathrm{IPF}}$ is the $\eta=1$ step, and the Hilbert projective metric $d(\Psi,\Psi^\star)$ is the oscillation seminorm of $\log\Psi-\log \Psi^\star$. By the triangle inequality and homogeneity of a seminorm, and the contraction $d(\Psi^{(m)}_{\mathrm{IPF}},\Psi^\star)\le\theta\,d(\Psi^{(m)},\Psi^\star)$ with $\Psi^\star$ the fixed point, $d(\Psi^{(m+1)},\Psi^\star)\le(1-\eta)\,d(\Psi^{(m)},\Psi^\star)+\eta\theta\,d(\Psi^{(m)},\Psi^\star)=\big(1-\eta(1-\theta)\big)\,d(\Psi^{(m)},\Psi^\star)$.} 
\end{proof}
\begin{remark}[Scope]\label{rem:ipfp-scope}
The admissibility hypothesis of \Cref{prop:ipfp-convergence} is a genuine restriction. In the decomposition displayed in the proof, the bridge term vanishes only at the cost-tilted reference path law $R^{\varepsilon,r}$ conditioned on its endpoints, which is a Doob $h$-transform of $R^{\varepsilon,r}$: every jump rate $r(x,y)$ is multiplied by the ratio $h(t,y)/h(t,x)$. For the scheduling problem, this modifies the arrival rates as well as the service rates, whereas an admissible control reallocates service while leaving the arrival rates fixed. The conditioned law is therefore not attainable in general; the bridge term cannot be driven to zero, and the value of the dynamic problem may strictly exceed that of \eqref{eq:static-endpoint-schrodinger-short}. In this partially controlled case we do not claim the classical guarantee. The computations of \Cref{sec:numerical} do not rely on it, as they solve the periodic dual of \Cref{sec:periodic-bvp} directly by residual collocation.
\end{remark}

\section{The Outer Problem as a Periodic Boundary-Value Problem: Untruncated Solution at Scale}\label{sec:periodic-bvp}

\Cref{sec:entropy-reg} resolved the \emph{inner} problem (the entropy-regularized self-transport bridge at a \emph{fixed} relaxed marginal $\tilde\pi$) through its forward--backward system and the endpoint-scaling iteration. We observe that the dual already established in \Cref{sec:bar-sot} makes the outer optimization a \emph{periodic boundary-value problem} (BVP) for the BAR multiplier alone. 

\subsection{The Optimal Dual is Time-Periodic}\label{sec:periodic-dual}

It is tempting to attack the meta-SOT problem~\eqref{eq:meta-SOT} by block alternation: fix the control, update $\tilde\pi$, and repeat. This is ill-posed, however, since $\tilde\pi$ enters the relaxed marginal-measure problem only through the self-transport endpoint constraint $\rho_0 = \rho_T = \tilde\pi$ and carries no running-cost gradient. Therefore, an ``update $\tilde\pi$'' step has no variational content, and it merely projects $\tilde\pi$ onto the current endpoint marginal $\rho_T$, the stationary law of the \emph{current}, and not the optimal, policy.

Now, observe that weak duality (\Cref{lem:weak-duality}), taken on the horizon $[0,T]$, gives
\begin{equation}\label{eq:horizon-T-dual}
    g_T(\tilde\pi) \;\geq\; \sup_{f\in\mathcal F}\,\langle f_T - f_0,\,\tilde\pi\rangle,
\end{equation}
with equality at $\tilde\pi^*$, where \Cref{thm:ansatz} shows the supremum is attained at the lift ansatz~\eqref{eq:ansatz}, $f^*(t,x) = h(x) + (t-T)g^*$, with $h$ the Poisson potential of~\eqref{eq:Poisson}. The endpoint gap of the optimal dual is therefore
\begin{equation}\label{eq:periodic-dual}
    f^*_T(x) - f^*_0(x) \;=\; T g^*, \qquad \text{independent of } x.
\end{equation}
The optimal BAR multiplier is \emph{time-periodic up to an additive constant}. In particular, its non-constant part, i.e., the Poisson potential $h$, returns to itself over $[0,T]$. At the same time, the constant $Tg^*$ is the meta-SOT value, recovered as $\langle f^*_T - f^*_0,\,\tilde\pi\rangle$ paired against any probability measure. This is the dual content of $g^*_T = Tg^*$ and of \Cref{thm:ergodic-value}.

Equation~\eqref{eq:periodic-dual} removes $\tilde\pi$ from the outer optimization. Because the endpoint gap of $f^*$ is constant in $x$, its pairing $\langle f^*_T-f^*_0,\tilde\pi\rangle = Tg^*$ is the same for every $\tilde\pi$. By \Cref{thm:ansatz}(iii), this constant is a lower bound on $g_T(\tilde\pi)$ for every admissible $\tilde\pi$ and equals $g_T(\tilde\pi^*)$; the meta-infimum is therefore $Tg^*$, and the optimization over $\tilde\pi$ contributes nothing beyond that constant. Equation~\eqref{eq:periodic-dual} resolves difficulty (c) of \Cref{sec:optimality-system}, since the endpoint law $\tilde\pi$ no longer appears as a variable. Difficulty (a) is not removed but changes form. Observe that the rate $g^*$ is no longer solved for as a separate eigenvalue but is read from the endpoint gap of the periodic dual; the eigenvalue structure of \Cref{sec:optimality-system} survives as the periodicity condition, which a dual satisfies for exactly one value of the constant. The outer problem is thus solved by finding the dual $f^*$ whose endpoint gap is $x$-independent, viz., a \emph{periodic boundary-value problem}. Concretely, $f^*$ solves the entropy-regularized backward soft Hamilton--Jacobi equation of \Cref{sec:entropy-reg},
\begin{equation}\label{eq:periodic-bvp}
    \partial_t f_t(x) + \mathcal H^\varepsilon{f_t}(x) = 0 \quad (0 < t < T),
    \qquad f_T - f_0 = \text{const in } x,
\end{equation}
in which the fixed terminal condition $f_T = \psi$ of the inner iteration is replaced by the periodicity constraint. Note that the soft Hamiltonian $\mathcal H^\varepsilon f$ of \Cref{sec:entropy-reg} already carries the running cost $c$ and the arrival drift inside the tilted log-partition $\varepsilon\log\mathcal{Z}^\varepsilon f$, so~\eqref{eq:periodic-bvp} carries no separate $-c$ term. Write $f^\varepsilon$ for a solution of~\eqref{eq:periodic-bvp}. The optimal control is then the Gibbs kernel $q^\varepsilon_{f^\varepsilon}$ of \Cref{sec:entropy-reg}, and the endpoint law is the stationary law $\tilde\pi^*_\varepsilon$ of the chain that this kernel controls. The entropy regularization enters~\eqref{eq:periodic-bvp} only through $\mathcal H^\varepsilon f$. The periodicity boundary condition is inherited unchanged from the unregularized dual, so~\eqref{eq:periodic-dual} holds at every $\varepsilon>0$ with $g^*$ replaced by the regularized rate $g^*_\varepsilon$, the constant $T^{-1}(f^\varepsilon_T-f^\varepsilon_0)$ read from the solution. Since $f^\varepsilon$ is soft-feasible, the consistency theorems of the next subsection give $g^*_\varepsilon-g^*\le\mathcal{O}(\varepsilon\log\tfrac1\varepsilon)$; the matching lower bound $g^*_\varepsilon\ge g^*$ holds whenever $f^\varepsilon$ attains the supremum over the soft-feasible class.

\subsection{Consistency of the Regularized Value}
In this subsection we show that the value of the entropy-regularized dual converges to the unregularized value $g^*T$ as $\varepsilon\to0$, and we quantify the rate. The convergence is at the level of the $\tilde\pi^*$-weighted endpoint gap; we do not claim convergence of the maximizers themselves. The pointwise limit of the soft Hamiltonian is the starting point. From the Laplace--Varadhan theorem, we have:
\begin{equation*}
    \lim_{\varepsilon\to0}\mathcal{H}^\varepsilon f(t,x) = \mathcal{H}f(t,x),
\end{equation*}
for each fixed $f$ and $(t,x)$, the supremum in $\mathcal{H}f$ taken over the support of $r_{t,x}$. The pointwise limit makes the result unsurprising. It is still not obvious that the optimal value over the soft-feasible class defined below converges to $g^*T$, the value attained by the solution of the HJB \eqref{eq:HJB}. We require the following assumption on the action set $\mathcal{U}$, the reference measure $r_{t,x}$, and the cost function $c(t,x,u)$.
\begin{assumption}\label{assump:smooth-density}
    $\mathcal{U}\subset\mathbb{R}^k$ is compact with nonempty interior. The reference measure $r_{t,x}$ admits a continuous density $\rho_{t,x}$ with respect to Lebesgue measure on $\mathcal{U}$, bounded above and bounded below away from zero{, and uniformly equicontinuous in $u$,} uniformly in $(t,x)$. The map $u\mapsto c(t,x,u)$ is $C^3$ on $\mathcal{U}$ with $mI\preceq\partial_u^2c(t,x,u)\preceq MI$ for constants $0<m\le M<\infty$ uniform in $(t,x)$, and $u\mapsto\mathcal{G}^uf(t,x)$ is affine (\Cref{assump:affine}).
\end{assumption}
The assumption is tailored to continuous action sets with interior optimizers, as in rate control. The scheduling example, whose maximizer sits at a vertex, is covered instead by the finite-action estimate of \Cref{thm:finite-action-consistency}.

We now state the estimate:
\begin{theorem}[Continuous-action consistency]\label{thm:cont-action-consistency}
    Let the hypotheses of \Cref{thm:ansatz}, \Cref{lem:weak-duality}, and \Cref{assump:smooth-density} hold, and let $\tilde\pi^*$ be the invariant law of $u^{**}$. Define the soft-feasible class
    \begin{equation}\label{eq:soft-class}
        \mathcal{F}^\text{soft} := \Big\{f:[0,T]\times\mathcal{S}\to\mathbb{R}\ :\ f(\cdot,x)\in C^1,\ \ \sup_{t}\|f(t,\cdot)\|_V+\sup_{t}\|\partial_tf(t,\cdot)\|_V<\infty,\ \ \partial_tf + \mathcal{H}^\varepsilon f\le 0\Big\},
    \end{equation}
    and suppose $f^\varepsilon$ attains the supremum of $\langle f_T-f_0,\tilde{\pi}^*\rangle$ over $\mathcal{F}^\text{soft}$. Suppose further that there is $d_0>0$ such that, for $f\in\{f^*,f^\varepsilon\}$ and every $(t,x)$, the maximizer of $u\mapsto\mathcal{G}^uf(t,x)-c(t,x,u)$ lies at distance at least $d_0$ from the boundary of $\mathcal{U}$. Here $f^*=h+(t-T)g^*$ is the ansatz of \Cref{thm:ansatz}. Then, for fixed $T>0$,
    \begin{equation}
        0\ \le\ \langle f^\varepsilon_T-f^\varepsilon_0,\tilde{\pi}^*\rangle - g^*T\ =\ \mathcal{O}\Big(T\varepsilon\log\tfrac{1}{\varepsilon}\Big),
    \end{equation}
    as $\varepsilon\to0$.
\end{theorem}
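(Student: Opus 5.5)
Two one-sided bounds. The first compares the soft and hard Hamiltonians pointwise, uniformly in $(t,x)$. The second pairs the resulting HJB inequalities against the stationary law $\tilde\pi^*$, exactly as in the proof of \Cref{thm:ansatz}(ii). Both reduce to a Laplace-type estimate for the log-partition.

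\emph{Step 1 (sandwich for $\mathcal H^\varepsilon$).} Fix $f\in\{f^*,f^\varepsilon\}$ and $(t,x)$, and set $\phi(u):=\mathcal G^uf(t,x)-c(t,x,u)$. By \Cref{assump:affine} and \Cref{assump:smooth-density}, $\phi$ is $C^3$ and strongly concave, with $-MI\preceq\partial_u^2\phi\preceq -mI$ and maximizer $u^\circ$ at distance $\ge d_0$ from $\partial\mathcal U$. The upper bound is immediate:
\[
\mathcal H^\varepsilon f=\varepsilon\log\int e^{\phi/\varepsilon}\rho_{t,x}\,du\le \mathcal Hf+\varepsilon\log(\bar\rho|\mathcal U|).
\]
For the lower bound, restrict the integral to the ball $B(u^\circ,\min(d_0,\sqrt\varepsilon))\subset\mathcal U$. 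There $\phi\ge\mathcal Hf-\tfrac M2|u-u^\circ|^2$, and the density is at least $\underline\rho>0$. This gives
\[
\mathcal H^\varepsilon f\ge \mathcal Hf-\tfrac{k}{2}\varepsilon\log\tfrac1\varepsilon-C\varepsilon .
\]
A Gaussian Laplace expansion would improve this to an additive $-\tfrac k2\varepsilon\log(2\pi\varepsilon/M)$, but the crude version suffices. Altogether $|\mathcal H^\varepsilon f-\mathcal Hf|\le C_0\,\varepsilon\log\tfrac1\varepsilon$ for small $\varepsilon$, with $C_0$ depending only on $k,m,M,d_0$ and the density bounds, and not on $f$, $t$ or $x$. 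This uniformity is the main point to check.

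\emph{Step 2 (lower bound $\ge g^*T$).} Set $\tilde f:=f^*-C_0\varepsilon\log\tfrac1\varepsilon\,(t-T)$. Generators annihilate functions of $t$ alone, so $\mathcal H^\varepsilon\tilde f=\mathcal H^\varepsilon f^*$. Step 1 then gives
\[
\partial_t\tilde f+\mathcal H^\varepsilon\tilde f\le \partial_tf^*+\mathcal Hf^*=0 .
\]
Hence $\tilde f\in\mathcal F^{\rm soft}$: its growth is that of $f^*$, controlled by $\|h\|_V<\infty$. Since $f^\varepsilon$ attains the supremum over $\mathcal F^{\rm soft}$,
\[
\langle f^\varepsilon_T-f^\varepsilon_0,\tilde\pi^*\rangle\ge\langle\tilde f_T-\tilde f_0,\tilde\pi^*\rangle=g^*T+C_0T\varepsilon\log\tfrac1\varepsilon\ge g^*T .
\]
This already gives the claimed $0\le$. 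The shift used here is exactly the allowance that the upper estimate must absorb.

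\emph{Step 3 (upper bound).} By Step 1 and soft-feasibility of $f^\varepsilon$,
\[
\partial_tf^\varepsilon+\mathcal Hf^\varepsilon\le \partial_tf^\varepsilon+\mathcal H^\varepsilon f^\varepsilon+C_0\varepsilon\log\tfrac1\varepsilon\le C_0\varepsilon\log\tfrac1\varepsilon .
\]
Set $w:=f^*-f^\varepsilon$. Since $u^{**}$ attains $\mathcal Hf^*$ and is merely feasible in $\mathcal Hf^\varepsilon$, the argument of \Cref{thm:ansatz}(ii) yields $\partial_tw+\mathcal G^{u^{**}}w\ge -C_0\varepsilon\log\tfrac1\varepsilon$. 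Pair this against the constant flow $\rho_t\equiv\tilde\pi^*$, which is invariant for $u^{**}$, using the weak BAR \eqref{eq:weak FP} on $[0,T]$. This gives
\[
\langle w_T-w_0,\tilde\pi^*\rangle\ge -C_0T\varepsilon\log\tfrac1\varepsilon,
\]
that is, $\langle f^\varepsilon_T-f^\varepsilon_0,\tilde\pi^*\rangle\le g^*T+C_0T\varepsilon\log\tfrac1\varepsilon$.

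\emph{Obstacles.} The identity \eqref{eq:weak FP} must be applied to the unbounded $w$, which lies in the weighted class. This is done by the truncation $f\chi_n$ of \Cref{lem:weak-duality}, using the compatibility bound and $\langle V,\tilde\pi^*\rangle<\infty$; it is routine. The step I expect to be the main obstacle is the uniformity in Step 1 of the lower bound. It needs the interior-distance hypothesis $d_0$ for both $f^*$ and $f^\varepsilon$ to keep the integration ball inside $\mathcal U$, and the uniform curvature bound $M$ to control the loss on that ball. Equicontinuity of the density only enters if one wants the sharp Laplace constant.
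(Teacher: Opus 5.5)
\paragraph{Gap in Step 2.} Your Step 2 has a sign error, and it leaves the nonnegativity part of the statement unproved. Write $L_\varepsilon:=\log\frac1\varepsilon$. Your shifted function is $\tilde f(t,x)=h(x)+(t-T)\,(g^*-C_0\varepsilon L_\varepsilon)$, so $\tilde f_T-\tilde f_0=Tg^*-C_0T\varepsilon L_\varepsilon$, not $Tg^*+C_0T\varepsilon L_\varepsilon$.

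This cannot be repaired by shifting the other way. If all you know is $\mathcal H^\varepsilon f^*\le\mathcal Hf^*+C_0\varepsilon L_\varepsilon$, then making $f^*$ soft-feasible forces you to lower the slope $\partial_tf$, and lowering the slope lowers the endpoint gap. Your argument therefore yields only $\langle f^\varepsilon_T-f^\varepsilon_0,\tilde\pi^*\rangle\ge g^*T-C_0T\varepsilon L_\varepsilon$. Combined with Step 3, that is a two-sided $\mathcal O(T\varepsilon\log\frac1\varepsilon)$ estimate. The claimed lower bound $0\le\langle f^\varepsilon_T-f^\varepsilon_0,\tilde\pi^*\rangle-g^*T$ does not follow.

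\paragraph{The missing observation.} Your upper bound in Step 1 is lossy. Since $\rho_{t,x}$ is a probability density,
$\int_{\mathcal U}e^{\phi/\varepsilon}\rho_{t,x}\,du\le e^{\sup_{\mathcal U}\phi/\varepsilon}\int_{\mathcal U}\rho_{t,x}\,du=e^{\mathcal Hf/\varepsilon}$.
Hence $\mathcal H^\varepsilon f\le\mathcal Hf$ exactly, with no $\varepsilon\log(\bar\rho|\mathcal U|)$ term. Note that $\bar\rho|\mathcal U|\ge1$, so your bound is never sharper than this one.

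This exact inequality is what the paper uses. It gives $\partial_tf^*+\mathcal H^\varepsilon f^*\le\partial_tf^*+\mathcal Hf^*=0$, so $f^*$ itself lies in $\mathcal F^{\mathrm{soft}}$ without any shift. Optimality of $f^\varepsilon$ then gives the lower bound $g^*T$ directly.

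\paragraph{What is sound.} With that correction, the rest of your proof works. Your Step 3 is the paper's upper-bound step: you pair $\partial_tw+\mathcal G^{u^{**}}w\ge-C_0\varepsilon L_\varepsilon$ against the invariant flow $\tilde\pi^*$, using soft-feasibility of $f^\varepsilon$ and the interior-maximizer hypothesis for $f^\varepsilon$.

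Your ball-restriction lower bound on $\mathcal H^\varepsilon f$ is a valid and more elementary substitute for the paper's Laplace expansion. It uses only the curvature bound $M$, the density lower bound and $d_0$, not $C^3$ regularity, the constant $m$, or equicontinuity of the density. It still yields the uniform $\mathcal O(\varepsilon\log\frac1\varepsilon)$ Hamiltonian gap that the upper bound needs.
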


\begin{proof}

Throughout, $f^*=h+(t-T)g^*$ is the ansatz of \Cref{thm:ansatz}, so $\langle f^*_T-f^*_0,\tilde\pi^*\rangle=g^*T$ and $(\mathcal{G}^{u^{**}})^*\tilde\pi^*=0$; pairings of weighted functions against $\tilde\pi^*$ extend from bounded functions by the truncation argument of \Cref{lem:weak-duality}, using $\langle V,\tilde\pi^*\rangle<\infty$.

\emph{Step 1 (two-sided Laplace estimate).} Fix $(t,x)$ and $f\in\{f^*,f^\varepsilon\}$, and write $\Phi(u):=\mathcal{G}^uf(t,x)-c(t,x,u)$. Since $r_{t,x}$ is a probability measure, $\int_{\mathcal{U}} e^{\Phi/\varepsilon}\,dr_{t,x}\le e^{\sup_{\mathcal{U}}\Phi/\varepsilon}$, whence $\mathcal{H}^\varepsilon f\le\mathcal{H}f$ pointwise. Under \Cref{assump:smooth-density}, $\Phi$ is $C^3$ and strongly concave, with $-MI\preceq\partial_u^2\Phi\preceq-mI$, so its maximizer $u^*$ is unique and interior, and Laplace's method \citep[Ch.~IX]{wong2001asymptotic} gives
\[
  \mathcal{H}^\varepsilon f(t,x) = \mathcal{H}f(t,x) + \tfrac{\varepsilon k}{2}\log(2\pi\varepsilon)
  - \tfrac{\varepsilon}{2}\log\det\big(-\partial_u^2\Phi(u^*)\big)
  + \varepsilon\log\rho_{t,x}(u^*) + {o(\varepsilon)}.
\]
The uniform bounds on $\partial_u^2c$, on $\rho_{t,x}$, and on the distance of $u^*$ from the boundary{, together with the equicontinuity of $\rho_{t,x}$,} make the error terms uniform in $(t,x)$. Hence
\[
  0\ \le\ \mathcal{H}f(t,x)-\mathcal{H}^\varepsilon f(t,x)\ =\ \tfrac{\varepsilon k}{2}\log\tfrac{1}{2\pi\varepsilon}+\mathcal{O}(\varepsilon)\ =:\ \delta_\varepsilon(t,x)\ =\ \mathcal{O}\big(\varepsilon\log\tfrac1\varepsilon\big),
\]
uniformly in $(t,x)$.

\emph{Step 2 (upper bound).} Set $w^\varepsilon:=f^\varepsilon-f^*$. Soft-feasibility of $f^\varepsilon$ and the HJB for $f^*$ give
\[
  \partial_tw^\varepsilon + \mathcal{H}^\varepsilon f^\varepsilon-\mathcal{H}f^*\ \le\ 0 .
\]
Observe that $\mathcal{H}f^\varepsilon\ge\mathcal{G}^{u^{**}}f^\varepsilon-c(\cdot,u^{**})$, while $\mathcal{H}f^*=\mathcal{G}^{u^{**}}f^*-c(\cdot,u^{**})$ by the definition of $u^{**}$, so $\mathcal{H}f^\varepsilon-\mathcal{H}f^*\ge\mathcal{G}^{u^{**}}w^\varepsilon$. Adding and subtracting $\mathcal{H}f^\varepsilon$ and applying Step 1,
\[
  \partial_tw^\varepsilon + \mathcal{G}^{u^{**}}w^\varepsilon\ \le\ \mathcal{H}f^\varepsilon-\mathcal{H}^\varepsilon f^\varepsilon\ =\ \delta_\varepsilon .
\]
Pairing against the constant flow $\rho_t\equiv\tilde\pi^*$, admissible by the invariance of $\tilde\pi^*$, and integrating over $[0,T]$,
\[
  \langle w^\varepsilon_T-w^\varepsilon_0,\tilde\pi^*\rangle\ \le\ T\,\sup_{t,x}\delta_\varepsilon\ =\ \mathcal{O}\big(T\varepsilon\log\tfrac1\varepsilon\big),
\]
and since $\langle f^*_T-f^*_0,\tilde\pi^*\rangle=g^*T$, this reads $\langle f^\varepsilon_T-f^\varepsilon_0,\tilde\pi^*\rangle-g^*T\le\mathcal{O}(T\varepsilon\log\tfrac1\varepsilon)$.

\emph{Step 3 (lower bound).} By Step 1, $\partial_tf^*+\mathcal{H}^\varepsilon f^*=\mathcal{H}^\varepsilon f^*-\mathcal{H}f^*\le0$, so $f^*\in\mathcal{F}^\text{soft}$. Since $f^\varepsilon$ attains the supremum over $\mathcal{F}^\text{soft}$,
\[
  \langle f^\varepsilon_T-f^\varepsilon_0,\tilde\pi^*\rangle\ \ge\ \langle f^*_T-f^*_0,\tilde\pi^*\rangle\ =\ g^*T .
\]
Steps 2 and 3 sandwich the regularized value, proving the claim. In particular, entropy regularization \emph{over}-estimates the value, consistent with $\mathcal{H}^\varepsilon\le\mathcal{H}$ enlarging the soft-feasible set. 

\end{proof}

The next theorem shows that the gap between the optimal values of the HJB and the soft HJB is $\mathcal{O}(\varepsilon)$ as $\varepsilon\to0$ when both the set of admissible controls and the state space are discrete, in the setting of \Cref{ex:queue}.

\begin{theorem}[Finite-action consistency]\label{thm:finite-action-consistency}
    Consider the single-server $k$-class scheduling problem on $\mathcal S=\mathbb Z^k_+$: finite action set $\mathcal{U}= \{\mu_ie_i\}_{i=1}^k$, state-only cost $c(x)$, and the reference kernel $r_{t,x}\in\mathcal{P}(\mathcal{U})$ uniform over the feasible actions. Let $F(x):=\{i:x_i>0\}$ be the set of nonempty classes and $n(x):=|F(x)|$; the reference places mass $1/n(x)$ on each $\mu_ie_i$ with $i\in F(x)$, and at $x=0$ the server idles. The soft Hamiltonian acts on the set $\mathcal{F}^\text{soft}$ defined in \Cref{thm:cont-action-consistency}. With the control-independent cost terms omitted, the soft and hard Hamiltonians are, for $x\neq0$,
    \[
    \mathcal{H}^\varepsilon f(t,x) = \varepsilon\log\left(\frac{1}{n(x}\sum_{i\in F(x)} e^{\mu_if_-^i(t,x)/\varepsilon}\right), \qquad \mathcal{H}f(t,x) = \max_{i\in F(x)}\mu_if_-^i(t,x),
    \]
    and $\mathcal{H}^\varepsilon f(t,0)=\mathcal{H}f(t,0)=0$; the restriction to $F(x)$ is the boundary indicator of the queue generator~\eqref{eq:queue-generator}, so $f$ is never evaluated outside $\mathbb Z^k_+$. Let the hypotheses of \Cref{thm:ansatz} and \Cref{lem:weak-duality} hold, the hypotheses of \Cref{thm:ansatz} taken under the convex-hull convention of \Cref{ex:queue}, with the stability condition $\rho=\sum_i\lambda_i/\mu_i<1$, and let $\tilde\pi^*$ be the invariant law of $u^{**}$. Suppose $f^\varepsilon$ attains the supremum of $\langle f_T-f_0,\tilde\pi^*\rangle$ over the soft-feasible class~\eqref{eq:soft-class}, with $\mathcal{H}^\varepsilon$ in the finite-action form above. Then, for every $\varepsilon>0$ and $T>0$,
\begin{equation}\label{eq:finite-action-bound}
  0\;\le\;\langle f^\varepsilon_T-f^\varepsilon_0,\tilde\pi^{*}\rangle - g^*T
   \;\le\; T\,\varepsilon\,\mathbb E_{\tilde\pi^*}\!\big[\log n(X)\big]\;\le\;T\,\varepsilon\log k,
\end{equation}
with the convention $\log n(0):=0$, so that the Hamiltonian gap $\mathcal{H}f-\mathcal{H}^\varepsilon f$ is zero at $x=0$.
\end{theorem}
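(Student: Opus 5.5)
The argument follows the three-step structure of the proof of \Cref{thm:cont-action-consistency}, with Laplace's method replaced by an elementary log-sum-exp sandwich; this sandwich is exact for every $\varepsilon$, not only asymptotically.

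\emph{Step 1 (pointwise gap).} Fix $(t,x)$ with $x\neq0$ and put $a_i:=\mu_if_-^i(t,x)$ for $i\in F(x)$. The soft Hamiltonian is
\[
\mathcal H^\varepsilon f=\varepsilon\log\Bigl(n(x)^{-1}\sum_{i\in F(x)}e^{a_i/\varepsilon}\Bigr).
\]
The sum lies between $e^{\max_i a_i/\varepsilon}$ and $n(x)\,e^{\max_i a_i/\varepsilon}$. This gives
\[
0\le\mathcal Hf(t,x)-\mathcal H^\varepsilon f(t,x)\le\varepsilon\log n(x)=:\delta_\varepsilon(x),
\]
with $\delta_\varepsilon(0)=0$ by the convention at the origin. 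The omitted control-independent terms ($\langle\lambda,f_+\rangle$ and $-c$) are common to both Hamiltonians and cancel. Two points need checking here. First, the hard Hamiltonian $\max_{i\in F(x)}\mu_if_-^i-c$ over the pure schedules agrees with $\sup$ over the convex hull, because the objective is linear in $u$; hence $u^{**}$ can be taken at a vertex. Second, \Cref{thm:ansatz} applies under the convex-hull convention, so $f^*=h+(t-T)g^*$ solves the hard HJB.

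\emph{Step 2 (lower bound).} By Step 1, $\partial_tf^*+\mathcal H^\varepsilon f^*\le\partial_tf^*+\mathcal Hf^*=0$. So $f^*\in\mathcal F^{\rm soft}$, its weighted growth being inherited from $\|h\|_V<\infty$. Since $f^\varepsilon$ attains the supremum over this class, $\langle f^\varepsilon_T-f^\varepsilon_0,\tilde\pi^*\rangle\ge\langle f^*_T-f^*_0,\tilde\pi^*\rangle=g^*T$.

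\emph{Step 3 (upper bound).} Set $w:=f^\varepsilon-f^*$. Soft-feasibility of $f^\varepsilon$ together with the HJB for $f^*$ gives $\partial_tw+\mathcal H^\varepsilon f^\varepsilon-\mathcal Hf^*\le0$. Since $u^{**}$ is feasible for $f^\varepsilon$ and optimal for $f^*$, we have $\mathcal Hf^\varepsilon-\mathcal Hf^*\ge\mathcal G^{u^{**}}w$. Adding and subtracting $\mathcal Hf^\varepsilon$ and applying Step 1 yields
\[
\partial_tw+\mathcal G^{u^{**}}w\le\mathcal Hf^\varepsilon-\mathcal H^\varepsilon f^\varepsilon\le\varepsilon\log n(x).
\]
Now pair with the constant flow $\rho_t\equiv\tilde\pi^*$. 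This flow is admissible because $\tilde\pi^*$ is $u^{**}$-invariant, exactly as in \Cref{thm:ansatz}(ii). Integrating over $[0,T]$ gives
\[
\langle w_T-w_0,\tilde\pi^*\rangle\le T\varepsilon\,\mathbb E_{\tilde\pi^*}[\log n(X)].
\]
The final inequality $\le T\varepsilon\log k$ follows from $n(x)\le k$. Combining this with Step 2 proves~\eqref{eq:finite-action-bound}.

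\emph{Main obstacle.} The delicate point is justifying the pairing in Step 3 for the unbounded function $w$. The weak BAR identity~\eqref{eq:weak FP} is stated for $UC_b^1$ test functions, whereas $w$ lies only in the $V$-weighted class. I would handle this with the truncation $w\chi(V/n)$ used in \Cref{lem:weak-duality}. Domination of $|\partial_t(w\chi_n)|+|\mathcal G^{u^{**}}(w\chi_n)|$ by a multiple of $1+V$ follows from the compatibility bound (which holds for the nearest-neighbor kernel with quadratic $V$). Then $\langle V,\tilde\pi^*\rangle<\infty$ and dominated convergence let us pass to the limit. The right-hand side $\varepsilon\log n$ is bounded, so its pairing causes no difficulty.
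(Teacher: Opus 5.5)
Your proposal is correct and follows the paper's proof essentially step for step. It uses the same exact log-sum-exp sandwich $0\le\mathcal Hf-\mathcal H^\varepsilon f\le\varepsilon\log n(x)$, the same observation that $f^*$ is a soft subsolution for the lower bound, and the same comparison $w=f^\varepsilon-f^*$ paired against the invariant flow $\tilde\pi^*$ via the truncation of \Cref{lem:weak-duality} for the upper bound. Your ``$u^{**}$ feasible for $f^\varepsilon$, optimal for $f^*$'' step is precisely the paper's convexity bound $H(f^\varepsilon_-)-H(f^*_-)\ge\langle u^{**},w^\varepsilon_-\rangle$.

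One small caveat, which the paper shares. Linearity in $u$ identifies the supremum over $\mathrm{conv}\{\mu_ie_i\}_{i\in[k]}$ with $\max_{i\in[k]}\mu_if_-^i\mathbf 1_{x_i>0}$, not with $\max_{i\in F(x)}\mu_if_-^i$, because empty-class vertices contribute the idling value $0$. Your Step~3 therefore tacitly uses that $u^{**}$ can be chosen work-conserving, i.e.\ in $\{\mu_ie_i:i\in F(x)\}$. This holds whenever $\max_{i\in F(x)}\mu_ih_-^i(x)\ge0$, as is typical for holding costs.
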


\begin{proof}
    Fix $x\neq0$ and write $F=F(x)$, $n=n(x)$; write $z_i=f^i_-(t,x)$ for $i\in F$, and the Hamiltonian operators acting on the functions as
    \[
    H^\varepsilon(x;z):=\mathcal{H}^\varepsilon f(t,x), \qquad H(x;z) := \mathcal{H}f(t,x),
    \]
    and set $M:= H = \max_{i\in F} \mu_iz_i$. At $x=0$ both Hamiltonians vanish and the gap below is zero, consistent with the convention $\log n(0)=0$.

    \noindent\emph{(i) $H^\varepsilon\le H$.} As $r_{t,x}$ is a probability measure and $e^{\mu_i z_i/\varepsilon}\le e^{M/\varepsilon}$, we have $H^\varepsilon=\varepsilon\log(\tfrac1n\sum_{i\in F}e^{\mu_i z_i/\varepsilon})\le\varepsilon\log(e^{M/\varepsilon})=M=H$.

    \noindent\emph{(ii) The gap.} Since $1\le\sum_{i\in F}e^{(\mu_i z_i-M)/\varepsilon}\le n$ (the largest term is $e^0=1$; there are $n$ terms, each $\le1$),
    \[
    0\;\le\;H(x;z)-H^\varepsilon(x;z)
    =\varepsilon\log n-\varepsilon\log\!\Big(\!\sum_{i\in F}e^{(\mu_i z_i-M)/\varepsilon}\Big)
    \;\le\;\varepsilon\log n(x)\;\le\;\varepsilon\log k,
    \]
    \emph{exactly}, for every $\varepsilon$. Call $x$ a \emph{switching state} if the maximum $M$ is attained by two or more indices of $F(x)$, and call the set of switching states the switching set; for $k=2$ it is the tie set $\Sigma$ of \Cref{ex:singular}. Off the switching set, let the \emph{runner-up margin} be $\delta(x):=M-\max\{\mu_iz_i:\ i\in F(x),\ \mu_iz_i<M\}>0$, with $\delta(x):=\infty$ when $n(x)=1$. Off the switching set the subtracted term is $\varepsilon\log\!\big(1+\mathcal O(e^{-\delta/\varepsilon})\big)$, so $H-H^\varepsilon=\varepsilon\log n(x)+\mathcal O(\varepsilon e^{-\delta/\varepsilon})$.

    \noindent\emph{(iii) Sandwich.} By (i), $H^\varepsilon\le H$, so the hard-optimal $f^*$ (the ansatz~\eqref{eq:ansatz}) is a soft \emph{sub}-solution, hence belongs to the soft-feasible class~\eqref{eq:soft-class}; thus $\langle f^\varepsilon_T-f^\varepsilon_0,\tilde\pi^*\rangle\ge\langle f^*_T-f^*_0,\tilde\pi^*\rangle=g^*T$. For the upper bound, set $w^\varepsilon=f^\varepsilon-f^*$, subtract the two HJBs, bound $H(f^\varepsilon_-)-H(f^*_-)\ge\langle u^{**},w^\varepsilon_-\rangle$ by convexity, and pair against the invariant flow $\rho_t\equiv\tilde\pi^*$ (weak BAR; the pairing extends to the weighted class by the truncation argument of \Cref{lem:weak-duality}, using $\langle V,\tilde\pi^*\rangle<\infty$) to obtain
    \[
    \langle f^\varepsilon_T-f^\varepsilon_0,\tilde\pi^{*}\rangle-g^*T
    \;\le\;\int_0^T\!\sum_x\big(H-H^\varepsilon\big)(x;f^\varepsilon_-)\,\tilde\pi^*(x)\,dt
    \;\le\;T\,\varepsilon\,\mathbb E_{\tilde\pi^*}\!\big[\log n(X)\big]\;\le\;T\,\varepsilon\log k,
    \]
    where the middle inequality follows from (ii). 
\end{proof}

\begin{remark}[Why $\mathcal O(\varepsilon)$, not $\mathcal O(\varepsilon\log\frac1\varepsilon)$]
     The continuous consistency theorem expands a log-partition around an \emph{interior, nondegenerate} maximizer, whose Laplace correction is $\frac{\varepsilon k}{2}\log(2\pi\varepsilon)$. The single-server problem has a \emph{finite} action set and a \emph{linear} Hamiltonian, so the maximizer sits at a vertex and the ``Laplace integral'' is a discrete sum. Observe from step (ii) of the proof of \Cref{thm:finite-action-consistency} that off the switching set the gap is $\varepsilon\log n(x)+\mathcal O(\varepsilon e^{-\delta/\varepsilon})$: the $\mathcal O(\varepsilon)$ rate is the reference normalization $1/n(x)$. \Cref{fig:consistency-rate} confirms $\mathbb E_{\tilde\pi}[H-H^\varepsilon]=\mathcal O(\varepsilon)$ over three orders of $\varepsilon$, with empirical slope $\mathbb E_{\tilde\pi}[\log n]$ lying well below $\log k$.
\end{remark} 

\begin{figure}[htb]
\centering
\includegraphics[width=0.6\linewidth]{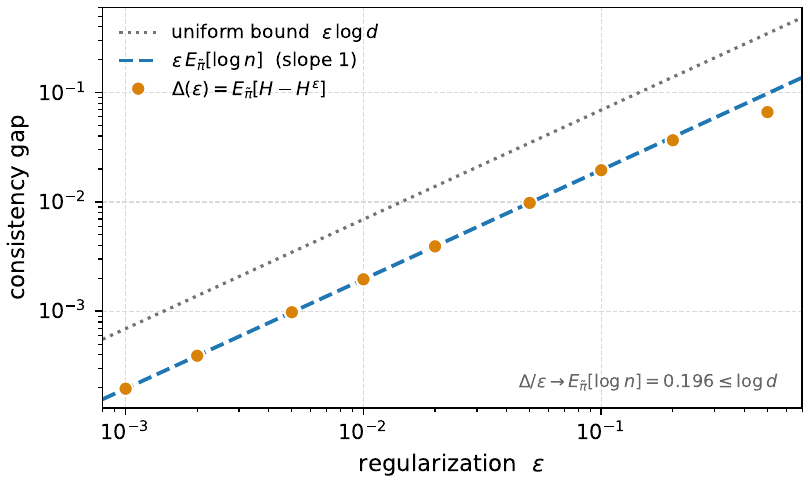}
\caption{{Finite-action consistency rate (single-server $c\mu$ regime, $\lambda=(0.5,0.5)$, $\mu=(1,1.5)$, $c=(1,1)$). The $\tilde\pi$-weighted Hamiltonian gap $\Delta(\varepsilon)=\mathbb E_{\tilde\pi}[H-H^\varepsilon]$ --- which bounds the consistency error in~\eqref{eq:finite-action-bound} --- is $\mathcal O(\varepsilon)$ with slope $\mathbb E_{\tilde\pi}[\log n]=0.196$, well under the uniform bound $\varepsilon\log k$, and with no $\log\frac1\varepsilon$ correction.}}
\label{fig:consistency-rate}
\end{figure}

 In \Cref{sec:numerical} next, we operationalize the periodic-BVP approach by solving it numerically using collocation \citep[see][]{betts2010practical,raissi2019pinn} of the soft-HJB residual under a neural parametrization of the dual, on scheduling multiclass reentrant networks and input-queued switches.

\section{Numerical Examples}\label{sec:numerical}

We demonstrate the framework on two scheduling problems under complementary scaling conditions. The first is a six-class reentrant network of \citet{daigluzman2022}, a standard heavy-traffic benchmark. As the system approaches full load, the part of the dual function that separates admissible actions becomes a vanishing fraction of its magnitude, and a neural network architecture that exploits Harrison's equivalent-workload representation makes the dual resolvable. The second is an input-queued switch, where the obstruction is the combinatorial size of the control space. The state is the full matrix of queue lengths and the control is a matching drawn from a super-exponentially sized set. We address this scale issue with a graph-attention parametrization of the dual function on the bipartite structure of the switch. In both problems, we solve the periodic dual of \Cref{sec:periodic-bvp}, take its entropy-regularized Gibbs control, and evaluate that control by long-run average holding cost in a single validated simulator against the strongest available baselines for either problem.

\subsection{Heavy Traffic and the Equivalent-Workload Architecture}\label{sec:heavy-traffic}

The periodic BVP~\eqref{eq:periodic-bvp} reduces long-run average-cost control to computing the dual function $f$, the optimal control (in the entropy-regularized setting) being the Gibbs kernel $q^\varepsilon_{f}$. This section stress-tests that program in the heavy-traffic regime, where it is both most useful and hardest, on a standard multiclass reentrant benchmark. Of particular interest is the neural parametrization of $f$ that makes it succeed. Specifically, we use a direct encoding of Harrison's \emph{equivalent workload formulation}~\citep{harrisonvanmieghem1997,harrison2000} into the neural architecture, and this inductive bias makes the inferred policy competitive with the state-of-the-art policies obtained using reinforcement learning~\citep{daigluzman2022}; once the sub-dominant transverse correction is resolved (\Cref{sec:deep-ht}), the dual improves on them.

\subsubsection{Benchmark and the heavy-traffic obstruction}\label{sec:ht-obstruction}

We use the six-class, two-station reentrant network of \citet{daigluzman2022} (\Cref{fig:network}), identical to the \texttt{reentrant} instance of the QGym benchmark suite~\citep{qgym2024}. Classes $\{1,2,3\}$ are served by station~$1$ and $\{4,5,6\}$ by station~$2$, with service rates $\mu=(\tfrac18,\tfrac12,\tfrac14,\tfrac16,\tfrac17,1)$, exogenous Poisson arrivals to classes~$1$ and~$3$ at rate $9/140$, and deterministic routing $1\!\to\!4\!\to\!2\!\to\!5\!\to\text{out}$ and $3\!\to\!6\!\to\text{out}$. The holding cost is the number in system, $c(x)=\sum_k x_k$. Both stations then have traffic intensity $\rho=0.9$.

\begin{figure}[htb]
\centering
\begin{tikzpicture}[>=stealth,
  buf/.style={draw, thick, minimum size=7mm, rounded corners=1.5pt},
  s1/.style={buf, fill=blue!10}, s2/.style={buf, fill=red!10},
  rt/.style={->, thick}]
  \node[s1] (b1) at (0,1.4) {$1$};
  \node[s2] (b4) at (2,1.4) {$4$};
  \node[s1] (b2) at (4,1.4) {$2$};
  \node[s2] (b5) at (6,1.4) {$5$};
  \node[s1] (b3) at (2,0)   {$3$};
  \node[s2] (b6) at (4,0)   {$6$};
  \draw[rt] (-1.4,1.4) -- (b1) node[midway,above,font=\scriptsize]{$\lambda$};
  \draw[rt] (b1) -- (b4);
  \draw[rt] (b4) -- (b2);
  \draw[rt] (b2) -- (b5);
  \draw[rt] (b5) -- ++(1.4,0) node[right,font=\scriptsize]{out};
  \draw[rt] (0.6,0) -- (b3) node[midway,above,font=\scriptsize]{$\lambda$};
  \draw[rt] (b3) -- (b6);
  \draw[rt] (b6) -- ++(1.4,0) node[right,font=\scriptsize]{out};
  \foreach \n/\m in {b1/{\frac18}, b2/{\frac12}, b3/{\frac14}, b4/{\frac16}, b5/{\frac17}, b6/{1}}
     \node[font=\scriptsize] at ([yshift=-6.5mm]\n) {$\mu=\m$};
\end{tikzpicture}
\caption{The six-class reentrant network of \citet{daigluzman2022}. Jobs route $1\to4\to2\to5\to\text{out}$ and $3\to6\to\text{out}$, with service rates $\mu$ below each buffer and exogenous Poisson arrivals at rate $\lambda=9/140$. Station~$1$ serves buffers $\{1,2,3\}$ (blue) and station~$2$ serves $\{4,5,6\}$ (red); each station is thus a shared resource visited more than once along a route, the defining feature of a reentrant line and the source of the workload coupling in~\eqref{eq:workload-matrix}.}
\label{fig:network}
\end{figure}

The difficulty of computing the optimal policy for reentrant networks is structural, and not a matter of optimization effort. Recall that the greedy content of the Gibbs kernel~\eqref{eq:gibbs-kernel} is the service `advantage' $\mu_k\big(f(x^{(k)})-f(x)\big)$, the class-$k$ term of the controlled generator $\mathcal G^u f$, where the post-service state $x^{(k)}:=x-e_k+e_{s(k)}$ routes the completed job from buffer $k$ to its successor $s(k)$ on the reentrant line; here $e_{s(k)}=0$ at a route's exit. As $\rho\to1$ the dual $f$ grows without bound: it is the relative value function of the ergodic problem, and both the mean cost it centers and this relative value diverge as the network saturates into heavy traffic, whose diffusion limit is a Brownian control problem in the workload~\citep{harrison2000,harrisonvanmieghem1997}. The differences $f(x^{(k)})-f(x)$ that separate admissible actions, by contrast, grow far more slowly. Heuristically, the ratio of the decision-relevant advantage to the magnitude of $f$ from which it must be extracted is of order $(1-\rho)^2$, which is about $1\%$ at $\rho=0.9$. A dual function $f$ parametrized directly in the queue coordinate $x$ must therefore resolve a decision-relevant signal that is a vanishing fraction of the magnitude it is fitting. Network-agnostic parametrizations, such as a multilayer perceptron in $x$ or the permutation-invariant pooling of \Cref{sec:pooling-baseline}, can solve the problem at light load. On the other hand, the induced greedy policy degrades sharply for $\rho\gtrsim0.85$ and typically diverges beyond $\rho=0.9$ in our experiments; see \Cref{fig:cost-vs-rho}. The only mechanism to mitigate this is to introduce the `right' inductive bias into the neural architecture.

\subsubsection{The equivalent-workload architecture}\label{sec:ssc-arch}

The inductive bias comes from the static planning problem (SPP)~\citep{harrison2000} for the network. The SPP identifies, through the dual of its capacity constraints, a \emph{workload matrix} $M=R\,(I-P)^{-1}\in\mathbb R^{2\times6}$, where $P$ is the routing matrix and $R_{sk}=1/\mu_k$ when station $s$ serves class $k$~\citep{harrison2000}. Its rows are the nominal workload each class imposes on each station over its entire remaining route:
\begin{equation}\label{eq:workload-matrix}
    M=\begin{pmatrix} 10 & 2 & 4 & 2 & 0 & 0 \\ 13 & 7 & 1 & 13 & 7 & 1 \end{pmatrix}.
\end{equation}
The two-dimensional \emph{workload} $W=Mx$ is precisely the heavy-traffic `sufficient statistic' needed to be built into the neural architecture. Indeed, under state-space collapse~\citep{harrisonvanmieghem1997}, the diffusion-scale relative value depends on the state only through $W$, and the deviation of the queue from the workload cone is asymptotically negligible.

The equivalent-workload architecture compiles this structure into the dual function approximator. Let $z=(I-M^{+}M)\,x$ be the projection of the queue vector onto $\ker M$, the null space of the workload map, so that $x\mapsto(W,z)$ is a change of coordinates with $Mz=0$; here $M^{+}$ is the Moore--Penrose inverse. We parametrize
\begin{equation}\label{eq:ssc-net}
    f_\theta(t,x)\;=\;V_\theta\big(W_1,W_2,t\big)\;+\;T_\theta(z),
    \qquad W=Mx,\quad z=(I-M^{+}M)\,x,
\end{equation}
with $V_\theta,T_\theta$ multilayer perceptrons that we term the `workload value' and the `transverse correction' respectively. Both are two-hidden-layer perceptrons of width $64$ with sigmoid linear unit (SiLU) activations. $V_\theta$ takes the quadratic workload features $(W_1,W_2,W_1^2,W_2^2,W_1W_2)$ together with $t/T$. $T_\theta$ takes the transverse coordinate $z=(I-M^{+}M)x\in\mathbb{R}^6$; since $\operatorname{rank}M=2$, $z$ is confined to the four-dimensional null space $\ker M$ (equivalently $Mz=0$), and is supplied to $T_\theta$ in its ambient $\mathbb{R}^6$ coordinates rather than in a basis of $\ker M$.

The two components play asymmetric roles. The workload value $V_\theta$ carries the diverging part of $f$ and the gain $g^*$; being a function of only the two-dimensional $W$, it can be resolved sharply even at a high load of $\rho=0.9$. The transverse correction $T_\theta$ is initialized to zero and, as a function of $z\in\ker M$ alone, it supplies only the sub-dominant, finite-$\rho$ correction to the state-space collapse. The advantage $\mu_k\big(f_\theta(x^{(k)})-f_\theta(x)\big)$ is therefore dominated by $V_\theta(Mx^{(k)})-V_\theta(Mx)$, the discrete gradient of a well-conditioned two-dimensional function, and the $(1-\rho)^2$ conditioning obstruction of \Cref{sec:ht-obstruction} is confined to the vanishing transverse term rather than corrupting the entire advantage. In this sense~\eqref{eq:ssc-net} encodes the equivalent workload formulation directly in the hypothesis class of $f$. \Cref{fig:ssc-arch} depicts the construction.

\begin{figure}[htb]
\centering
\begin{tikzpicture}[>=stealth,
  io/.style={draw, thick, rounded corners=1.5pt, fill=gray!8, minimum size=7mm},
  wl/.style={draw, thick, rounded corners=1.5pt, fill=blue!10, minimum height=8mm},
  tv/.style={draw, thick, rounded corners=1.5pt, fill=red!10, minimum height=8mm},
  op/.style={draw, thick, circle, minimum size=6mm},
  fl/.style={->, thick}]
  \node[io] (x) at (0,0.3) {$x\in\mathbb Z^6_+$};
  \node[wl] (W) at (2.9,1.3) {$W=Mx$};
  \node[tv] (z) at (2.9,-0.9) {$z=(I-M^{+}M)x$};
  \node[font=\scriptsize] (t) at (4.7,2.6) {$t$};
  \node[wl] (V) at (6.2,1.5) {$V_\theta(W_1,W_2,t)$};
  \node[tv] (T) at (6.2,-0.9) {$T_\theta(z)$};
  \node[op] (sum) at (8.3,0.3) {$+$};
  \node[io] (f) at (9.9,0.3) {$f_\theta(t,x)$};
  \draw[fl] (x) -- (W) node[pos=0.62, above, font=\scriptsize, fill=white, inner sep=1pt]{$M$};
  \draw[fl] (x) -- (z) node[pos=0.5, below, font=\scriptsize, fill=white, inner sep=1pt]{$I-M^{+}M$};
  \draw[fl] (t) -- (V);
  \draw[fl] (W) -- (V);
  \draw[fl] (z) -- (T);
  \draw[fl] (V) -- (sum);
  \draw[fl] (T) -- (sum);
  \draw[fl] (sum) -- (f);
  \node[font=\scriptsize, blue!55!black] at (6.2,0.62) {dominant: carries $g^*$};
  \node[font=\scriptsize, red!55!black] at (6.2,-1.72) {init $0$: sub-dominant};
\end{tikzpicture}
\caption{The equivalent-workload architecture~\eqref{eq:ssc-net}. The queue vector $x\in\mathbb Z^6_+$ is split by the workload map $M$ of~\eqref{eq:workload-matrix} into the two-dimensional workload $W=Mx$ and the transverse coordinate $z=(I-M^{+}M)x\in\ker M$. The workload value $V_\theta(W_1,W_2,t)$ depends on only two spatial variables and time; it carries the diverging part of the dual and the gain $g^*$ and is resolved sharply through $\rho=0.9$. The transverse correction $T_\theta(z)$ is initialized to zero and, as a function of $z$ alone, is structurally unable to represent the workload, supplying only the sub-dominant finite-$\rho$ correction. Their sum is the dual $f_\theta(t,x)$.}
\label{fig:ssc-arch}
\end{figure}

\subsubsection{An architecture-agnostic baseline}\label{sec:pooling-baseline}

We compare the workload architecture against a permutation-invariant pooling network of the DeepSets type~\citep{zaheer2017deepsets}, which uses the same raw per-class data. Each class $k$ carries a feature vector
\begin{equation}\label{eq:pool-feat}
  \xi_k=\big(x_k,\ 1/\mu_k,\ w^{(1)}_k,\ w^{(2)}_k,\ \lambda_k,\ \chi_k\big),
\end{equation}
recording its queue length, its immediate service requirement $1/\mu_k$, the nominal work $w^{(s)}_k$ it places on each station $s$ (the columns of the workload matrix $M$), its arrival rate $\lambda_k$, and an exit indicator $\chi_k$. A shared encoder $\phi_\theta$ is applied to every class, its outputs are summed within each station, and a decoder $D_\theta$ reads the pooled statistics together with time:
\begin{equation}\label{eq:pool-net}
  P_s=\sum_{k\in\mathcal C_s}\phi_\theta(\xi_k)\ \in\mathbb R^{m},\quad s\in\{1,2\},
  \qquad
  f_\theta(t,x)=D_\theta\big(P_1,P_2,\,n_1,n_2,\,t/T\big),
\end{equation}
where $\mathcal C_s$ is the set of classes served by station $s$, $n_s=\#\{k\in\mathcal C_s:x_k>0\}$ counts its nonempty buffers, and $\phi_\theta,D_\theta$ are multilayer perceptrons. Summing over each station makes $f_\theta$ invariant to the labeling of classes within a station, the symmetry a parametrization with no privileged coordinate should respect; \Cref{fig:pooling-arch} shows the construction.

The single structural knob is the pooling dimension $m$, the width of the per-station statistic the network is permitted to learn, and it is precisely the object the equivalent-workload architecture fixes by hand. Where~\eqref{eq:ssc-net} hard-codes the two-dimensional linear pool $W=Mx$, the pooling network is handed each class's nominal work $w^{(s)}_k$ only as one input feature and must \emph{discover}, through the nonlinear encoder and the sum, which statistic of the queue is sufficient in heavy traffic. This makes it the right control for the inductive-bias question, as it shares the workload architecture's information and its permutation symmetry, and differs only in whether the sufficient statistic is imposed or learned. \Cref{sec:workload-results} scans $m$ and compares the two parametrizations as a function of load.

\begin{figure}[htb]
\centering
\begin{tikzpicture}[>=stealth,
  cb/.style={draw, thick, circle, minimum size=6.5mm, font=\small, fill=blue!10},
  cr/.style={draw, thick, circle, minimum size=6.5mm, font=\small, fill=red!10},
  plb/.style={draw, thick, rounded corners=1.5pt, minimum height=8mm, fill=blue!10},
  plr/.style={draw, thick, rounded corners=1.5pt, minimum height=8mm, fill=red!10},
  io/.style={draw, thick, rounded corners=1.5pt, fill=gray!8, minimum size=7mm},
  fl/.style={->, thick}]
  \node[cb] (x1) at (0,2.3) {$x_1$};
  \node[cb] (x2) at (0,1.7) {$x_2$};
  \node[cb] (x3) at (0,1.1) {$x_3$};
  \node[cr] (x4) at (0,0.1) {$x_4$};
  \node[cr] (x5) at (0,-0.5) {$x_5$};
  \node[cr] (x6) at (0,-1.1) {$x_6$};
  \node[plb] (P1) at (4.2,1.7) {$P_1=\sum_{k\in\mathcal C_1}\phi_\theta(\xi_k)$};
  \node[plr] (P2) at (4.2,-0.5) {$P_2=\sum_{k\in\mathcal C_2}\phi_\theta(\xi_k)$};
  \node[io] (rho) at (7.8,0.6) {$D_\theta$};
  \node[io] (f) at (9.7,0.6) {$f_\theta(t,x)$};
  \node[font=\scriptsize] (t) at (6.6,2.1) {$t$};
  \foreach \x in {x1,x2,x3} \draw[fl, blue!55] (\x) -- (P1);
  \foreach \x in {x4,x5,x6} \draw[fl, red!55] (\x) -- (P2);
  \draw[fl] (P1) -- (rho); \draw[fl] (P2) -- (rho);
  \draw[fl] (t) -- (rho); \draw[fl] (rho) -- (f);
  \node[font=\scriptsize] at (2.05,2.62) {shared encoder $\phi_\theta$};
  \node[font=\scriptsize] at (4.2,-1.55) {station pools, $P_s\in\mathbb R^m$};
\end{tikzpicture}
\caption{The architecture-agnostic pooling baseline~\eqref{eq:pool-net}. A shared encoder $\phi_\theta$ maps each class $k$ to an $m$-dimensional embedding of its features $\xi_k$ (queue length, service rate, nominal work on each station, arrival rate, exit flag), and the embeddings are summed within each station to form the pooled statistics $P_1,P_2\in\mathbb R^m$ (station~$1$ serves classes $\{1,2,3\}$, station~$2$ serves $\{4,5,6\}$). A decoder $D_\theta$ maps the pools, the nonempty-buffer counts $n_1,n_2$, and time to the dual $f_\theta(t,x)$.}
\label{fig:pooling-arch}
\end{figure}

One might ask whether a richer \emph{network-agnostic} aggregator, such as self-attention over the classes in place of the plain sum, which subsumes~\eqref{eq:pool-net} and can weight classes adaptively, would close the gap. Such a network would indeed remain agnostic, but the heavy-traffic obstruction of \Cref{sec:ht-obstruction} is one of conditioning, rather than expressivity. The decision-relevant advantage $\mu_k(f(x^{(k)}) - f(x))$ is a vanishing $\mathcal O\!\big((1-\rho)^2\big)$ fraction of the value regardless of how the sufficient statistic is aggregated. A learned aggregator must still extract that statistic from the same ill-conditioned signal. We believe attention could be a good vehicle for the corrections once the workload coordinate is built into the attention mechanism, but not a network-agnostic substitute. We explore this next in \Cref{sec:switch}.


\subsubsection{Numerical results}\label{sec:workload-results}

We solve the periodic BVP~\eqref{eq:periodic-bvp} for $f_\theta$ by \emph{collocation} of the soft-HJB residual. Specifically, we fit $f_\theta$ by driving the residual to zero in mean square over a sample of states, rather than discretizing the equation on a dense grid over the state space. Collocation is a standard device for boundary-value and optimal-control problems, where it reduces the problem to a finite set of residual conditions~\citep{betts2010practical}; we use its physics-informed neural-network form~\citep{raissi2019pinn}, in which a network is fitted to the residual of the equation at sampled points. The gain is read from the endpoint gap $g^*=(f_T-f_0)/T$ of~\eqref{eq:periodic-dual}. Fitting the dual directly at $\rho=0.9$ is hard: the relative value $h$ it must represent grows like $(1-\rho)^{-2}$, so a network started cold at heavy load must learn a large, stiff target at once. We instead grow the load along a homotopy (\Cref{fig:homotopy}): the dual is fit at light load, where $h$ is small, and warm-started from one load to the next through $\rho=0.5,0.6,\dots,0.9$, annealing $\varepsilon$ at each stage, so the network only ever adds the increment in $h$ between neighboring loads. Policies are scored by long-run average holding cost in a continuous-time simulator of the network; every policy we consider below is evaluated in that same simulator, eliminating any convention mismatch between methods.\footnote{The published QGym numbers are not reproduced by its released simulator in our runs, so we re-run every baseline, including PPO, in a single validated simulator.}

\begin{figure}[htb]
\centering
\begin{tikzpicture}[>=stealth,
  ld/.style={draw, thick, rounded corners=1.5pt, minimum size=8mm, fill=blue!10},
  ldx/.style={draw, thick, rounded corners=1.5pt, minimum size=8mm, fill=red!12},
  fl/.style={->, thick}]
  \node[font=\small] at (-1.15,0) {$\rho:$};
  \node[ld] (a) at (0,0) {$0.5$};
  \node[ld] (b) at (1.6,0) {$0.6$};
  \node[ld] (c) at (3.2,0) {$0.7$};
  \node[ld] (d) at (4.8,0) {$0.8$};
  \node[ld] (e) at (6.4,0) {$0.9$};
  \node[ldx] (f) at (8.2,0) {$0.95$};
  \node[ldx] (g) at (9.9,0) {$0.97$};
  \foreach \x/\y in {a/b,b/c,c/d,d/e,e/f,f/g} \draw[fl] (\x)--(\y);
  \node[font=\scriptsize] at (0,0.82) {cold start};
  \node[font=\scriptsize] at (4.0,0.82) {warm-start $f_\theta$ from the previous load $\longrightarrow$};
  \node[font=\scriptsize, red!60!black] at (9.05,0.82) {beyond benchmark};
  \node[font=\scriptsize, text width=10.4cm, align=center] at (4.4,-0.9)
    {each stage refits the dual at fixed $\rho$ by annealing $\varepsilon$; the heavy-traffic potential $h\sim(1-\rho)^{-2}$ is grown gradually, never fit cold};
\end{tikzpicture}
\caption{The load homotopy used to train the workload dual. The dual is fit cold at light load and warm-started up through increasing $\rho$, annealing $\varepsilon$ at each stage, so the network never learns the large heavy-traffic potential at once; \Cref{sec:deep-ht} continues the homotopy past the benchmark to $\rho=0.95$ and $0.97$.}
\label{fig:homotopy}
\end{figure}

Training uses Adam (learning rate $10^{-3}$, batch $4096$ collocation states), with $\varepsilon$ annealed along the geometric schedule $0.5,0.25,0.1,0.05,0.025$ at each load stage; the residual-conditioning stage of \Cref{sec:deep-ht} runs a further $8\times10^{4}$ steps at fixed $\varepsilon=0.05$. A trained dual is scored as the Gibbs control at the terminal $\varepsilon$, serving each server's class with probability proportional to $\exp\!\big(\mu_k(f_\theta(x^{(k)})-f_\theta(x))/\varepsilon\big)$ over its nonempty buffers (work-conserving, non-idling); each evaluation averages holding cost over a long run after warm-up. Proximal policy optimization (PPO) is a work-conserving actor--critic trained on the uniformized chain and scored in the same simulator. Throughout the numerical sections, a reported value for a learned controller is the mean over independent training runs, each run scored by averaging its evaluation seeds; the quoted $\pm$ is the standard error across training runs, which dominates the within-run evaluation error. Percentage comparisons against a baseline are computed per run under common random numbers and then averaged.

\Cref{fig:cost-vs-rho} traces the induced policy cost against $\rho$ for the architecture-agnostic pooling baseline of \Cref{sec:pooling-baseline} and for~\eqref{eq:ssc-net}. The workload architecture tracks the last-buffer-first-served (LBFS) heuristic through $\rho=0.9$, precisely where the pooling baseline diverges. \Cref{fig:collapse-scan} scans the pooling dimension $m$ of~\eqref{eq:pool-net} and locates the empirical collapse dimension: a scalar per-station statistic ($m=1$) is insufficient, whereas an $m\approx4$--$8$ learned statistic suffices and larger $m$ over-parametrizes---measuring the sufficiency of the workload coordinate directly from data.

At $\rho=0.9$, \Cref{tab:head-to-head} places the workload dual against the classical priority rules and against work-conserving PPO trained in the same simulator. The dual, sharpened by annealing the regularization $\varepsilon$ toward zero, attains an average holding cost of $14.04$, comparable to work-conserving PPO ($14.10$) and well below LBFS ($14.32$); both learned controllers dominate the classical rules ($c\mu$ $17.78$, MaxWeight $17.58$, MaxPressure $23.40$) by a wide margin, with standard errors reported in \Cref{tab:head-to-head}. A principled, low-dimensional value-based dual is thus competitive with state-of-the-art policy-gradient reinforcement learning on the standard heavy-traffic benchmark, while improving on the strong LBFS heuristic. In fact, in \Cref{sec:deep-ht} below we show that this can be further refined.

\begin{table}[htb]
\centering
\caption{Long-run average holding cost on the six-class reentrant network at $\rho=0.9$, all policies scored in one continuous-time simulator. $c\mu$/MaxWeight/MaxPressure are the classical priority rules; LBFS is last-buffer-first-served; PPO is work-conserving proximal policy optimization trained in the same simulator; the BAR-SOT dual uses the workload architecture~\eqref{eq:ssc-net} with $\varepsilon$-annealing. Values are mean $\pm$ standard error over independent seeds: five for the classical rules and LBFS, five independent training runs (three evaluation seeds each) for both PPO and the BAR-SOT dual.}
\label{tab:head-to-head}
\begin{tabular}{lc}
\toprule
Policy & Avg.\ holding cost \\
\midrule
$c\mu$ & $17.78 \pm 0.01$ \\
MaxWeight & $17.58 \pm 0.01$ \\
MaxPressure & $23.40 \pm 0.01$ \\
LBFS & $14.32 \pm 0.01$ \\
PPO (work-conserving) & $14.10 \pm 0.02$ \\
\textbf{BAR-SOT dual (workload architecture)} & $14.04 \pm 0.01$ \\
\bottomrule
\end{tabular}
\end{table}

\begin{figure}[htb]
\centering
\includegraphics[width=0.62\linewidth]{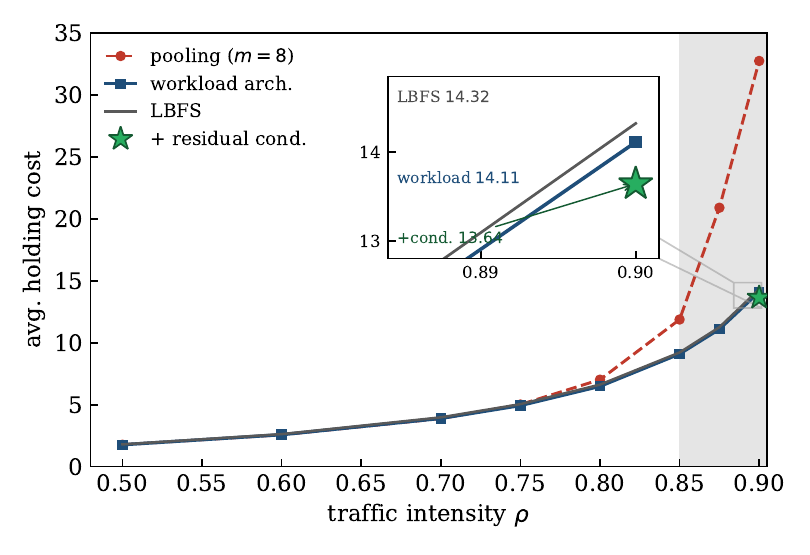}
\caption{Induced policy cost versus traffic intensity $\rho$ for an architecture-agnostic permutation-invariant pooling parametrization of $f$ (dim $m{=}8$) and the equivalent-workload architecture~\eqref{eq:ssc-net}, against the LBFS heuristic; all policies are scored in one continuous-time simulator. The workload architecture tracks LBFS through $\rho=0.9$, whereas the pooling parametrization diverges for $\rho\gtrsim0.85$ (shaded). The star marks the residual-conditioned dual~\eqref{eq:residual-cond} at $\rho=0.9$, which falls to $13.64$, below LBFS; the inset magnifies the $\rho\!\to\!0.9$ cluster.}
\label{fig:cost-vs-rho}
\end{figure}

\begin{figure}[htb]
\centering
\includegraphics[width=0.5\linewidth]{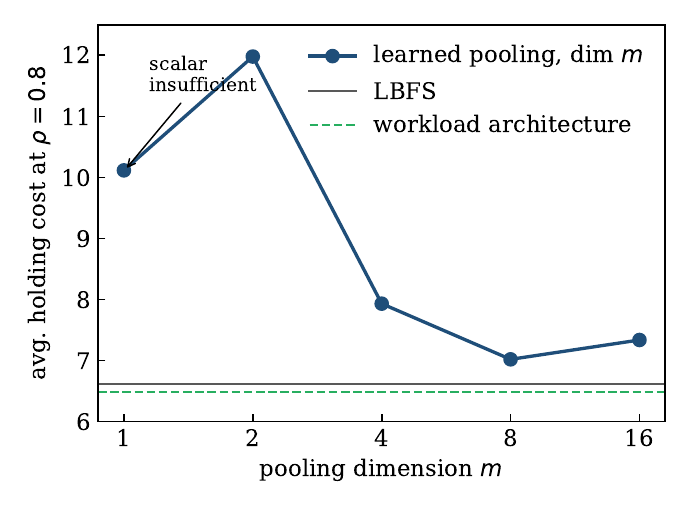}
\caption{Collapse-dimension scan: induced cost at $\rho=0.8$ against the pooling dimension $m$ of a permutation-invariant parametrization of $f$. A scalar statistic ($m=1$) is insufficient; $m\approx4$--$8$ approaches the workload architecture and LBFS, and larger $m$ over-parametrizes---thereby locating the empirical state-space-collapse dimension.}
\label{fig:collapse-scan}
\end{figure}

\subsubsection{Improving the transverse correction}\label{sec:deep-ht}

The heavy-traffic theory of reentrant-line networks identifies where an improvement on LBFS must originate. Under state-space collapse (SSC)~\citep{bramson1998ssc,williams1998diffusion}, the diffusion-scaled queue length is a fixed image of the workload $W=Mx$, so the leading-order holding cost is a function of the workload alone. These results establish SSC for classes of work-conserving head-of-line policies that include the static priority rules LBFS and $c\mu$ and the maximum-pressure rule. The learned controllers are not covered by the theorems. Each of them is nonetheless head-of-line and work-conserving, and we observe empirically that the trained PPO is essentially LBFS, agreeing with LBFS on roughly $92\%$ of station-1 and $98$--$99\%$ of station-2 decisions (\Cref{fig:ppo-dual}); the workload dual carries the workload $W=Mx$ in its architecture by construction. Therefore, the class of head-of-line policies considered agrees at leading order and can differ only through the transverse allocation $z\in\ker M$, at lower order.  Indeed, this is the structure that the decomposition~\eqref{eq:ssc-net} encodes, with $V_\theta(W)$ the leading-order value shared by all of them, and $T_\theta(z)$ the lower-order correction through which a policy may improve upon a static priority such as LBFS. Any margin over LBFS lies entirely in this transverse term.

While the architecture can represent the improved allocation, the least-squares objective provides almost no signal with which to fit it. To see why this is the case, we observe that representing the correction and resolving it from the training objective are distinct issues, and the $(1-\rho)^2$ conditioning obstruction of \Cref{sec:ht-obstruction} obstructs the latter. The soft-HJB residual of~\eqref{eq:periodic-bvp} is dominated by the leading-order workload balance. At the same time, the transverse correction contributes to it at order $(1-\rho)^2$ (the estimate of \Cref{sec:ht-obstruction} applied to the transverse term); since the objective is the squared residual, the portion of it that the correction controls is then of order $(1-\rho)^4$. Therefore, the gradient that trains $T_\theta$ is negligible at heavy load.  Consequently, a dual trained in the standard way attains only parity with PPO at $\rho=0.9$ (see \Cref{tab:head-to-head}).

We therefore improve the conditioning of the fit (in the numerical sense) so that it resolves the transverse correction rather than being dominated by the leading value. Writing the soft-HJB residual $R_\theta(t,x)=\partial_t f_\theta(t,x)+\mathcal H^\varepsilon f_\theta(t,x)$, with $\mathcal H^\varepsilon$ the soft Hamiltonian of \Cref{sec:entropy-reg}, we fix the workload value $V_\theta$ and the gain $g$ at the converged solution of the load homotopy of \Cref{sec:workload-results} (well-conditioned, being two-dimensional) and train only the transverse correction $T_\theta$ against the per-state normalized objective
\begin{equation}\label{eq:residual-cond}
  \mathcal L(\theta)=\mathbb E_{(t,x)\sim\nu}\!\Big[\big(R_\theta(t,x)/s(x)\big)^2\Big],
  \qquad s(x)=c(x)+g+1,
\end{equation}
with $t\sim\mathrm{Unif}[0,T]$ and $x$ drawn from the mixture $\tfrac12\,\omega_\theta+\tfrac12\,q$. Here $\omega_\theta$ is the stationary law of the queue-length chain under the Gibbs control $q^\varepsilon_{f_\theta}$ of \Cref{sec:entropy-reg} induced by the current $f_\theta$; equivalently, it is the measure invariant under the controlled generator $\mathcal G^{q^\varepsilon_{f_\theta}}$, which we sample as the empirical occupation of a finite simulation. The proposal $q=\bigotimes_{k}\mathrm{Geom}(m_k)$ is fixed; its per-class mean $m_k$ is set to about twice the mean length of buffer $k$ under LBFS. Being over-dispersed relative to $\omega_\theta$, it keeps the collocation covering the state space. The normalizer $s(x)$ is chosen so that it rescales each state's residual to order one. Indeed, since the correction term $T_\theta$ enters $R_\theta$ at order $(1-\rho)^2$, the unnormalized loss weights it at order $(1-\rho)^4$, whereas~\eqref{eq:residual-cond} restores an order-one gradient. With $V_\theta$ and $g$ fixed, the $(1-\rho)^2$ obstruction resides entirely in the frozen leading value $V_\theta$, which the two-dimensional workload architecture already resolves accurately. The objective that remains is governed by the correction $T_\theta$, and it is well-conditioned.

At $\rho=0.9$ this refinement of the training reduces the induced cost to $13.64$, an improvement of $4.7\%$ over LBFS and $3.3\%$ over PPO, stable across five independent training runs (\Cref{tab:deep-ht}) and confirmed by a paired common-random-number simulation. The resulting policy has an interesting structure and is a route-aware refinement of LBFS (see \Cref{fig:policy}). Whereas LBFS serves the highest-indexed nonempty buffer at each station, the conditioned dual deprioritizes the short route (class~$3$, $3\!\to\!6\!\to\text{out}$) at station~$1$ in favor of the long reentrant route ($1\!\to\!4\!\to\!2\!\to\!5\!\to\text{out}$) and serves the reentrant class~$4$ at station~$2$. Thus, acting in concert, the two servers drain the buffer that carries the most nominal work, which then holds approximately one fewer job than under LBFS. The two policies nonetheless coincide on about $84\%$ of station-$1$ and $95\%$ of station-$2$ decisions (\Cref{fig:ppo-dual}), so the gain is a targeted reallocation over a small set of contested states. 

\begin{figure}[htb]
\centering
\includegraphics[width=0.55\linewidth]{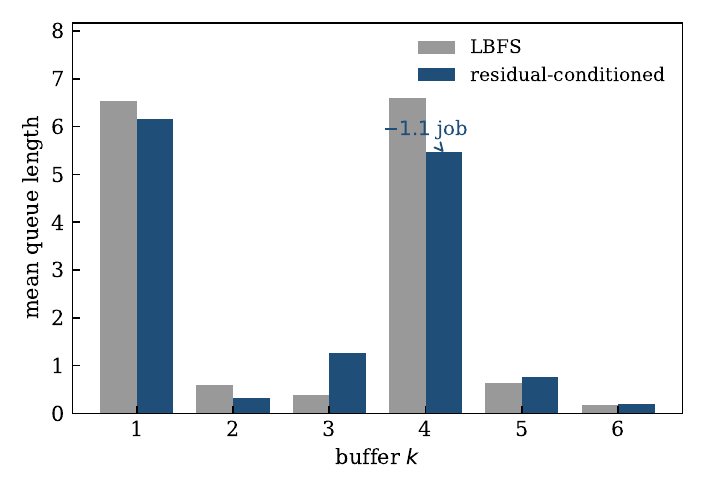}
\caption{Steady-state mean queue length per buffer under the residual-conditioned policy at $\rho=0.9$ against LBFS, both measured in the common simulator. The conditioned dual drains the reentrant bottleneck buffer~$4$ by approximately one job, and buffer~$1$ by less, while holding slightly more at the short-route buffer~$3$. The reallocation is route-aware. Where the two policies differ, the conditioned dual deprioritizes the short-route class~$3$ in favor of the long reentrant route (classes~$1,2$) at station~$1$ and serves the reentrant class~$4$ in place of class~$5$ at station~$2$. Per-station decision agreement across all three controllers is shown in \Cref{fig:ppo-dual}.}
\label{fig:policy}
\end{figure}

\begin{table}[htb]
\centering
\caption{The workload dual at the benchmark load $\rho=0.9$, before and after conditioning the correction residual~\eqref{eq:residual-cond}; long-run average holding cost in the same simulator and against the same LBFS reference as \Cref{tab:head-to-head}. The residual-conditioned row is the mean $\pm$ standard error across five independent training runs, each scored against LBFS by paired common-random-number simulation. Conditioning the correction improves the dual from parity with PPO to a policy that outperforms both PPO and LBFS.}
\label{tab:deep-ht}
\begin{tabular}{lcc}
\toprule
Policy ($\rho=0.9$) & Avg.\ holding cost & vs.\ LBFS \\
\midrule
LBFS & $14.32 \pm 0.01$ & --- \\
PPO (work-conserving) & $14.10 \pm 0.02$ & $-1.5\%$ \\
BAR-SOT dual, standard fit & $14.04 \pm 0.01$ & $-1.9\%$ \\
\textbf{BAR-SOT dual, residual-conditioned} & $\mathbf{13.64 \pm 0.03}$ & $\mathbf{-4.7\%}$ \\
\bottomrule
\end{tabular}
\end{table}

\begin{figure}[t]
\centering
\includegraphics[width=0.92\linewidth]{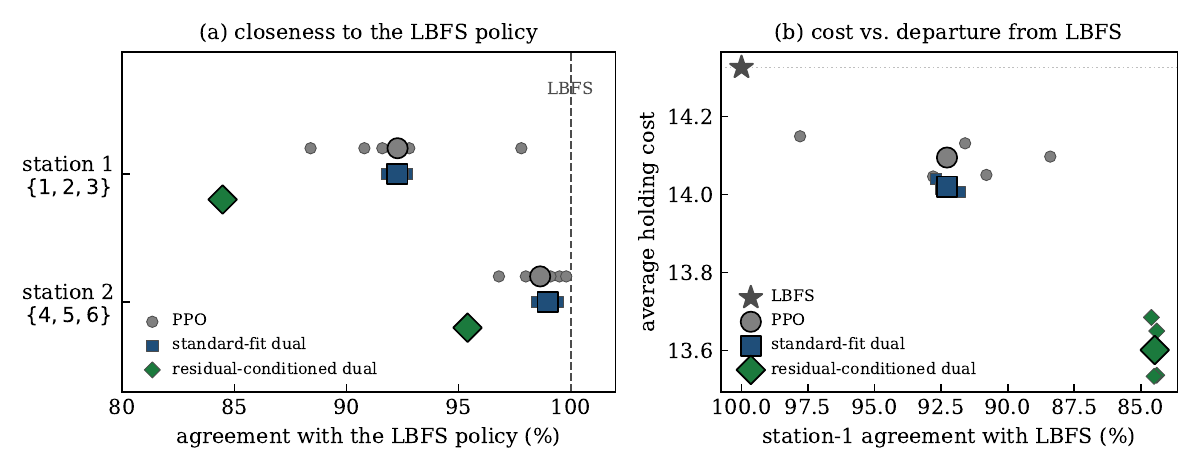}
\caption{Three learned controllers versus LBFS at $\rho=0.9$, PPO shown as five and each dual as four independent training runs (large markers: run means), evaluated on a common occupation. (a) Agreement with the LBFS policy by station. PPO and the standard-fit dual stay close to LBFS at both stations; only the residual-conditioned dual departs materially, and chiefly at station~1. (b) Average holding cost against station-1 agreement, with departure from LBFS increasing to the right. The conditioned-dual cluster is the only one that both leaves the LBFS policy and falls below its cost.}
\label{fig:ppo-dual}
\end{figure}

Both ingredients of~\eqref{eq:residual-cond} are necessary, and neither suffices in isolation (\Cref{tab:ablation}). Normalizing the residual while leaving $V_\theta$ free to train recovers only the standard fit ($-1.9\%$): the leading value absorbs the correction, so the normalization has nothing sub-dominant to act on. Freezing $V_\theta$ without normalizing instead starves the correction of gradient, as the scaling in~\eqref{eq:residual-cond} anticipates, and the fit is then both worse than the standard dual and unstable across runs ($+1.4\%$ on average, ranging from $-1.3\%$ to $+4.7\%$). The improvement appears only when the leading value is frozen and the residual normalized together, so that the correction has both a sub-dominant residual to fit and an order-one gradient with which to fit it.

\begin{table}[htb]
\centering
\caption{Contribution of the two ingredients of the conditioned objective~\eqref{eq:residual-cond} at $\rho=0.9$, reported as improvement over LBFS (mean over three independent training runs with paired common-random-number simulation; five runs for the full method). Neither freezing the leading value nor normalizing the residual improves on the standard fit on its own, and freezing alone is unstable; the two are effective only in combination.}
\label{tab:ablation}
\begin{tabular}{lc}
\toprule
Conditioning of the correction fit & vs.\ LBFS \\
\midrule
None (standard fit) & $-1.9\%$ \\
Normalize residual only & $-1.9\%$ \\
Freeze $V_\theta,g$ only & $+1.4\%$ (unstable) \\
\textbf{Freeze and normalize (full)} & $\mathbf{-4.7\%}$ \\
\bottomrule
\end{tabular}
\end{table}

\Cref{fig:ppo-dual} shows all three controllers against LBFS. PPO and the standard-fit dual reproduce the LBFS policy closely and their costs cluster just below the latter (\Cref{tab:deep-ht}). The residual-conditioned dual is the only controller that departs from the heuristic, primarily at station~1, where agreement falls to about $84\%$. The gain is thus a change in the station-1 scheduling decisions rather than merely tie-breaking.

Beyond the benchmark load of $\rho = 0.9$ the dual correction persists, but its resolution becomes variance-limited. Under the standard fit the greedy cost rises to $31.2$ at $\rho=0.95$ and $54.5$ at $\rho=0.97$, against LBFS values of $27.97$ and $39.93$ (excesses of $11\%$ and $37\%$). The degradation comes from the training objective. As shown above, the squared residual weights the transverse correction at order $(1-\rho)^4$, so the standard fit leaves the correction unresolved, and the cost of scheduling without it grows with the load. Residual conditioning holds the policy near LBFS, and below it on some training runs (by as much as $5\%$ at $\rho=0.95$), but the $(1-\rho)^2$ signal-to-noise ratio leaves the correction ill-conditioned to fit, and the training converges to a favorable or an unfavorable solution according to its initialization. We observed that larger batches, weight averaging, and $\varepsilon$-annealing reduce this spread without removing it. Exploring additional variance reduction is a significant exercise and worthy of its own investigation in this setting. 

Finally, we observe that this result is consistent with the available optimality theory. The sharp asymptotic-optimality results concern maximum pressure rather than LBFS and assume a single effective bottleneck, that is, complete resource pooling~\citep{dailin2008maxpressure}; our network loads both stations critically and therefore has a two-dimensional workload~\citep{atalin2008multiple}, for which no policy has been proven asymptotically optimal. 

\subsection{Scheduling the input-queued switch}
\label{sec:switch}
The input-queued switch is a scheduling problem of independent interest that comes equipped with MaxWeight matching and its refinements. MaxWeight matching is a strong, widely used heuristic against which we measure the learned dual is of this paper. The switch also exercises the scaling of any method, since the state is the full matrix of queue lengths, and the action is a combinatorial matching whose number of pure alternatives grows super-exponentially in the switch size. We now implement the periodic dual of \Cref{sec:periodic-bvp} and its entropy-regularized Gibbs control on this problem. 

\subsubsection{The switch and the comparison policies}
\label{sec:switch-model}

An $N\times N$ input-queued switch moves traffic in fixed-length \emph{cells}, the minimal unit of work, and keeps a separate queue $Q_{ij}$ for the cells waiting at input $i$ for output $j$; there are $N^2$ such virtual output queues, so the state is the matrix $Q=(Q_{ij})$. Time is slotted.\footnote{Note that the slotted switch is a discrete-time self-transport problem in the sense of \Cref{sec:finite-mdp}, with $P^u-I$ in the role of the generator $\mathcal G^u$. The results of \Cref{sec:finite-mdp} are proved for a finite state space, while the switch backlog is unbounded. We therefore do not rely on the results of \Cref{sec:finite-mdp} here. The working equation~\eqref{eq:switch-acoe} is instead derived below from the slotted form of the periodic dual of \Cref{sec:periodic-bvp}.} A \emph{matching} is defined as a permutation $\sigma$ that clears one cell from each nonempty queue $(i,\sigma(i))$, since the switch serves at most one cell per input and per output. In each slot, cells arrive, the switch serves a \emph{matching}, and a holding cost $c(Q)=\sum_{ij}c_{ij}Q_{ij}$ is incurred on the backlog that remains, with $c_{ij}>0$ the per-cell cost on edge $(i,j)$. The scheduler's only decision is which matching to serve each time slot, as a function of the current backlog, and the objective is the long-run average holding cost. 

A randomized (relaxed) matching is a doubly stochastic serve-rate matrix $P$, with $P_{ij}$ the probability of serving edge $(i,j)$; these form the Birkhoff polytope $\mathcal B_N$, whose vertices are precisely the permutation matrices. Writing $\widetilde W_{ij}=c_{ij}Q_{ij}$ for the cost-weighted backlog, the classical scheduler serves the maximum-weight matching $\sigma^\star=\arg\max_\sigma\langle\widetilde W,\sigma\rangle$, the vertex optimum of the linear program $\max_{P\in\mathcal B_N}\langle P,\widetilde W\rangle$~\citep{tassiulas1992,mckeown1999}.

We consider four comparison policies. MaxWeight serves the matching of largest total backlog, $\arg\max_\sigma\sum_i Q_{i\sigma(i)}$; it is throughput-optimal, keeping the switch stable whenever any policy can. However, it is myopic in two respects, ignoring the costs $c_{ij}$ and optimizing only the backlog cleared in the current slot with no account of where the state is driven next. Weighted MaxWeight (C-MWS) ranks matchings by cost-weighted backlog $\sum_i c_{i\sigma(i)}Q_{i\sigma(i)}$, and is cost-aware and stable but still a one-slot rule. Algorithm~2 is the $N\times N$ heuristic of \citet{lu2025weighteddelay}, which serves a matching of maximum \emph{cardinality}, clearing as many nonempty queues as possible, ensuring a backlog does not build up, and breaking ties among such matchings by cost-weighted backlog. This policy is shown to improve on MaxWeight in their steady-state experiments and is the strongest general-$N$ competitor of which we are aware; therefore, it is the policy the learned dual is measured against. However, the exact optimum is available only at $N=2$, where \citet{lu2025weighteddelay} formulate the $2\times2$ switch as a Markov decision process and solve it. 

\subsubsection{The dual and the induced schedule}
\label{sec:switch-control}

The dual for the switch is the periodic boundary-value problem~\eqref{eq:periodic-bvp} of \Cref{sec:periodic-bvp}, in the state $Q$ with the matching $\sigma$ as the control. Because the dynamics are time-homogeneous, its solution takes the lift ansatz~\eqref{eq:ansatz}.
\begin{equation}\label{eq:switch-ansatz}
  f^{*}_t(Q)=h(Q)+(t-T)\,g,
\end{equation}
 Since the switch runs in slots, the boundary-value problem~\eqref{eq:periodic-bvp} takes its slotted form: over one slot the entropy-regularized dual advances by the soft dynamic-programming map
\begin{equation}\label{eq:switch-softdp}
  f_{t+1}(Q)=c(Q)+\operatorname*{softmin}^{\varepsilon,r}_{\sigma}\,\mathbb E_A\big[f_t(\mathrm{serve}_\sigma(Q)+A)\big],
  \qquad
  \operatorname*{softmin}^{\varepsilon,r}_{\sigma}V_\sigma:=-\varepsilon\log\sum_\sigma r(\sigma\mid Q)\,e^{-V_\sigma/\varepsilon},
\end{equation}
the reference-weighted soft-minimum over the $N!$ matchings that specializes the soft Hamiltonian of \Cref{sec:entropy-reg}. Here, $\mathrm{serve}_\sigma(Q)$ is the backlog after serving the matching $\sigma$ and $A$ the arrivals in the slot, and the periodicity $f_T-f_0=\mathrm{const}$ of~\eqref{eq:periodic-dual} stands in for a terminal condition. The ansatz~\eqref{eq:switch-ansatz} makes the dual advance by exactly the gain $g$ in each slot, i.e., $f_{t+1}(Q)-f_t(Q)=g$, and its linear-in-time term is common to every matching and every arrival, so it factors through the soft-minimum,
\begin{equation*}
  \operatorname*{softmin}^{\varepsilon,r}_{\sigma}\mathbb E_A\big[h(\mathrm{serve}_\sigma(Q)+A)+(t-T)g\big]
  =(t-T)g+\operatorname*{softmin}^{\varepsilon,r}_{\sigma}\mathbb E_A\big[h(\mathrm{serve}_\sigma(Q)+A)\big].
\end{equation*}
Substituting $f_t=h+(t-T)g$ into~\eqref{eq:switch-softdp} and canceling the shared time index leaves a stationary equation for $(h,g)$, the entropy-regularized average-cost optimality equation
\begin{equation}\label{eq:switch-acoe}
  g + h(Q) \;=\; c(Q)\;-\;\varepsilon\log\sum_{\sigma} r(\sigma\mid Q)\,
  \exp\!\Big(-\tfrac1\varepsilon\,\mathbb{E}_{A}\big[h\big(\mathrm{serve}_\sigma(Q)+A\big)\big]\Big),
\end{equation}
with the reference control $r(\sigma\mid Q)$ tilting the soft-minimum toward the low-cost matchings. As $\varepsilon\to0$ the log-sum sharpens to the hard minimum and \eqref{eq:switch-acoe} becomes the classical average-cost optimality equation $g+h(Q)=c(Q)+\min_\sigma \mathbb{E}_A[h(\mathrm{serve}_\sigma(Q)+A)]$, whose minimizer is the schedule that serves the matching of least expected continuation,
\begin{equation}\label{eq:switch-greedy}
  \sigma^\star(Q)\;=\;\arg\min_\sigma\ \mathbb{E}_A\big[\,h\big(\mathrm{serve}_\sigma(Q)+A\big)\,\big].
\end{equation}
This is the value-function counterpart of MaxWeight: where MaxWeight ranks matchings by the backlog they clear in the current slot, \eqref{eq:switch-greedy} ranks them by the future cost of the resulting state. The difference between the two rankings is precisely the lookahead that a myopic rule discards.

The control paired with the soft equation~\eqref{eq:switch-acoe} is the Gibbs kernel of \Cref{sec:entropy-reg},
\begin{equation}\label{eq:switch-gibbs}
  q^\varepsilon(\sigma\mid Q)\;\propto\;r(\sigma\mid Q)\,\exp\!\Big(-\tfrac{1}{\varepsilon}\,\mathbb{E}_A\big[h(\mathrm{serve}_\sigma(Q)+A)\big]\Big),
\end{equation}
with the same reference $r$ and temperature $\varepsilon$ as in~\eqref{eq:switch-acoe}, concentrating on~\eqref{eq:switch-greedy} as $\varepsilon\to0$. When $N$ is small ($N=2,3$), the $N!$ matchings are few enough to enumerate, both the soft Hamiltonian and \eqref{eq:switch-gibbs} can be formed outright. In \Cref{sec:switch-scaling-dual}, we treat the enumeration-free form for large $N$.

\subsubsection{The construction of the learned dual}
\label{sec:switch-choices}
The reference control $r$ in \eqref{eq:switch-gibbs} anchors the stability of the entropic control. The Gibbs control inherits the \emph{support} of $r$, so every $q^\varepsilon$, at every $\varepsilon>0$, keeps positive probability on the work-conserving matchings of the reference. Support inheritance alone is not a stability proof of course. While we do not verify the drift condition of \Cref{assump:stability} for the softened reference. We make the emperical observation that the reference $r\propto\exp(\alpha\,\mathrm{card}_\sigma(Q)+\beta\,\mathrm{weight}_\sigma(Q))$ is stable at $\varepsilon>0$, consistently across loads. This is a softened Algorithm~2 that concentrates on the high-cardinality, high-weight matchings while retaining full support, with $\alpha$ large so that cardinality dominates. This reference also initializes the iteration at a known-good policy: at large $\varepsilon$, the control is essentially Algorithm~2, and as $\varepsilon$ falls, it departs from it only as far as the dual allows. The dual itself is represented by a neural network. The state has $N^2$ unbounded coordinates, so a lookup table for $h(Q)$ is feasible only at $N=2$; for $N\ge3$ we represent the dual by a network $h_\theta(Q)$.

The dual is fit on the states the control visits, since a value function need only be accurate where the policy operates. We alternate a \emph{forward} step---simulating the current Gibbs control and collecting the backlogs it visits in steady state---with a \emph{backward} step---updating $h_\theta$ to satisfy \eqref{eq:switch-acoe} on exactly those backlogs---refreshing the states as $h_\theta$ changes. The forward simulation is run long enough to reach steady state before collection of the backlog, so the states fed to the backward step are those the policy occupies rather than transient states. Fitting the dual accurately on a transient set places it in the wrong place and can produce an unstable schedule.

The backward step is a fitted-value update rather than a residual minimization. The natural way to enforce \eqref{eq:switch-acoe} is to minimize the squared difference between its two sides in expectation over sampled arrivals. However, this is biased, since the right-hand side holds an expectation over arrivals inside a nonlinear function of the next state, and squaring a single sampled evaluation estimates the square plus the arrival variance. This drives $h_\theta$ toward a function shaped by noise rather than by the dynamics. In order to avoid this, we form the target with the exact arrival distribution (which is known), and treat it as fixed when we regress $h_\theta$ onto the target. The gain $g$ is then the average of the target minus the value over the fitted states.

\subsubsection{Performance at moderate load}
\label{sec:switch-moderate}

We report three checks of increasing difficulty, all on average holding cost in steady state. On the $2\times2$ switch, solving \eqref{eq:switch-acoe} in tabular form by dynamic programming on the truncated chain gives the true optimal average cost, and the BAR-SOT dual reproduces it to within simulation error on a symmetric unit-cost instance and on a weighted one. On the weighted instance, it matches the optimum at $+0.0\%$, while weighted MaxWeight sits about $20\%$ above it and MaxWeight is higher still.\footnote{The comparison is average-cost throughout, against our own exact solution; \citet{lu2025weighteddelay} study this switch in a partly discounted formulation, so we compare against the exact average-cost optimum directly.} Recovering the known optimum confirms that the average-cost dual, solved without approximation, is the correct object.

On the symmetric $3\times3$ switch, with the network in place of the table and the matchings still enumerated, the learned dual matches Algorithm~2 on a symmetric unit-cost instance; both are essentially optimal there, so agreement is the correct outcome and confirms that the network solve reproduces the tabular one. On a weighted $3\times3$ instance, where Algorithm~2 and the optimum need not agree, the learned dual improves on Algorithm~2: averaged over five training seeds, its schedule costs $5.3\%$ less, with every seed beating it, and it improves on the MaxWeight family by wider margins (\Cref{tab:switch-moderate}). The improvement is the value of the lookahead in \eqref{eq:switch-greedy}: the dual serves matchings that are not best in the current slot but leave the switch in a cheaper state to schedule next.

\begin{table}[htb]
\centering
\caption{Average weighted backlog on the weighted $3\times3$ switch at $\rho=0.85$; lower is better.
Mean $\pm$ standard error over five training seeds for the dual; the heuristics are deterministic. The
$5.3\%$ improvement is the per-seed paired mean (each seed's dual against Algorithm~2 under common
random numbers) and need not equal the ratio of the two aggregate means shown. The heuristic entries are themselves simulation estimates; across the independent evaluation streams their standard errors are at most $0.1\%$ (Algorithm~2 $\pm0.05\%$), negligible relative to the reported margins.}
\label{tab:switch-moderate}
\begin{tabular}{lc}
\toprule
Policy & Avg.\ weighted backlog \\
\midrule
MaxWeight            & $44.27$ \\
Weighted MaxWeight   & $31.28$ \\
Algorithm~2          & $29.30$ \\
BAR-SOT dual         & $\mathbf{27.79\pm0.04}$\ \ ($-5.3\%$) \\
\bottomrule
\end{tabular}
\end{table}

\subsubsection{Deep heavy traffic}
\label{sec:switch-dual-deepht}

Heavy traffic sharpens the evaluation in the same way as in \Cref{sec:heavy-traffic}. Observe, as $\rho\to1$ the queues and the value $h(Q)$ grow, but the schedule reads only the difference in value between one matching and another. This is a small fraction of $h(Q)$ that shrinks further as the load rises. The dual must therefore resolve an ever-finer difference in value against an ever-larger background, and a network fit to the value resolves the fine-differences poorly.

This is a symptom of a loss of stability, and it appears precisely when the entropy is removed. Taking $\varepsilon\to0$ turns the control into the hard minimizer over an imperfectly resolved value, and at $\rho=0.9$ and $0.95$ the resulting schedule is not merely suboptimal but unstable, its backlog running to hundreds of times Algorithm~2's while MaxWeight, weighted MaxWeight, and Algorithm~2 remain bounded. {The observed stability of the regularized control tracks the reference measure. The reference measure keeps positive probability on the stabilizing matchings at every $\varepsilon>0$, and the positive probability vanishes in the limit $\varepsilon\to0$}, consequently sharpening an exact dual to a deterministic rule is safe while sharpening an approximate one is apparently not.

The remedy is to stop, keep $\varepsilon$ bounded away from zero, so that the reference measure holds the control stable while the dual's tilt still improves on Algorithm~2. At $\varepsilon=0.5$ the control beats Algorithm~2 by $5.0\%$ at $\rho=0.9$ and $3.3\%$ at $\rho=0.95$, averaged over five seeds, with every seed besting Algorithm~2 (\Cref{tab:switch-deepht}). 


\begin{table}[htb]
\centering
\caption{Average weighted backlog on the weighted $3\times3$ switch as load increases, learned dual
against Algorithm~2. The middle column is the fully sharpened control ($\varepsilon\to0$); the right
keeps $\varepsilon=0.5$. Beats are mean $\pm$ standard error over five seeds. {The heuristic entries are themselves simulation estimates. Across the evaluation streams of the five runs, the standard errors of the heuristic entries are $\pm0.2\%$ (Algorithm~2, $\rho=0.9$) and $\pm0.4\%$ ($\rho=0.95$), negligible relative to the reported margins.}}
\label{tab:switch-deepht}
\begin{tabular}{lccc}
\toprule
Load & Algorithm~2 & BAR-SOT, $\varepsilon\to0$ & BAR-SOT, $\varepsilon=0.5$ \\
\midrule
$\rho=0.85$ & $29.30$ & $27.79$ \ (stable, $-5.3\%$) & --- \\
$\rho=0.90$ & $41.5$  & ${\sim}2.9\times10^{4}$ (unstable) & $\mathbf{-5.0\%\pm0.17\%}$ \\
$\rho=0.95$ & $76.5$  & ${\sim}1.3\times10^{5}$ (unstable) & $\mathbf{-3.3\%\pm0.29\%}$ \\
\bottomrule
\end{tabular}
\end{table}

\subsubsection{Scaling to large switches}
\label{sec:switch-scaling-dual}

The results so far enumerate the matchings, which is impossible beyond a handful of ports. To handle large $N$, the value is represented by a graph-attention network on the bipartite structure of the switch. The design of this network is what makes the dual resolvable in heavy traffic and applicable at every switch size with the same parameter count. Input ports and output ports are the nodes, and the queues live on the edges: each input port $i$ carries an embedding $u_i\in\mathbb R^{d}$, each output port $j$ an embedding $v_j\in\mathbb R^{d}$, and each edge $(i,j)$ an embedding $e_{ij}\in\mathbb R^{d}$. The node embeddings are seeded with the port workloads, and the edges with the local backlog, holding cost, and per-cell arrival rate $\lambda_{ij}$,
\begin{equation}\label{eq:gnn-init}
  u_i^{0}=\phi_{\mathrm{in}}(R_i),\quad v_j^{0}=\phi_{\mathrm{out}}(C_j),\quad e_{ij}^{0}=\phi_{\mathrm e}(Q_{ij},c_{ij},\lambda_{ij}),
  \qquad R_i=\textstyle\sum_j Q_{ij},\quad C_j=\sum_i Q_{ij},
\end{equation}
with $\phi_{\mathrm{in}},\phi_{\mathrm{out}},\phi_{\mathrm e}$ small multilayer perceptrons. Over a few layers each port refreshes its embedding by an attention-weighted aggregation over its own edges, and each edge from its two endpoints,
\begin{equation}\label{eq:gnn-round}
  u_i \;\leftarrow\; u_i+\psi_{\mathrm{in}}\!\Big(\sum_j \alpha_{ij}\,e_{ij}\Big),
  \quad v_j \;\leftarrow\; v_j+\psi_{\mathrm{out}}\!\Big(\sum_i \beta_{ij}\,e_{ij}\Big),
  \quad e_{ij}\;\leftarrow\; e_{ij}+\psi_{\mathrm e}(u_i,v_j,e_{ij}),
\end{equation}
where $\alpha_{i\cdot}=\operatorname*{softmax}_{j} s(u_i,v_j,e_{ij})$ and $\beta_{\cdot j}=\operatorname*{softmax}_{i} s(u_i,v_j,e_{ij})$ are the row- and column-wise attention weights of a shared score $s$, and $\psi_{\mathrm{in}},\psi_{\mathrm{out}},\psi_{\mathrm e}$ are further multilayer perceptrons. The value is a permutation-invariant readout of the port embeddings,
\begin{equation}\label{eq:gnn-readout}
  h_\theta(Q)\;=\;\psi_{\mathrm r}\Big(\tfrac1N\textstyle\sum_i u_i,\ \tfrac1N\sum_j v_j\Big).
\end{equation}
\Cref{fig:switch-gnn} depicts the construction.

\begin{figure}[htb]
\centering
\begin{tikzpicture}[>=stealth,
  inp/.style={draw, thick, circle, minimum size=8mm, fill=blue!10},
  outp/.style={draw, thick, circle, minimum size=8mm, fill=red!10},
  ed/.style={gray!55}, att/.style={blue!70, very thick}]
  \node[inp] (i1) at (0,2.4) {$u_1$}; \node[font=\scriptsize, fill=white, inner sep=1pt] at (0,1.78) {$R_1$};
  \node[inp] (i2) at (0,1.2) {$u_2$}; \node[font=\scriptsize, fill=white, inner sep=1pt] at (0,0.58) {$R_2$};
  \node[inp] (i3) at (0,0)   {$u_3$}; \node[font=\scriptsize, fill=white, inner sep=1pt] at (0,-0.62) {$R_3$};
  \node[outp] (o1) at (3.6,2.4) {$v_1$}; \node[font=\scriptsize, fill=white, inner sep=1pt] at (3.6,1.78) {$C_1$};
  \node[outp] (o2) at (3.6,1.2) {$v_2$}; \node[font=\scriptsize, fill=white, inner sep=1pt] at (3.6,0.58) {$C_2$};
  \node[outp] (o3) at (3.6,0)   {$v_3$}; \node[font=\scriptsize, fill=white, inner sep=1pt] at (3.6,-0.62) {$C_3$};
  \foreach \i in {i1,i2,i3} \foreach \o in {o1,o2,o3} \draw[ed] (\i)--(\o);
  \draw[att] (i2)--(o1); \draw[att] (i2)--(o2); \draw[att] (i2)--(o3);
  \node[font=\scriptsize, fill=white, inner sep=1pt] at (1.8,2.02) {$e_{ij}=\phi_{\mathrm e}(Q_{ij},c_{ij},\lambda_{ij})$};
  \node[att, font=\scriptsize, fill=white, inner sep=1pt] at (1.9,0.72) {$\alpha_{2j}$};
  \node[draw, thick, rounded corners=1.5pt, minimum height=9mm, fill=gray!8] (h) at (6.5,1.2) {$h_\theta(Q)$};
  \draw[->, thick] (o1) to[out=0,in=150] (h.north west);
  \draw[->, thick] (o2) -- (h.west);
  \draw[->, thick] (o3) to[out=0,in=210] (h.south west);
  \node[font=\scriptsize] at (5.2,1.68) {pool};
\end{tikzpicture}
\caption{The bipartite graph-attention dual $h_\theta(Q)$ of \eqref{eq:gnn-init}--\eqref{eq:gnn-readout}. The $N$ input ports (blue) and $N$ output ports (red) are the nodes and the $N^2$ queues live on the edges. Each node embedding is seeded with its port workload ($R_i$ for inputs, $C_j$ for outputs) and each edge with its backlog, cost, and arrival rate; over a few rounds each port refreshes its embedding by an attention-weighted aggregation $\alpha_{ij}$ over its incident edges (shown at input port~$2$), and the value is a permutation-invariant pooling of the input and output embeddings. Every map is shared across ports, so the network is equivariant under relabeling the ports and its parameter count is independent of~$N$.}
\label{fig:switch-gnn}
\end{figure}

The inductive bias follows from two properties of \eqref{eq:gnn-init}--\eqref{eq:gnn-readout}. First, every map---the initializers $\phi$, the updates $\psi$, and the attention score $s$---is shared across all ports and edges, with no parameter tied to a particular port. The network is therefore equivariant under jointly relabeling the input ports (rows) and output ports (columns) of $(Q,c,\lambda)$. In part, observe that such a relabeling permutes the embeddings and, through the symmetric pooling in \eqref{eq:gnn-readout}, leaves $h_\theta$ unchanged. This is precisely the symmetry of the switch, whose scheduling problem is invariant under the same relabeling, so the hypothesis class is confined to functions with the correct invariance; and because no weight is indexed by a port, the parameter count is $\mathcal O(1)$ in $N$, and one architecture serves every size; the dual reported at each size below is trained at that size. Second, the seeding in \eqref{eq:gnn-init} makes the state-space-collapse coordinate of \Cref{sec:heavy-traffic} the base representation: in heavy traffic the value depends on the state chiefly through the port workloads $(R,C)$, which the node initialization surfaces directly, while the message-passing rounds supply only the sub-dominant per-cell corrections that the workloads alone cannot resolve.

This is the switch counterpart of the equivalent-workload architecture of \Cref{sec:ssc-arch}: there the inductive bias was the linear workload map $M$ compiled into the additive form $f_\theta=V_\theta(W)+T_\theta(z)$ of \eqref{eq:ssc-net}, and here it is bipartite equivariance together with workload seeding compiled into a graph-attention network. In both cases, it is the design of the inductive bias, not the capacity of the network, that carries the dual through heavy traffic and across problem sizes.

The action is handled differently as well, since the soft Hamiltonian is a sum over the $N!$ matchings and cannot be formed. We use instead the entropic matching over the Birkhoff polytope: the entropy-regularized control is the Gibbs law $P^\varepsilon=\arg\max_{P\in\mathcal B_N}\{\langle P,\widetilde W\rangle-\varepsilon\sum_{ij}P_{ij}\log P_{ij}\}$, the Sinkhorn scaling of $e^{\widetilde W/\varepsilon}$, computed in $\mathcal O(N^2)$ per iteration without ever touching the $N!$ vertices~\citep{cuturi2013sinkhorn}. \Cref{tab:switch-scaling} presents a timely study of a single log-domain implementation against the maximum-weight matching; both are polynomial and fast, and the entropic control is at most a constant factor costlier than MaxWeight. In the runs below a matching is drawn from the entropy-regularized law by Gumbel perturb-and-MAP~\citep{papandreouyuille2011}. Independent Gumbel noise is added to each edge score, and the maximum-weight matching of the perturbed scores is served. Exact sampling from the Gibbs law over matchings would require one Gumbel variable per matching, $N!$ in all; perturbing the $N^2$ edge scores instead is tractable but reproduces that law only approximately. The edge scores combine the softened Algorithm~2 reference and the derivative $\partial h_\theta/\partial Q_{ij}$ of the dual in the cell backlog. The derivative is the first-order expansion of the one-cell difference $h_\theta(Q)-h_\theta(Q-e_{ij})$. Under this expansion the expected continuation in~\eqref{eq:switch-greedy} becomes, up to a term that does not depend on the matching, additive over the edges of the matching, so the score is a single $N\times N$ matrix rather than a quantity attached to each of the $N!$ matchings. The entropy level $\varepsilon>0$ is kept bounded away from zero, as read in \Cref{sec:switch-dual-deepht}.

\begin{table}[htb]
\centering
\caption{Per-slot cost of the entropic matching (one log-domain Sinkhorn, fixed $200$ iterations)
against the maximum-weight matching (Hungarian algorithm), versus switch size $N$. The pure action set
has $N!$ matchings; both computed controls are polynomial, and the entropic relaxation never
enumerates them. Single-threaded CPU, mean $\pm$ standard error over $12$ instances; the Sinkhorn
iteration is a dense matrix operation and parallelizes on a GPU.}
\label{tab:switch-scaling}
\begin{tabular}{rrrr}
\toprule
$N$ & $\log_{10} N!$ & Sinkhorn (ms) & Hungarian (ms) \\
\midrule
$5$    & $2.1$    & $1.61\pm0.05$    & $0.003\pm0.001$ \\
$10$   & $6.6$    & $1.74\pm0.04$    & $0.004\pm0.001$ \\
$20$   & $18.4$   & $2.33\pm0.05$    & $0.009\pm0.001$ \\
$50$   & $64.5$   & $5.55\pm0.08$    & $0.046\pm0.002$ \\
$100$  & $158.0$  & $16.8\pm0.3$     & $0.185\pm0.007$ \\
$200$  & $374.9$  & $61.0\pm2.2$     & $0.80\pm0.03$   \\
$500$  & $1134.1$ & $471\pm8$        & $5.93\pm0.18$   \\
$1000$ & $2567.6$ & $2057\pm28$      & $28.8\pm0.8$    \\
\bottomrule
\end{tabular}
\end{table}

Next, we report a weighted switch at $\rho=0.85$ for $N=10$, $20$, and $50$, against Algorithm~2, using paired common random numbers, so that every policy is driven by the same arrival stream on the same replicas and the reported gap is a per-replica difference with small standard error ($\pm0.04\%$ at $N=10$ with $64$ replicas).

\begin{table}[htb]
\centering
\caption{Average weighted backlog relative to Algorithm~2 at $\rho=0.85$, versus switch size; negative values
indicate the dual is better. {The dual's advantage shrinks monotonically with the switch size. The dual beats
Algorithm~2 through moderate sizes and reaches parity by $N=50$.} The last column, cost-weighted
MaxWeight, closes almost all of its own gap to Algorithm~2 over the same range---the sign that the
scheduling problem itself is becoming easy, not that the dual is failing. The $N=10$ figures are paired
common-random-number estimates with standard error $\pm0.04\%$; the $N=20$ and $N=50$ entries are single paired runs, so the $+0.3\%$ at $N=50$ is parity within the simulation resolution. Dashes mark configurations not run.}
\label{tab:switch-scaling-dual}
\begin{tabular}{rcc}
\toprule
$N$ & BAR-SOT dual vs.\ Alg.~2 & C-MWS vs.\ Alg.~2 \\
\midrule
$3$   & $-5.3\%$ & $+25\%$ \\
$10$  & $-1.0\%$ & $+9\%$  \\
$20$  & $-0.6\%$ & --- \\
$50$  & $+0.3\%$ & $+1.4\%$ \\
\bottomrule
\end{tabular}
\end{table}

\Cref{tab:switch-scaling-dual} reports the trend across sizes: the method runs at every size, the dual beats Algorithm~2 through $N\approx20$, and the margin shrinks steadily to parity by $N=50$. The cause is not that the dual degrades but that the scheduling problem becomes easy. At a fixed heavy load a larger switch keeps proportionally more queues nonempty in steady state---at $N=10$, $\rho=0.85$, a large majority of the $100$ queues hold cells at any instant. When that many cells are waiting, a perfect matching among the nonempty queues almost always exists, so maximizing throughput is automatic, any maximal matching achieving it, and the only decision remaining is which cost-weighted cells to favor among the many throughput-maximal matchings. That cost-priority decision is exactly what Algorithm~2 makes, and makes well, so little room remains for the dual. The direct evidence is the last column: cost-weighted MaxWeight, whose only content is cost priority, sits $9\%$ above Algorithm~2 at $N=10$ but only $1.4\%$ above at $N=50$, so that, as the switch grows, all reasonable cost-aware policies converge toward the same near-optimal schedule. The large advantage at $N=3$ came from the opposite regime, where a throughput-maximal matching is genuinely constrained and the dual's lookahead over which cells to serve is what pays.

Two considerations qualify this reading. That the advantage at moderate $N$ is genuinely the dual's, rather than an artifact of the randomized control, is confirmed by a control experiment: sampling matchings from the softened Algorithm~2 reference alone, with the dual term removed, is $1.9\%\pm0.04\%$ worse than deterministic Algorithm~2---randomizing the tie-break carries a penalty---and the dual overcomes this penalty and still finishes ahead, so the improvement at $N=10$ is attributable to the value function. And the large-$N$ solve is a faithful but not identical version of the $N=3$ one, since the derivative score is a first-order stand-in for the exact difference in~\eqref{eq:switch-greedy} and Gumbel perturb-and-MAP reproduces the entropic coupling only approximately; the residual difference at $N=50$ reflects these approximations acting in a regime already at near-parity, where little separates any competent policy from Algorithm~2. Neither consideration changes the conclusion, which is as much about the problem as about the method: the input-queued switch under heavy load is a regime in which the strong combinatorial heuristic is already near-optimal, and the value of a learned dual is concentrated where the matching is genuinely constrained rather than where throughput is free.

\section{Discussion and Conclusion}\label{sec:conclusion}

{This paper reformulates long-run average-cost control as a finite-horizon BAR-SOT problem that jointly optimizes the controlled marginal flow and its common endpoint distribution. When the average-cost Poisson equation admits a relative value solution and the induced optimal control satisfies the required recurrence and integrability conditions, the BAR-SOT value is \(Tg^{\ast}\) for every horizon \(T>0\). An optimal dual potential consists of the relative value function and a linear time term with slope \(g^{\ast}\), producing an endpoint gap of \(Tg^{\ast}\). The optimality system comprises a backward HJB equation and its induced control, a forward Kolmogorov equation, and a periodic boundary condition on the marginal flow. The dual endpoint gap identifies the average-cost rate, while the BAR-SOT optimum is attained by a stationary flow whose common endpoint law is the invariant distribution of the induced optimal control. The finite-horizon formulation does not eliminate the nonlinear eigenvalue problem of average-cost control but reformulates it as a periodic self-transport problem.

For computation, entropy regularization replaces the hard Hamiltonian with a smooth soft-Hamiltonian and induces a state-dependent Gibbs kernel. For the fixed-endpoint inner problem, it also yields a Sinkhorn-type endpoint-scaling relation, whose convergence is established when the endpoint-conditioned tilted dynamics remain admissible. The numerical experiments approximate the periodic soft dual directly, without using this endpoint-scaling iteration, and use the controlled dynamics and model structure to construct state-dependent policies. In a reentrant network example, the workload parametrization induced by the workload matrix $M$ of~\eqref{eq:workload-matrix} separates a principal workload-dependent term from corrections within each workload level set. In the large input-queued switch implementation, port workloads and bipartite symmetry provide the corresponding representation. The resulting policies are competitive with or outperform the selected benchmarks across several tested finite-load configurations without requiring a representation of the entire unbounded state space.

In experiments on input-queued switches, positive entropy also avoids the empirical instability observed when \(\varepsilon\) is set to zero. Near critical loading, the reentrant-network estimates become more sensitive to initialization and stochastic training variability, so the comparisons should be interpreted as finite-load results. Within this scope, the experiments show that incorporating network structure into the BAR-SOT dual approximation can produce effective state-dependent control.

The broader implication of BAR-SOT extends beyond reentrant networks and queueing. Given the controlled dynamics and a structural representation such as the workload matrix \(M\), BAR-SOT learns a periodic dual potential and uses the controlled generator to construct an action rule. The specific dynamics and representation can vary by application, but the fundamental learning problem itself is unchanged. A natural next step is developing the ability to learn across workload maps and models rather than solving each system separately. A common architecture that takes the model itself as an input, namely its transition structure, costs, and admissible actions, could learn how the dual potential and control change across dimensions, network topologies, and problem classes. This would extend BAR-SOT from learning control within one model to learning across families of models.

Another potential direction for further research is boundary control, in which control is applied when the state reaches the boundary. The accumulated boundary intervention can be represented through local time, and the primal BAR would include an occupation measure defined with respect to this local time. On the dual side, the marginal value of applying the boundary control cannot exceed its marginal cost, producing a gradient constraint. The central question is whether the primal optimum and the state-independent endpoint gap of the dual potential still equal \(Tg^{\ast}\), with the state-dependent component of the dual potential recovering the relative value function and the endpoint gap per unit time recovering the average-cost rate \(g^{\ast}\).}

\section*{Acknowledgements}

SS and HH are partly supported through  Office of Naval Research (ONR) grant FA9550-24-1-0210. HH also acknowledges support from the National Science Foundation (NSF) through grant CAREER/2143752.

\section*{AI Statement}
The authors used generative AI tools to assist with language refinement and aspects of code development. The research questions, theoretical ideas, methodology, analysis, and conclusions are the authors' own. All AI-assisted content was reviewed and verified by the authors.

\newpage

\bibliographystyle{plainnat}
\bibliography{references}

\end{document}